\theoremstyle{plain}
\newtheorem{proposition}{Proposition}[section]
\newtheorem{theorem}[proposition]{Theorem}
\newtheorem{lemma}[proposition]{Lemma}
\newtheorem{corollary}[proposition]{Corollary}
\newtheorem{remark}[proposition]{Remark}
\theoremstyle{definition}
\newtheorem*{remark*}{Remark}
\newcommand{\N}{\mathbb{N}}
\newcommand{\R}{\mathbb{R}}
\newcommand{\Z}{\mathbb{Z}}
\newcommand{\E}{\mathbb{E}}
\newcommand{\1}{1}
\newcommand{\tr}{{\rm tr}}
\newcommand{\cR}{{\mathcal R}}
\renewcommand{\Pr}{\mathbb{P}}
\newcommand{\WM}{\mathscr{W}}
\newcommand{\block}{\mathsf{sblock}}
\newcommand{\wlim}{\mathop{\text{\rm w-lim}}}
\begin{document}

\title{A quenched large deviation principle in a continuous scenario} 

\date{9th~December~2013}

\author{
\renewcommand{\thefootnote}{\arabic{footnote}}
M.\ Birkner
\footnotemark[1]
\\
\renewcommand{\thefootnote}{\arabic{footnote}}
F.\ den Hollander
\footnotemark[2]
}

\footnotetext[1]{
Institut f\"{u}r Mathematik, Johannes-Gutenberg-Universit\"at,
Staudingerweg 9, 55099 Mainz, Germany,\\ 
{\sl birkner@mathematik.uni-mainz.de}
}
\footnotetext[2]{
Mathematical Institute, Leiden University, P.O.\ Box 9512,
2300 RA Leiden, The Netherlands,\\
{\sl denholla@math.leidenuniv.nl}
}

\maketitle

\begin{abstract} 
We prove the analogue for continuous space-time of the quenched LDP
derived in Birkner, Greven and den Hollander~\cite{BiGrdHo10} for
discrete space-time. In particular, we consider a random environment
given by Brownian increments, cut into pieces according to an
independent continuous-time renewal process. We look at the empirical
process obtained by recording both the length of and the increments in 
the successive pieces. For the case where the renewal time distribution 
has a Lebesgue density with a polynomial tail, we derive the quenched 
LDP for the empirical process, i.e., the LDP conditional on a typical 
environment. The rate function is a sum of two specific relative entropies,
one for the pieces and one for the concatenation of the pieces. 
We also obtain a quenched LDP when the tail decays faster than algebraic. 
The proof uses coarse-graining and truncation arguments, 
involving various approximations of specific relative entropies that are
not quite standard. 

In a companion paper we show how the quenched LDP 
and the techniques developed in the present paper can be applied to obtain 
a variational characterisation of the free energy and the phase transition line 
for the Brownian copolymer near a selective interface.

\medskip\noindent
\emph{MSC2010:} 60F10, 60G10, 60J65, 60K37.\\
\emph{Keywords:} Brownian environment, renewal process, annealed vs.\ quenched, 
empirical process, large deviation principle, specific relative entropy.\\
\emph{Acknowledgment:} The research in this paper is supported by ERC Advanced Grant
267356 VARIS of FdH. MB is grateful for hospitality at the Mathematical Institute in Leiden 
during a sabbatical leave from September 2012 until February 2013, supported by ERC.
\end{abstract}


\section{Introduction and main result}
\label{intro}

When we cut an i.i.d.\ sequence of letters into words according to an independent
integer-valued renewal process, we obtain an i.i.d.\ sequence of words. In the 
\emph{annealed} LDP for the empirical process of words, the rate function is the 
specific relative entropy of the observed law of words w.r.t.\ the reference law 
of words. Birkner, Greven and den Hollander~\cite{BiGrdHo10} considered the 
\emph{quenched} LDP, i.e., conditional on a typical letter sequence. The rate 
function of the quenched LDP turned out to be a sum of two terms, one being the 
annealed rate function, the other being proportional to the specific relative 
entropy of the observed law of letters w.r.t.\ the reference law of letters, with 
the former being obtained by concatenating the words and randomising the location 
of the origin. The proportionality constant equals the tail exponent of the renewal 
time distribution.
     
The goal of the present paper is to derive the analogue of the quenched LDP for
the case where the i.i.d.\ sequence of letters is replaced by the process of
Brownian increments, and the renewal process has a length distribution with a 
Lebesgue density that has a polynomial tail. 

In Section~\ref{setting} we define the continuous space-time setting, in 
Section~\ref{LDPs} we state both the annealed and the quenched LDP, 
while in Section~\ref{disc} we discuss these LDPs and indicate some further 
extensions. In Section~\ref{proof} we prove the quenched LDP subject to 
three propositions. In Sections~\ref{props}--\ref{removeass} we give the 
proof of these propositions. In Section~\ref{proofalpha1infty} we prove the
extensions. Appendix~\ref{metrics} recalls a few basic facts about metrics 
on path space, while Appendices~\ref{entropy}--\ref{contrelentr} prove a 
few basic facts about specific relative entropy that are needed in the proof 
and that are not quite standard.
 

\subsection{Continuous space-time}
\label{setting}

Let $X=(X_t)_{t \geq 0}$ be the standard one-dimensional Brownian motion starting 
from $X_0=0$. Let $\WM$ denote its law on path space: the Wiener measure on 
$C([0,\infty))$, equipped with the $\sigma$-algebra generated by the coordinate 
projections. Let $T=(T_i)_{i \in \N_0}$ ($T_0=0$) be an independent continuous-time 
renewal process, with interarrival times $\tau_i=T_i-T_{i-1}$, $i\in\N$, whose 
common law $\rho=\mathscr{L}(\tau_1)$ is absolutely continuous with respect to 
the Lebesgue measure on $(0,\infty)$, with density $\bar{\rho}$ satisfying 
\begin{equation} 
\label{ass:rhodensdecay} 
\lim_{x\to\infty} \frac{\log\bar{\rho}(x)}{\log x} = - \alpha,
\qquad \alpha \in (1,\infty). 
\end{equation} 
In addition, assume that 
\begin{equation} 
\label{ass:rhobar.reg0} 
\begin{minipage}{0.85\textwidth}
$\mathrm{supp}(\rho) = [s_*,\infty)$ with $0 \leq s_* < \infty$, and $\bar{\rho}$ is 
continuous and strictly positive on $(s_*,\infty)$, and varies regularly near $s_*$.
\end{minipage}
\end{equation}

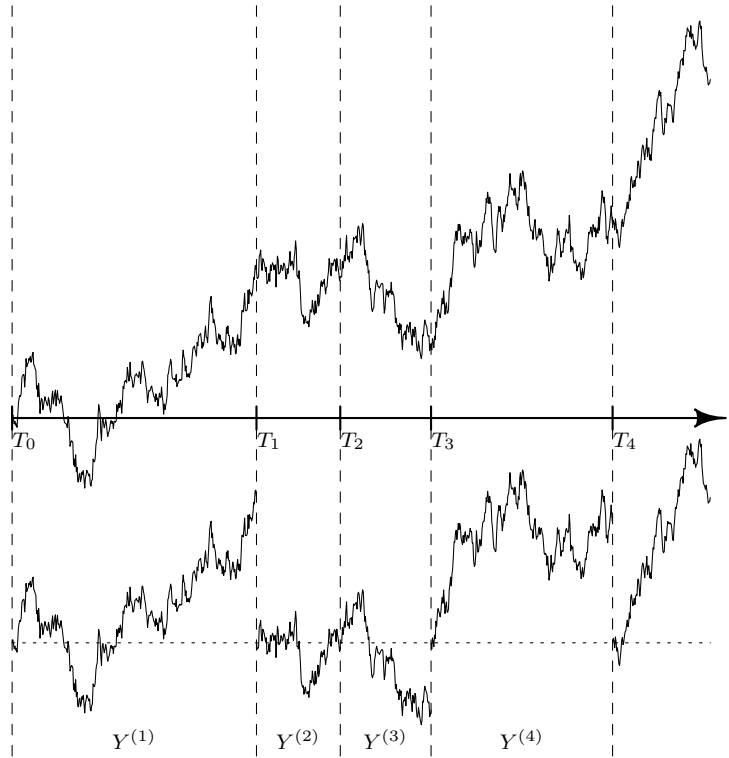
\begin{figure}[htbp]
\begin{center}
\begin{tikzpicture}[x=0.8pt,y=0.8pt]
\definecolor[named]{fillColor}{rgb}{1.00,1.00,1.00}
\path[use as bounding box,fill=fillColor,fill opacity=0.00] (0,0) rectangle (361.35,361.35);
\begin{scope}
\path[clip] (  2.40,  2.40) rectangle (358.95,358.95);
\definecolor[named]{drawColor}{rgb}{0.00,0.00,0.00}

\path[draw=drawColor,line width= 0.8pt,line join=round,line cap=round] ( 15.61,162.33) -- (345.74,162.33);
\definecolor[named]{fillColor}{rgb}{0.00,0.00,0.00}

\path[draw=drawColor,line width= 0.8pt,line join=round,line cap=round,fill=fillColor] (338.42,166.36) --
	(341.56,164.65) --
	(345.14,163.38) --
	(349.03,162.60) --
	(353.07,162.33) --
	(353.07,162.33) --
	(349.03,162.07) --
	(345.14,161.29) --
	(341.56,160.02) --
	(338.42,158.31) --
	(338.42,158.31) --
	(338.76,158.41) --
	(339.08,158.71) --
	(339.37,159.19) --
	(339.61,159.82) --
	(339.79,160.59) --
	(339.90,161.44) --
	(339.94,162.33) --
	(339.90,163.23) --
	(339.79,164.08) --
	(339.61,164.84) --
	(339.37,165.48) --
	(339.08,165.96) --
	(338.76,166.26) --
	(338.42,166.36) --
	cycle;

\path[draw=drawColor,line width= 0.4pt,line join=round,line cap=round] ( 15.61,162.33) --
	( 15.94,163.78) --
	( 16.27,161.59) --
	( 16.60,159.54) --
	( 16.93,160.84) --
	( 17.26,158.90) --
	( 17.59,159.60) --
	( 17.92,157.91) --
	( 18.25,162.99) --
	( 18.58,169.37) --
	( 18.91,171.53) --
	( 19.24,172.96) --
	( 19.57,172.49) --
	( 19.90,174.03) --
	( 20.23,181.44) --
	( 20.56,182.39) --
	( 20.89,182.86) --
	( 21.22,182.87) --
	( 21.55,176.96) --
	( 21.88,181.97) --
	( 22.21,190.43) --
	( 22.54,186.60) --
	( 22.87,187.90) --
	( 23.20,188.38) --
	( 23.53,190.46) --
	( 23.86,189.69) --
	( 24.19,191.41) --
	( 24.52,188.72) --
	( 24.85,189.63) --
	( 25.18,192.42) --
	( 25.51,193.43) --
	( 25.84,187.48) --
	( 26.17,187.08) --
	( 26.50,185.68) --
	( 26.83,184.93) --
	( 27.16,176.03) --
	( 27.49,177.69) --
	( 27.82,172.43) --
	( 28.15,178.64) --
	( 28.48,173.22) --
	( 28.81,164.52) --
	( 29.14,169.59) --
	( 29.47,171.64) --
	( 29.80,171.10) --
	( 30.13,166.26) --
	( 30.46,168.12) --
	( 30.79,170.35) --
	( 31.12,172.40) --
	( 31.45,170.64) --
	( 31.78,170.16) --
	( 32.11,168.50) --
	( 32.44,168.21) --
	( 32.77,171.97) --
	( 33.10,170.81) --
	( 33.43,167.06) --
	( 33.76,165.03) --
	( 34.09,167.83) --
	( 34.42,172.37) --
	( 34.75,175.09) --
	( 35.08,168.99) --
	( 35.41,171.08) --
	( 35.74,173.81) --
	( 36.07,175.59) --
	( 36.40,168.07) --
	( 36.73,172.84) --
	( 37.06,175.36) --
	( 37.39,171.05) --
	( 37.72,171.36) --
	( 38.05,169.27) --
	( 38.39,170.48) --
	( 38.72,172.81) --
	( 39.05,170.21) --
	( 39.38,168.26) --
	( 39.71,165.13) --
	( 40.04,162.23) --
	( 40.37,160.18) --
	( 40.70,159.97) --
	( 41.03,159.69) --
	( 41.36,159.26) --
	( 41.69,157.81) --
	( 42.02,163.04) --
	( 42.35,161.43) --
	( 42.68,156.62) --
	( 43.01,148.97) --
	( 43.34,150.66) --
	( 43.67,151.58) --
	( 44.00,151.35) --
	( 44.33,150.49) --
	( 44.66,140.15) --
	( 44.99,142.98) --
	( 45.32,139.32) --
	( 45.65,144.90) --
	( 45.98,136.91) --
	( 46.31,138.99) --
	( 46.64,132.30) --
	( 46.97,132.08) --
	( 47.30,135.59) --
	( 47.63,136.14) --
	( 47.96,132.33) --
	( 48.29,133.80) --
	( 48.62,136.85) --
	( 48.95,135.64) --
	( 49.28,137.39) --
	( 49.61,133.43) --
	( 49.94,129.17) --
	( 50.27,133.93) --
	( 50.60,135.33) --
	( 50.93,137.62) --
	( 51.26,136.47) --
	( 51.59,136.07) --
	( 51.92,134.90) --
	( 52.25,135.79) --
	( 52.58,130.82) --
	( 52.91,130.10) --
	( 53.24,132.73) --
	( 53.57,133.36) --
	( 53.90,142.93) --
	( 54.23,141.95) --
	( 54.56,141.64) --
	( 54.89,143.25) --
	( 55.22,149.41) --
	( 55.55,151.74) --
	( 55.88,153.06) --
	( 56.21,162.66) --
	( 56.54,168.64) --
	( 56.87,166.72) --
	( 57.20,164.37) --
	( 57.53,155.91) --
	( 57.86,151.62) --
	( 58.19,160.67) --
	( 58.52,162.00) --
	( 58.85,159.70) --
	( 59.18,157.72) --
	( 59.51,159.15) --
	( 59.84,161.56) --
	( 60.17,159.91) --
	( 60.50,156.12) --
	( 60.83,153.61) --
	( 61.16,155.32) --
	( 61.49,156.12) --
	( 61.82,158.02) --
	( 62.16,160.85) --
	( 62.49,161.11) --
	( 62.82,159.73) --
	( 63.15,160.63) --
	( 63.48,157.34) --
	( 63.81,160.60) --
	( 64.14,160.85) --
	( 64.47,160.34) --
	( 64.80,160.90) --
	( 65.13,168.70) --
	( 65.46,165.38) --
	( 65.79,166.83) --
	( 66.12,169.33) --
	( 66.45,167.26) --
	( 66.78,170.76) --
	( 67.11,168.82) --
	( 67.44,164.58) --
	( 67.77,170.12) --
	( 68.10,177.37) --
	( 68.43,176.15) --
	( 68.76,179.62) --
	( 69.09,179.71) --
	( 69.42,180.19) --
	( 69.75,178.91) --
	( 70.08,186.32) --
	( 70.41,183.84) --
	( 70.74,179.87) --
	( 71.07,180.00) --
	( 71.40,188.72) --
	( 71.73,182.11) --
	( 72.06,181.57) --
	( 72.39,180.12) --
	( 72.72,176.85) --
	( 73.05,177.30) --
	( 73.38,179.62) --
	( 73.71,182.55) --
	( 74.04,182.70) --
	( 74.37,183.30) --
	( 74.70,182.27) --
	( 75.03,182.22) --
	( 75.36,181.50) --
	( 75.69,184.22) --
	( 76.02,183.65) --
	( 76.35,178.58) --
	( 76.68,182.69) --
	( 77.01,188.94) --
	( 77.34,189.21) --
	( 77.67,186.79) --
	( 78.00,185.20) --
	( 78.33,182.63) --
	( 78.66,180.80) --
	( 78.99,177.16) --
	( 79.32,175.06) --
	( 79.65,169.62) --
	( 79.98,172.75) --
	( 80.31,175.36) --
	( 80.64,175.16) --
	( 80.97,175.56) --
	( 81.30,177.05) --
	( 81.63,177.49) --
	( 81.96,175.25) --
	( 82.29,174.08) --
	( 82.62,172.44) --
	( 82.95,172.27) --
	( 83.28,170.18) --
	( 83.61,173.79) --
	( 83.94,170.97) --
	( 84.27,171.84) --
	( 84.60,168.01) --
	( 84.93,171.34) --
	( 85.26,174.13) --
	( 85.59,172.13) --
	( 85.93,172.15) --
	( 86.26,177.04) --
	( 86.59,172.69) --
	( 86.92,170.13) --
	( 87.25,164.17) --
	( 87.58,168.98) --
	( 87.91,166.36) --
	( 88.24,168.31) --
	( 88.57,170.04) --
	( 88.90,172.97) --
	( 89.23,182.76) --
	( 89.56,183.62) --
	( 89.89,186.49) --
	( 90.22,187.63) --
	( 90.55,190.17) --
	( 90.88,188.71) --
	( 91.21,182.24) --
	( 91.54,180.87) --
	( 91.87,180.77) --
	( 92.20,180.59) --
	( 92.53,181.03) --
	( 92.86,179.60) --
	( 93.19,182.92) --
	( 93.52,177.21) --
	( 93.85,178.44) --
	( 94.18,179.21) --
	( 94.51,179.97) --
	( 94.84,180.61) --
	( 95.17,180.12) --
	( 95.50,180.53) --
	( 95.83,180.34) --
	( 96.16,190.58) --
	( 96.49,194.48) --
	( 96.82,191.22) --
	( 97.15,190.34) --
	( 97.48,184.60) --
	( 97.81,186.95) --
	( 98.14,181.80) --
	( 98.47,183.21) --
	( 98.80,181.83) --
	( 99.13,183.31) --
	( 99.46,181.57) --
	( 99.79,183.81) --
	(100.12,183.91) --
	(100.45,187.30) --
	(100.78,191.76) --
	(101.11,195.42) --
	(101.44,195.63) --
	(101.77,196.44) --
	(102.10,194.73) --
	(102.43,193.23) --
	(102.76,191.60) --
	(103.09,198.12) --
	(103.42,196.52) --
	(103.75,200.69) --
	(104.08,201.54) --
	(104.41,201.90) --
	(104.74,202.27) --
	(105.07,199.36) --
	(105.40,195.38) --
	(105.73,198.02) --
	(106.06,196.32) --
	(106.39,198.78) --
	(106.72,202.74) --
	(107.05,205.89) --
	(107.38,200.59) --
	(107.71,200.39) --
	(108.04,197.96) --
	(108.37,203.79) --
	(108.70,209.82) --
	(109.03,216.18) --
	(109.36,218.39) --
	(109.70,219.87) --
	(110.03,212.15) --
	(110.36,214.22) --
	(110.69,207.46) --
	(111.02,208.96) --
	(111.35,207.20) --
	(111.68,205.55) --
	(112.01,201.79) --
	(112.34,197.57) --
	(112.67,197.28) --
	(113.00,199.80) --
	(113.33,202.19) --
	(113.66,199.62) --
	(113.99,201.48) --
	(114.32,198.95) --
	(114.65,198.88) --
	(114.98,195.00) --
	(115.31,198.39) --
	(115.64,194.21) --
	(115.97,196.46) --
	(116.30,195.87) --
	(116.63,198.92) --
	(116.96,205.32) --
	(117.29,202.39) --
	(117.62,205.91) --
	(117.95,201.42) --
	(118.28,201.44) --
	(118.61,197.01) --
	(118.94,192.33) --
	(119.27,196.29) --
	(119.60,197.02) --
	(119.93,200.61) --
	(120.26,199.05) --
	(120.59,195.86) --
	(120.92,197.09) --
	(121.25,192.65) --
	(121.58,193.14) --
	(121.91,199.03) --
	(122.24,201.72) --
	(122.57,196.94) --
	(122.90,199.67) --
	(123.23,196.29) --
	(123.56,196.94) --
	(123.89,204.30) --
	(124.22,213.13) --
	(124.55,213.25) --
	(124.88,212.46) --
	(125.21,216.19) --
	(125.54,221.37) --
	(125.87,213.17) --
	(126.20,213.06) --
	(126.53,213.21) --
	(126.86,214.20) --
	(127.19,222.84) --
	(127.52,216.51) --
	(127.85,223.10) --
	(128.18,221.10) --
	(128.51,221.54) --
	(128.84,220.01) --
	(129.17,221.11) --
	(129.50,223.96) --
	(129.83,228.91) --
	(130.16,227.96) --
	(130.49,234.23) --
	(130.82,234.61) --
	(131.15,231.28) --
	(131.48,228.32) --
	(131.81,228.57) --
	(132.14,229.23) --
	(132.47,239.02) --
	(132.80,239.74) --
	(133.13,241.87) --
	(133.47,234.42) --
	(133.80,232.14) --
	(134.13,237.75) --
	(134.46,237.07) --
	(134.79,235.75) --
	(135.12,233.85) --
	(135.45,231.74) --
	(135.78,233.66) --
	(136.11,225.93) --
	(136.44,226.73) --
	(136.77,227.52) --
	(137.10,230.76) --
	(137.43,227.00) --
	(137.76,234.79) --
	(138.09,232.12) --
	(138.42,234.66) --
	(138.75,235.80) --
	(139.08,234.71) --
	(139.41,232.44) --
	(139.74,234.44) --
	(140.07,234.81) --
	(140.40,224.35) --
	(140.73,232.53) --
	(141.06,238.96) --
	(141.39,237.36) --
	(141.72,230.46) --
	(142.05,230.84) --
	(142.38,231.28) --
	(142.71,234.70) --
	(143.04,234.30) --
	(143.37,232.08) --
	(143.70,232.70) --
	(144.03,235.58) --
	(144.36,235.07) --
	(144.69,233.43) --
	(145.02,228.12) --
	(145.35,228.37) --
	(145.68,232.98) --
	(146.01,230.77) --
	(146.34,233.58) --
	(146.67,236.81) --
	(147.00,241.11) --
	(147.33,234.45) --
	(147.66,235.57) --
	(147.99,236.56) --
	(148.32,235.01) --
	(148.65,232.37) --
	(148.98,237.98) --
	(149.31,241.02) --
	(149.64,246.15) --
	(149.97,242.29) --
	(150.30,229.52) --
	(150.63,229.61) --
	(150.96,228.39) --
	(151.29,232.02) --
	(151.62,227.27) --
	(151.95,221.02) --
	(152.28,213.36) --
	(152.61,211.49) --
	(152.94,210.32) --
	(153.27,210.94) --
	(153.60,206.97) --
	(153.93,209.02) --
	(154.26,210.39) --
	(154.59,207.60) --
	(154.92,207.49) --
	(155.25,207.10) --
	(155.58,208.32) --
	(155.91,205.47) --
	(156.24,208.75) --
	(156.57,213.58) --
	(156.90,212.61) --
	(157.24,212.47) --
	(157.57,217.98) --
	(157.90,214.83) --
	(158.23,212.02) --
	(158.56,211.23) --
	(158.89,217.78) --
	(159.22,211.94) --
	(159.55,214.47) --
	(159.88,216.76) --
	(160.21,220.74) --
	(160.54,215.35) --
	(160.87,217.00) --
	(161.20,220.68) --
	(161.53,226.07) --
	(161.86,227.25) --
	(162.19,224.58) --
	(162.52,226.61) --
	(162.85,225.82) --
	(163.18,227.91) --
	(163.51,224.23) --
	(163.84,221.02) --
	(164.17,223.97) --
	(164.50,224.40) --
	(164.83,228.81) --
	(165.16,231.43) --
	(165.49,237.76) --
	(165.82,235.92) --
	(166.15,230.46) --
	(166.48,229.64) --
	(166.81,235.66) --
	(167.14,234.52) --
	(167.47,234.84) --
	(167.80,236.71) --
	(168.13,237.52) --
	(168.46,236.74) --
	(168.79,231.86) --
	(169.12,237.77) --
	(169.45,235.00) --
	(169.78,233.38) --
	(170.11,230.54) --
	(170.44,227.22) --
	(170.77,229.26) --
	(171.10,229.10) --
	(171.43,229.16) --
	(171.76,236.46) --
	(172.09,233.20) --
	(172.42,236.14) --
	(172.75,239.23) --
	(173.08,236.66) --
	(173.41,244.17) --
	(173.74,248.64) --
	(174.07,239.55) --
	(174.40,237.44) --
	(174.73,238.78) --
	(175.06,236.14) --
	(175.39,240.74) --
	(175.72,240.68) --
	(176.05,241.19) --
	(176.38,237.89) --
	(176.71,242.98) --
	(177.04,247.72) --
	(177.37,246.98) --
	(177.70,247.28) --
	(178.03,248.02) --
	(178.36,248.41) --
	(178.69,245.31) --
	(179.02,252.07) --
	(179.35,253.23) --
	(179.68,252.12) --
	(180.01,248.55) --
	(180.34,244.15) --
	(180.67,246.85) --
	(181.01,251.95) --
	(181.34,254.35) --
	(181.67,253.16) --
	(182.00,245.95) --
	(182.33,239.46) --
	(182.66,241.70) --
	(182.99,236.71) --
	(183.32,234.62) --
	(183.65,236.87) --
	(183.98,230.99) --
	(184.31,232.04) --
	(184.64,223.22) --
	(184.97,223.69) --
	(185.30,219.03) --
	(185.63,214.00) --
	(185.96,212.67) --
	(186.29,213.98) --
	(186.62,214.49) --
	(186.95,221.15) --
	(187.28,222.66) --
	(187.61,225.91) --
	(187.94,227.62) --
	(188.27,226.36) --
	(188.60,224.53) --
	(188.93,224.52) --
	(189.26,223.79) --
	(189.59,224.46) --
	(189.92,225.43) --
	(190.25,225.91) --
	(190.58,226.08) --
	(190.91,225.43) --
	(191.24,218.70) --
	(191.57,211.95) --
	(191.90,213.77) --
	(192.23,218.90) --
	(192.56,223.07) --
	(192.89,220.63) --
	(193.22,222.21) --
	(193.55,219.58) --
	(193.88,222.59) --
	(194.21,221.76) --
	(194.54,221.55) --
	(194.87,226.57) --
	(195.20,220.80) --
	(195.53,222.11) --
	(195.86,221.04) --
	(196.19,219.93) --
	(196.52,215.40) --
	(196.85,207.54) --
	(197.18,209.61) --
	(197.51,206.69) --
	(197.84,205.55) --
	(198.17,206.15) --
	(198.50,202.37) --
	(198.83,207.16) --
	(199.16,204.30) --
	(199.49,206.52) --
	(199.82,199.35) --
	(200.15,194.79) --
	(200.48,200.97) --
	(200.81,202.15) --
	(201.14,201.79) --
	(201.47,203.54) --
	(201.80,199.50) --
	(202.13,205.55) --
	(202.46,206.47) --
	(202.79,205.18) --
	(203.12,202.96) --
	(203.45,206.30) --
	(203.78,207.63) --
	(204.11,202.62) --
	(204.44,200.70) --
	(204.78,196.79) --
	(205.11,204.41) --
	(205.44,201.80) --
	(205.77,200.74) --
	(206.10,205.37) --
	(206.43,204.00) --
	(206.76,197.62) --
	(207.09,194.13) --
	(207.42,194.06) --
	(207.75,198.72) --
	(208.08,196.40) --
	(208.41,193.98) --
	(208.74,192.00) --
	(209.07,190.51) --
	(209.40,192.45) --
	(209.73,201.69) --
	(210.06,201.17) --
	(210.39,205.64) --
	(210.72,208.75) --
	(211.05,207.28) --
	(211.38,206.20) --
	(211.71,205.38) --
	(212.04,201.88) --
	(212.37,200.89) --
	(212.70,201.29) --
	(213.03,194.23) --
	(213.36,195.33) --
	(213.69,197.48) --
	(214.02,194.52) --
	(214.35,199.85) --
	(214.68,198.18) --
	(215.01,195.67) --
	(215.34,201.15) --
	(215.67,199.51) --
	(216.00,207.69) --
	(216.33,205.32) --
	(216.66,210.90) --
	(216.99,213.68) --
	(217.32,212.47) --
	(217.65,211.42) --
	(217.98,209.79) --
	(218.31,211.95) --
	(218.64,216.32) --
	(218.97,218.96) --
	(219.30,216.24) --
	(219.63,222.50) --
	(219.96,217.14) --
	(220.29,211.39) --
	(220.62,214.94) --
	(220.95,216.84) --
	(221.28,211.99) --
	(221.61,212.00) --
	(221.94,213.57) --
	(222.27,222.54) --
	(222.60,224.81) --
	(222.93,228.58) --
	(223.26,224.40) --
	(223.59,230.01) --
	(223.92,232.62) --
	(224.25,238.72) --
	(224.58,245.31) --
	(224.91,245.21) --
	(225.24,249.23) --
	(225.57,250.78) --
	(225.90,246.38) --
	(226.23,249.82) --
	(226.56,253.98) --
	(226.89,251.08) --
	(227.22,253.96) --
	(227.55,252.86) --
	(227.88,247.13) --
	(228.21,245.41) --
	(228.55,243.95) --
	(228.88,248.27) --
	(229.21,244.80) --
	(229.54,248.41) --
	(229.87,250.08) --
	(230.20,246.72) --
	(230.53,245.55) --
	(230.86,248.37) --
	(231.19,248.10) --
	(231.52,247.01) --
	(231.85,248.99) --
	(232.18,251.64) --
	(232.51,250.12) --
	(232.84,250.16) --
	(233.17,244.70) --
	(233.50,244.72) --
	(233.83,236.17) --
	(234.16,233.66) --
	(234.49,234.23) --
	(234.82,239.14) --
	(235.15,250.48) --
	(235.48,245.34) --
	(235.81,240.36) --
	(236.14,240.69) --
	(236.47,243.92) --
	(236.80,246.40) --
	(237.13,243.28) --
	(237.46,244.01) --
	(237.79,251.20) --
	(238.12,254.25) --
	(238.45,255.04) --
	(238.78,255.00) --
	(239.11,259.04) --
	(239.44,259.14) --
	(239.77,266.13) --
	(240.10,264.21) --
	(240.43,269.21) --
	(240.76,266.62) --
	(241.09,263.77) --
	(241.42,266.82) --
	(241.75,264.14) --
	(242.08,264.15) --
	(242.41,254.93) --
	(242.74,251.71) --
	(243.07,246.16) --
	(243.40,242.13) --
	(243.73,241.97) --
	(244.06,240.62) --
	(244.39,241.02) --
	(244.72,249.78) --
	(245.05,256.52) --
	(245.38,258.86) --
	(245.71,262.44) --
	(246.04,260.04) --
	(246.37,262.31) --
	(246.70,257.04) --
	(247.03,256.15) --
	(247.36,250.51) --
	(247.69,253.00) --
	(248.02,258.47) --
	(248.35,255.29) --
	(248.68,259.00) --
	(249.01,261.94) --
	(249.34,263.23) --
	(249.67,262.69) --
	(250.00,267.16) --
	(250.33,267.94) --
	(250.66,271.27) --
	(250.99,278.01) --
	(251.32,273.25) --
	(251.65,270.81) --
	(251.98,270.29) --
	(252.32,270.85) --
	(252.65,269.61) --
	(252.98,267.66) --
	(253.31,271.13) --
	(253.64,269.60) --
	(253.97,263.28) --
	(254.30,263.28) --
	(254.63,260.89) --
	(254.96,266.78) --
	(255.29,265.33) --
	(255.62,275.53) --
	(255.95,277.96) --
	(256.28,272.13) --
	(256.61,275.83) --
	(256.94,279.30) --
	(257.27,278.67) --
	(257.60,275.19) --
	(257.93,270.61) --
	(258.26,273.76) --
	(258.59,272.35) --
	(258.92,265.35) --
	(259.25,261.65) --
	(259.58,258.02) --
	(259.91,262.44) --
	(260.24,254.63) --
	(260.57,252.54) --
	(260.90,250.93) --
	(261.23,251.18) --
	(261.56,250.71) --
	(261.89,246.91) --
	(262.22,251.58) --
	(262.55,254.17) --
	(262.88,256.25) --
	(263.21,249.34) --
	(263.54,246.51) --
	(263.87,247.81) --
	(264.20,247.92) --
	(264.53,251.01) --
	(264.86,252.13) --
	(265.19,248.78) --
	(265.52,251.45) --
	(265.85,251.79) --
	(266.18,249.59) --
	(266.51,248.59) --
	(266.84,244.71) --
	(267.17,240.02) --
	(267.50,239.30) --
	(267.83,239.88) --
	(268.16,239.07) --
	(268.49,234.20) --
	(268.82,238.51) --
	(269.15,233.07) --
	(269.48,227.14) --
	(269.81,229.75) --
	(270.14,232.43) --
	(270.47,238.94) --
	(270.80,233.41) --
	(271.13,231.07) --
	(271.46,235.33) --
	(271.79,234.62) --
	(272.12,234.92) --
	(272.45,240.07) --
	(272.78,242.76) --
	(273.11,244.46) --
	(273.44,251.92) --
	(273.77,249.04) --
	(274.10,250.86) --
	(274.43,246.15) --
	(274.76,244.27) --
	(275.09,238.48) --
	(275.42,240.93) --
	(275.75,241.16) --
	(276.09,240.75) --
	(276.42,247.29) --
	(276.75,245.28) --
	(277.08,247.95) --
	(277.41,251.00) --
	(277.74,251.67) --
	(278.07,250.16) --
	(278.40,249.70) --
	(278.73,250.12) --
	(279.06,258.25) --
	(279.39,252.94) --
	(279.72,240.96) --
	(280.05,243.59) --
	(280.38,242.00) --
	(280.71,242.91) --
	(281.04,237.02) --
	(281.37,235.68) --
	(281.70,235.73) --
	(282.03,237.13) --
	(282.36,234.99) --
	(282.69,231.86) --
	(283.02,234.34) --
	(283.35,233.88) --
	(283.68,235.64) --
	(284.01,235.89) --
	(284.34,231.60) --
	(284.67,227.84) --
	(285.00,230.97) --
	(285.33,233.05) --
	(285.66,228.66) --
	(285.99,230.28) --
	(286.32,233.51) --
	(286.65,237.46) --
	(286.98,236.76) --
	(287.31,240.35) --
	(287.64,240.32) --
	(287.97,247.11) --
	(288.30,250.31) --
	(288.63,249.51) --
	(288.96,250.87) --
	(289.29,252.69) --
	(289.62,249.74) --
	(289.95,248.08) --
	(290.28,253.85) --
	(290.61,252.54) --
	(290.94,250.68) --
	(291.27,252.45) --
	(291.60,247.94) --
	(291.93,250.95) --
	(292.26,251.98) --
	(292.59,249.74) --
	(292.92,257.42) --
	(293.25,251.20) --
	(293.58,257.37) --
	(293.91,260.03) --
	(294.24,265.07) --
	(294.57,269.85) --
	(294.90,267.27) --
	(295.23,264.21) --
	(295.56,266.82) --
	(295.89,262.48) --
	(296.22,257.93) --
	(296.55,247.13) --
	(296.88,248.21) --
	(297.21,244.84) --
	(297.54,250.42) --
	(297.87,251.20) --
	(298.20,254.59) --
	(298.53,258.18) --
	(298.86,262.72) --
	(299.19,257.03) --
	(299.52,253.77) --
	(299.86,255.90) --
	(300.19,254.51) --
	(300.52,251.41) --
	(300.85,256.59) --
	(301.18,254.11) --
	(301.51,249.85) --
	(301.84,251.42) --
	(302.17,247.50) --
	(302.50,243.68) --
	(302.83,243.08) --
	(303.16,246.73) --
	(303.49,250.11) --
	(303.82,255.08) --
	(304.15,256.28) --
	(304.48,252.91) --
	(304.81,254.17) --
	(305.14,257.11) --
	(305.47,259.03) --
	(305.80,259.48) --
	(306.13,262.14) --
	(306.46,264.52) --
	(306.79,261.40) --
	(307.12,263.11) --
	(307.45,267.93) --
	(307.78,265.37) --
	(308.11,275.41) --
	(308.44,276.54) --
	(308.77,273.85) --
	(309.10,271.35) --
	(309.43,272.37) --
	(309.76,274.79) --
	(310.09,271.90) --
	(310.42,275.56) --
	(310.75,276.66) --
	(311.08,276.26) --
	(311.41,283.76) --
	(311.74,283.90) --
	(312.07,280.50) --
	(312.40,275.85) --
	(312.73,279.26) --
	(313.06,285.42) --
	(313.39,289.36) --
	(313.72,291.51) --
	(314.05,291.02) --
	(314.38,287.60) --
	(314.71,284.52) --
	(315.04,288.90) --
	(315.37,289.94) --
	(315.70,294.69) --
	(316.03,286.10) --
	(316.36,290.07) --
	(316.69,290.24) --
	(317.02,284.59) --
	(317.35,286.91) --
	(317.68,284.17) --
	(318.01,288.19) --
	(318.34,294.16) --
	(318.67,298.83) --
	(319.00,298.05) --
	(319.33,303.31) --
	(319.66,304.55) --
	(319.99,304.07) --
	(320.32,309.41) --
	(320.65,313.13) --
	(320.98,313.67) --
	(321.31,316.29) --
	(321.64,317.33) --
	(321.97,314.70) --
	(322.30,316.53) --
	(322.63,308.27) --
	(322.96,302.30) --
	(323.29,297.26) --
	(323.63,296.35) --
	(323.96,297.05) --
	(324.29,304.16) --
	(324.62,302.97) --
	(324.95,302.08) --
	(325.28,308.07) --
	(325.61,311.21) --
	(325.94,310.19) --
	(326.27,310.64) --
	(326.60,309.98) --
	(326.93,310.49) --
	(327.26,309.83) --
	(327.59,307.09) --
	(327.92,302.60) --
	(328.25,302.22) --
	(328.58,308.29) --
	(328.91,312.81) --
	(329.24,317.89) --
	(329.57,317.26) --
	(329.90,319.15) --
	(330.23,320.71) --
	(330.56,326.20) --
	(330.89,323.12) --
	(331.22,326.01) --
	(331.55,330.71) --
	(331.88,331.80) --
	(332.21,331.79) --
	(332.54,332.55) --
	(332.87,329.31) --
	(333.20,336.26) --
	(333.53,336.09) --
	(333.86,336.15) --
	(334.19,343.27) --
	(334.52,342.85) --
	(334.85,346.95) --
	(335.18,346.22) --
	(335.51,345.34) --
	(335.84,345.01) --
	(336.17,342.60) --
	(336.50,347.99) --
	(336.83,341.90) --
	(337.16,344.08) --
	(337.49,343.88) --
	(337.82,338.46) --
	(338.15,340.81) --
	(338.48,338.70) --
	(338.81,336.82) --
	(339.14,338.28) --
	(339.47,338.43) --
	(339.80,341.46) --
	(340.13,346.47) --
	(340.46,349.57) --
	(340.79,350.21) --
	(341.12,346.49) --
	(341.45,347.69) --
	(341.78,337.64) --
	(342.11,333.52) --
	(342.44,329.43) --
	(342.77,328.51) --
	(343.10,327.76) --
	(343.43,326.43) --
	(343.76,328.38) --
	(344.09,325.85) --
	(344.42,323.55) --
	(344.75,319.87) --
	(345.08,320.35) --
	(345.41,320.71) --
	(345.74,322.50);

\path[draw=drawColor,line width= 0.8pt,line join=round,line cap=round] ( 15.61,156.83) --
	( 15.61,167.84);

\node[text=drawColor,anchor=base,inner sep=0pt, outer sep=0pt, scale=  1.00] at ( 21.55,148.83) {$\scriptstyle T_0$};

\path[draw=drawColor,line width= 0.8pt,line join=round,line cap=round] (131.15,156.83) --
	(131.15,167.84);

\node[text=drawColor,anchor=base,inner sep=0pt, outer sep=0pt, scale=  1.00] at (137.10,148.83) {$\scriptstyle T_1$};

\path[draw=drawColor,line width= 0.8pt,line join=round,line cap=round] (170.77,156.83) --
	(170.77,167.84);

\node[text=drawColor,anchor=base,inner sep=0pt, outer sep=0pt, scale=  1.00] at (176.71,148.83) {$\scriptstyle T_2$};

\path[draw=drawColor,line width= 0.8pt,line join=round,line cap=round] (213.69,156.83) --
	(213.69,167.84);

\node[text=drawColor,anchor=base,inner sep=0pt, outer sep=0pt, scale=  1.00] at (219.63,148.83) {$\scriptstyle T_3$};

\path[draw=drawColor,line width= 0.8pt,line join=round,line cap=round] (299.52,156.83) --
	(299.52,167.84);

\node[text=drawColor,anchor=base,inner sep=0pt, outer sep=0pt, scale=  1.00] at (305.47,148.83) {$\scriptstyle T_4$};

\path[draw=drawColor,line width= 0.4pt,dash pattern=on 4pt off 4pt ,line join=round,line cap=round] ( 15.61,  2.40) -- ( 15.61,358.95);

\path[draw=drawColor,line width= 0.4pt,dash pattern=on 4pt off 4pt ,line join=round,line cap=round] (131.15,  2.40) -- (131.15,358.95);

\path[draw=drawColor,line width= 0.4pt,dash pattern=on 4pt off 4pt ,line join=round,line cap=round] (170.77,  2.40) -- (170.77,358.95);

\path[draw=drawColor,line width= 0.4pt,dash pattern=on 4pt off 4pt ,line join=round,line cap=round] (213.69,  2.40) -- (213.69,358.95);

\path[draw=drawColor,line width= 0.4pt,dash pattern=on 4pt off 4pt ,line join=round,line cap=round] (299.52,  2.40) -- (299.52,358.95);

\path[draw=drawColor,line width= 0.4pt,line join=round,line cap=round] ( 15.61, 55.96) --
	( 15.94, 57.40) --
	( 16.27, 55.21) --
	( 16.60, 53.16) --
	( 16.93, 54.46) --
	( 17.26, 52.52) --
	( 17.59, 53.22) --
	( 17.92, 51.53) --
	( 18.25, 56.61) --
	( 18.58, 62.99) --
	( 18.91, 65.15) --
	( 19.24, 66.58) --
	( 19.57, 66.11) --
	( 19.90, 67.65) --
	( 20.23, 75.06) --
	( 20.56, 76.02) --
	( 20.89, 76.48) --
	( 21.22, 76.49) --
	( 21.55, 70.58) --
	( 21.88, 75.59) --
	( 22.21, 84.06) --
	( 22.54, 80.22) --
	( 22.87, 81.52) --
	( 23.20, 82.00) --
	( 23.53, 84.09) --
	( 23.86, 83.31) --
	( 24.19, 85.04) --
	( 24.52, 82.34) --
	( 24.85, 83.26) --
	( 25.18, 86.04) --
	( 25.51, 87.06) --
	( 25.84, 81.11) --
	( 26.17, 80.70) --
	( 26.50, 79.30) --
	( 26.83, 78.55) --
	( 27.16, 69.65) --
	( 27.49, 71.31) --
	( 27.82, 66.05) --
	( 28.15, 72.26) --
	( 28.48, 66.84) --
	( 28.81, 58.15) --
	( 29.14, 63.21) --
	( 29.47, 65.26) --
	( 29.80, 64.72) --
	( 30.13, 59.88) --
	( 30.46, 61.74) --
	( 30.79, 63.97) --
	( 31.12, 66.02) --
	( 31.45, 64.26) --
	( 31.78, 63.78) --
	( 32.11, 62.12) --
	( 32.44, 61.83) --
	( 32.77, 65.59) --
	( 33.10, 64.43) --
	( 33.43, 60.69) --
	( 33.76, 58.66) --
	( 34.09, 61.45) --
	( 34.42, 65.99) --
	( 34.75, 68.72) --
	( 35.08, 62.61) --
	( 35.41, 64.70) --
	( 35.74, 67.43) --
	( 36.07, 69.22) --
	( 36.40, 61.70) --
	( 36.73, 66.47) --
	( 37.06, 68.99) --
	( 37.39, 64.67) --
	( 37.72, 64.98) --
	( 38.05, 62.89) --
	( 38.39, 64.10) --
	( 38.72, 66.43) --
	( 39.05, 63.83) --
	( 39.38, 61.89) --
	( 39.71, 58.76) --
	( 40.04, 55.85) --
	( 40.37, 53.80) --
	( 40.70, 53.60) --
	( 41.03, 53.31) --
	( 41.36, 52.88) --
	( 41.69, 51.43) --
	( 42.02, 56.66) --
	( 42.35, 55.05) --
	( 42.68, 50.24) --
	( 43.01, 42.59) --
	( 43.34, 44.28) --
	( 43.67, 45.20) --
	( 44.00, 44.98) --
	( 44.33, 44.12) --
	( 44.66, 33.77) --
	( 44.99, 36.60) --
	( 45.32, 32.95) --
	( 45.65, 38.52) --
	( 45.98, 30.53) --
	( 46.31, 32.61) --
	( 46.64, 25.92) --
	( 46.97, 25.70) --
	( 47.30, 29.21) --
	( 47.63, 29.76) --
	( 47.96, 25.95) --
	( 48.29, 27.42) --
	( 48.62, 30.47) --
	( 48.95, 29.26) --
	( 49.28, 31.01) --
	( 49.61, 27.05) --
	( 49.94, 22.79) --
	( 50.27, 27.55) --
	( 50.60, 28.95) --
	( 50.93, 31.24) --
	( 51.26, 30.10) --
	( 51.59, 29.70) --
	( 51.92, 28.52) --
	( 52.25, 29.41) --
	( 52.58, 24.45) --
	( 52.91, 23.73) --
	( 53.24, 26.35) --
	( 53.57, 26.98) --
	( 53.90, 36.55) --
	( 54.23, 35.57) --
	( 54.56, 35.26) --
	( 54.89, 36.87) --
	( 55.22, 43.03) --
	( 55.55, 45.37) --
	( 55.88, 46.68) --
	( 56.21, 56.28) --
	( 56.54, 62.26) --
	( 56.87, 60.34) --
	( 57.20, 58.00) --
	( 57.53, 49.53) --
	( 57.86, 45.25) --
	( 58.19, 54.29) --
	( 58.52, 55.62) --
	( 58.85, 53.32) --
	( 59.18, 51.34) --
	( 59.51, 52.77) --
	( 59.84, 55.18) --
	( 60.17, 53.53) --
	( 60.50, 49.74) --
	( 60.83, 47.23) --
	( 61.16, 48.94) --
	( 61.49, 49.74) --
	( 61.82, 51.65) --
	( 62.16, 54.47) --
	( 62.49, 54.73) --
	( 62.82, 53.35) --
	( 63.15, 54.26) --
	( 63.48, 50.97) --
	( 63.81, 54.22) --
	( 64.14, 54.47) --
	( 64.47, 53.97) --
	( 64.80, 54.52) --
	( 65.13, 62.32) --
	( 65.46, 59.00) --
	( 65.79, 60.46) --
	( 66.12, 62.95) --
	( 66.45, 60.88) --
	( 66.78, 64.38) --
	( 67.11, 62.44) --
	( 67.44, 58.20) --
	( 67.77, 63.75) --
	( 68.10, 70.99) --
	( 68.43, 69.77) --
	( 68.76, 73.24) --
	( 69.09, 73.34) --
	( 69.42, 73.81) --
	( 69.75, 72.54) --
	( 70.08, 79.94) --
	( 70.41, 77.46) --
	( 70.74, 73.50) --
	( 71.07, 73.62) --
	( 71.40, 82.34) --
	( 71.73, 75.73) --
	( 72.06, 75.19) --
	( 72.39, 73.74) --
	( 72.72, 70.48) --
	( 73.05, 70.92) --
	( 73.38, 73.24) --
	( 73.71, 76.17) --
	( 74.04, 76.33) --
	( 74.37, 76.93) --
	( 74.70, 75.89) --
	( 75.03, 75.84) --
	( 75.36, 75.12) --
	( 75.69, 77.84) --
	( 76.02, 77.27) --
	( 76.35, 72.20) --
	( 76.68, 76.31) --
	( 77.01, 82.56) --
	( 77.34, 82.84) --
	( 77.67, 80.41) --
	( 78.00, 78.82) --
	( 78.33, 76.25) --
	( 78.66, 74.42) --
	( 78.99, 70.78) --
	( 79.32, 68.68) --
	( 79.65, 63.24) --
	( 79.98, 66.37) --
	( 80.31, 68.98) --
	( 80.64, 68.78) --
	( 80.97, 69.18) --
	( 81.30, 70.67) --
	( 81.63, 71.11) --
	( 81.96, 68.87) --
	( 82.29, 67.70) --
	( 82.62, 66.06) --
	( 82.95, 65.90) --
	( 83.28, 63.80) --
	( 83.61, 67.42) --
	( 83.94, 64.59) --
	( 84.27, 65.46) --
	( 84.60, 61.64) --
	( 84.93, 64.97) --
	( 85.26, 67.75) --
	( 85.59, 65.75) --
	( 85.93, 65.77) --
	( 86.26, 70.67) --
	( 86.59, 66.31) --
	( 86.92, 63.75) --
	( 87.25, 57.79) --
	( 87.58, 62.60) --
	( 87.91, 59.99) --
	( 88.24, 61.93) --
	( 88.57, 63.66) --
	( 88.90, 66.59) --
	( 89.23, 76.39) --
	( 89.56, 77.24) --
	( 89.89, 80.11) --
	( 90.22, 81.25) --
	( 90.55, 83.79) --
	( 90.88, 82.33) --
	( 91.21, 75.87) --
	( 91.54, 74.49) --
	( 91.87, 74.39) --
	( 92.20, 74.22) --
	( 92.53, 74.65) --
	( 92.86, 73.22) --
	( 93.19, 76.54) --
	( 93.52, 70.83) --
	( 93.85, 72.07) --
	( 94.18, 72.83) --
	( 94.51, 73.59) --
	( 94.84, 74.23) --
	( 95.17, 73.74) --
	( 95.50, 74.15) --
	( 95.83, 73.96) --
	( 96.16, 84.20) --
	( 96.49, 88.10) --
	( 96.82, 84.84) --
	( 97.15, 83.96) --
	( 97.48, 78.22) --
	( 97.81, 80.58) --
	( 98.14, 75.42) --
	( 98.47, 76.84) --
	( 98.80, 75.45) --
	( 99.13, 76.93) --
	( 99.46, 75.20) --
	( 99.79, 77.43) --
	(100.12, 77.53) --
	(100.45, 80.92) --
	(100.78, 85.38) --
	(101.11, 89.04) --
	(101.44, 89.26) --
	(101.77, 90.06) --
	(102.10, 88.35) --
	(102.43, 86.86) --
	(102.76, 85.22) --
	(103.09, 91.74) --
	(103.42, 90.14) --
	(103.75, 94.31) --
	(104.08, 95.16) --
	(104.41, 95.52) --
	(104.74, 95.89) --
	(105.07, 92.98) --
	(105.40, 89.00) --
	(105.73, 91.64) --
	(106.06, 89.94) --
	(106.39, 92.40) --
	(106.72, 96.36) --
	(107.05, 99.52) --
	(107.38, 94.21) --
	(107.71, 94.02) --
	(108.04, 91.58) --
	(108.37, 97.41) --
	(108.70,103.44) --
	(109.03,109.80) --
	(109.36,112.02) --
	(109.70,113.49) --
	(110.03,105.77) --
	(110.36,107.84) --
	(110.69,101.08) --
	(111.02,102.59) --
	(111.35,100.82) --
	(111.68, 99.17) --
	(112.01, 95.41) --
	(112.34, 91.19) --
	(112.67, 90.90) --
	(113.00, 93.42) --
	(113.33, 95.81) --
	(113.66, 93.24) --
	(113.99, 95.10) --
	(114.32, 92.57) --
	(114.65, 92.50) --
	(114.98, 88.63) --
	(115.31, 92.01) --
	(115.64, 87.83) --
	(115.97, 90.08) --
	(116.30, 89.49) --
	(116.63, 92.54) --
	(116.96, 98.94) --
	(117.29, 96.01) --
	(117.62, 99.53) --
	(117.95, 95.04) --
	(118.28, 95.06) --
	(118.61, 90.63) --
	(118.94, 85.95) --
	(119.27, 89.91) --
	(119.60, 90.64) --
	(119.93, 94.23) --
	(120.26, 92.67) --
	(120.59, 89.48) --
	(120.92, 90.71) --
	(121.25, 86.27) --
	(121.58, 86.76) --
	(121.91, 92.65) --
	(122.24, 95.34) --
	(122.57, 90.56) --
	(122.90, 93.29) --
	(123.23, 89.91) --
	(123.56, 90.56) --
	(123.89, 97.92) --
	(124.22,106.75) --
	(124.55,106.87) --
	(124.88,106.09) --
	(125.21,109.81) --
	(125.54,114.99) --
	(125.87,106.79) --
	(126.20,106.68) --
	(126.53,106.83) --
	(126.86,107.82) --
	(127.19,116.46) --
	(127.52,110.13) --
	(127.85,116.72) --
	(128.18,114.72) --
	(128.51,115.16) --
	(128.84,113.63) --
	(129.17,114.73) --
	(129.50,117.59) --
	(129.83,122.53) --
	(130.16,121.58) --
	(130.49,127.85) --
	(130.82,128.23) --
	(131.15,124.90);

\node[text=drawColor,anchor=base,inner sep=0pt, outer sep=0pt, scale=  1.00] at ( 73.38,  5.77) {$\scriptstyle Y^{(1)}$};

\path[draw=drawColor,line width= 0.4pt,line join=round,line cap=round] (131.15, 55.96) --
	(131.48, 53.00) --
	(131.81, 53.25) --
	(132.14, 53.91) --
	(132.47, 63.70) --
	(132.80, 64.42) --
	(133.13, 66.55) --
	(133.47, 59.10) --
	(133.80, 56.82) --
	(134.13, 62.43) --
	(134.46, 61.75) --
	(134.79, 60.43) --
	(135.12, 58.53) --
	(135.45, 56.42) --
	(135.78, 58.34) --
	(136.11, 50.61) --
	(136.44, 51.41) --
	(136.77, 52.20) --
	(137.10, 55.44) --
	(137.43, 51.68) --
	(137.76, 59.47) --
	(138.09, 56.80) --
	(138.42, 59.34) --
	(138.75, 60.48) --
	(139.08, 59.39) --
	(139.41, 57.12) --
	(139.74, 59.12) --
	(140.07, 59.49) --
	(140.40, 49.03) --
	(140.73, 57.21) --
	(141.06, 63.64) --
	(141.39, 62.04) --
	(141.72, 55.14) --
	(142.05, 55.52) --
	(142.38, 55.96) --
	(142.71, 59.38) --
	(143.04, 58.98) --
	(143.37, 56.76) --
	(143.70, 57.38) --
	(144.03, 60.26) --
	(144.36, 59.75) --
	(144.69, 58.11) --
	(145.02, 52.80) --
	(145.35, 53.05) --
	(145.68, 57.66) --
	(146.01, 55.45) --
	(146.34, 58.26) --
	(146.67, 61.49) --
	(147.00, 65.79) --
	(147.33, 59.13) --
	(147.66, 60.25) --
	(147.99, 61.24) --
	(148.32, 59.69) --
	(148.65, 57.05) --
	(148.98, 62.66) --
	(149.31, 65.70) --
	(149.64, 70.83) --
	(149.97, 66.97) --
	(150.30, 54.20) --
	(150.63, 54.29) --
	(150.96, 53.07) --
	(151.29, 56.70) --
	(151.62, 51.95) --
	(151.95, 45.70) --
	(152.28, 38.04) --
	(152.61, 36.17) --
	(152.94, 35.00) --
	(153.27, 35.62) --
	(153.60, 31.65) --
	(153.93, 33.70) --
	(154.26, 35.07) --
	(154.59, 32.28) --
	(154.92, 32.17) --
	(155.25, 31.78) --
	(155.58, 33.00) --
	(155.91, 30.15) --
	(156.24, 33.43) --
	(156.57, 38.26) --
	(156.90, 37.29) --
	(157.24, 37.15) --
	(157.57, 42.66) --
	(157.90, 39.51) --
	(158.23, 36.70) --
	(158.56, 35.91) --
	(158.89, 42.46) --
	(159.22, 36.62) --
	(159.55, 39.15) --
	(159.88, 41.44) --
	(160.21, 45.42) --
	(160.54, 40.03) --
	(160.87, 41.68) --
	(161.20, 45.36) --
	(161.53, 50.75) --
	(161.86, 51.93) --
	(162.19, 49.26) --
	(162.52, 51.29) --
	(162.85, 50.50) --
	(163.18, 52.59) --
	(163.51, 48.91) --
	(163.84, 45.70) --
	(164.17, 48.65) --
	(164.50, 49.08) --
	(164.83, 53.49) --
	(165.16, 56.11) --
	(165.49, 62.44) --
	(165.82, 60.60) --
	(166.15, 55.14) --
	(166.48, 54.32) --
	(166.81, 60.34) --
	(167.14, 59.20) --
	(167.47, 59.52) --
	(167.80, 61.39) --
	(168.13, 62.20) --
	(168.46, 61.42) --
	(168.79, 56.54) --
	(169.12, 62.45) --
	(169.45, 59.68) --
	(169.78, 58.06) --
	(170.11, 55.22) --
	(170.44, 51.90) --
	(170.77, 53.94);

\node[text=drawColor,anchor=base,inner sep=0pt, outer sep=0pt, scale=  1.00] at (150.96,  5.77) {$\scriptstyle Y^{(2)}$};

\path[draw=drawColor,line width= 0.4pt,line join=round,line cap=round] (170.77, 55.96) --
	(171.10, 55.79) --
	(171.43, 55.86) --
	(171.76, 63.15) --
	(172.09, 59.89) --
	(172.42, 62.83) --
	(172.75, 65.93) --
	(173.08, 63.35) --
	(173.41, 70.87) --
	(173.74, 75.34) --
	(174.07, 66.24) --
	(174.40, 64.14) --
	(174.73, 65.47) --
	(175.06, 62.83) --
	(175.39, 67.44) --
	(175.72, 67.37) --
	(176.05, 67.88) --
	(176.38, 64.59) --
	(176.71, 69.68) --
	(177.04, 74.41) --
	(177.37, 73.68) --
	(177.70, 73.97) --
	(178.03, 74.72) --
	(178.36, 75.11) --
	(178.69, 72.00) --
	(179.02, 78.76) --
	(179.35, 79.93) --
	(179.68, 78.81) --
	(180.01, 75.24) --
	(180.34, 70.84) --
	(180.67, 73.54) --
	(181.01, 78.64) --
	(181.34, 81.05) --
	(181.67, 79.85) --
	(182.00, 72.64) --
	(182.33, 66.16) --
	(182.66, 68.39) --
	(182.99, 63.40) --
	(183.32, 61.31) --
	(183.65, 63.56) --
	(183.98, 57.68) --
	(184.31, 58.74) --
	(184.64, 49.91) --
	(184.97, 50.38) --
	(185.30, 45.72) --
	(185.63, 40.69) --
	(185.96, 39.36) --
	(186.29, 40.67) --
	(186.62, 41.19) --
	(186.95, 47.84) --
	(187.28, 49.35) --
	(187.61, 52.60) --
	(187.94, 54.32) --
	(188.27, 53.05) --
	(188.60, 51.22) --
	(188.93, 51.22) --
	(189.26, 50.49) --
	(189.59, 51.15) --
	(189.92, 52.13) --
	(190.25, 52.60) --
	(190.58, 52.78) --
	(190.91, 52.13) --
	(191.24, 45.39) --
	(191.57, 38.64) --
	(191.90, 40.47) --
	(192.23, 45.59) --
	(192.56, 49.76) --
	(192.89, 47.32) --
	(193.22, 48.90) --
	(193.55, 46.27) --
	(193.88, 49.28) --
	(194.21, 48.46) --
	(194.54, 48.25) --
	(194.87, 53.27) --
	(195.20, 47.49) --
	(195.53, 48.80) --
	(195.86, 47.74) --
	(196.19, 46.62) --
	(196.52, 42.10) --
	(196.85, 34.23) --
	(197.18, 36.31) --
	(197.51, 33.39) --
	(197.84, 32.25) --
	(198.17, 32.84) --
	(198.50, 29.07) --
	(198.83, 33.86) --
	(199.16, 30.99) --
	(199.49, 33.21) --
	(199.82, 26.04) --
	(200.15, 21.48) --
	(200.48, 27.66) --
	(200.81, 28.85) --
	(201.14, 28.49) --
	(201.47, 30.24) --
	(201.80, 26.19) --
	(202.13, 32.25) --
	(202.46, 33.17) --
	(202.79, 31.87) --
	(203.12, 29.65) --
	(203.45, 32.99) --
	(203.78, 34.32) --
	(204.11, 29.31) --
	(204.44, 27.40) --
	(204.78, 23.48) --
	(205.11, 31.10) --
	(205.44, 28.49) --
	(205.77, 27.43) --
	(206.10, 32.06) --
	(206.43, 30.70) --
	(206.76, 24.32) --
	(207.09, 20.83) --
	(207.42, 20.76) --
	(207.75, 25.42) --
	(208.08, 23.10) --
	(208.41, 20.68) --
	(208.74, 18.69) --
	(209.07, 17.20) --
	(209.40, 19.14) --
	(209.73, 28.38) --
	(210.06, 27.86) --
	(210.39, 32.34) --
	(210.72, 35.44) --
	(211.05, 33.98) --
	(211.38, 32.89) --
	(211.71, 32.07) --
	(212.04, 28.57) --
	(212.37, 27.59) --
	(212.70, 27.98) --
	(213.03, 20.93) --
	(213.36, 22.02) --
	(213.69, 24.18);

\node[text=drawColor,anchor=base,inner sep=0pt, outer sep=0pt, scale=  1.00] at (192.23,  5.77) {$\scriptstyle Y^{(3)}$};

\path[draw=drawColor,line width= 0.4pt,line join=round,line cap=round] (213.69, 55.96) --
	(214.02, 53.00) --
	(214.35, 58.32) --
	(214.68, 56.65) --
	(215.01, 54.14) --
	(215.34, 59.62) --
	(215.67, 57.99) --
	(216.00, 66.17) --
	(216.33, 63.80) --
	(216.66, 69.37) --
	(216.99, 72.16) --
	(217.32, 70.94) --
	(217.65, 69.90) --
	(217.98, 68.27) --
	(218.31, 70.42) --
	(218.64, 74.79) --
	(218.97, 77.43) --
	(219.30, 74.71) --
	(219.63, 80.97) --
	(219.96, 75.62) --
	(220.29, 69.86) --
	(220.62, 73.41) --
	(220.95, 75.31) --
	(221.28, 70.46) --
	(221.61, 70.47) --
	(221.94, 72.04) --
	(222.27, 81.01) --
	(222.60, 83.28) --
	(222.93, 87.06) --
	(223.26, 82.88) --
	(223.59, 88.48) --
	(223.92, 91.10) --
	(224.25, 97.19) --
	(224.58,103.79) --
	(224.91,103.68) --
	(225.24,107.70) --
	(225.57,109.25) --
	(225.90,104.85) --
	(226.23,108.30) --
	(226.56,112.46) --
	(226.89,109.55) --
	(227.22,112.43) --
	(227.55,111.33) --
	(227.88,105.61) --
	(228.21,103.88) --
	(228.55,102.42) --
	(228.88,106.74) --
	(229.21,103.27) --
	(229.54,106.88) --
	(229.87,108.55) --
	(230.20,105.19) --
	(230.53,104.02) --
	(230.86,106.85) --
	(231.19,106.57) --
	(231.52,105.48) --
	(231.85,107.46) --
	(232.18,110.11) --
	(232.51,108.60) --
	(232.84,108.63) --
	(233.17,103.17) --
	(233.50,103.20) --
	(233.83, 94.64) --
	(234.16, 92.13) --
	(234.49, 92.71) --
	(234.82, 97.61) --
	(235.15,108.95) --
	(235.48,103.81) --
	(235.81, 98.83) --
	(236.14, 99.17) --
	(236.47,102.39) --
	(236.80,104.87) --
	(237.13,101.75) --
	(237.46,102.49) --
	(237.79,109.67) --
	(238.12,112.72) --
	(238.45,113.51) --
	(238.78,113.48) --
	(239.11,117.52) --
	(239.44,117.61) --
	(239.77,124.61) --
	(240.10,122.68) --
	(240.43,127.69) --
	(240.76,125.10) --
	(241.09,122.24) --
	(241.42,125.29) --
	(241.75,122.61) --
	(242.08,122.62) --
	(242.41,113.40) --
	(242.74,110.18) --
	(243.07,104.63) --
	(243.40,100.60) --
	(243.73,100.44) --
	(244.06, 99.09) --
	(244.39, 99.50) --
	(244.72,108.25) --
	(245.05,114.99) --
	(245.38,117.33) --
	(245.71,120.91) --
	(246.04,118.51) --
	(246.37,120.79) --
	(246.70,115.51) --
	(247.03,114.62) --
	(247.36,108.98) --
	(247.69,111.48) --
	(248.02,116.94) --
	(248.35,113.77) --
	(248.68,117.47) --
	(249.01,120.41) --
	(249.34,121.71) --
	(249.67,121.16) --
	(250.00,125.63) --
	(250.33,126.42) --
	(250.66,129.75) --
	(250.99,136.48) --
	(251.32,131.72) --
	(251.65,129.28) --
	(251.98,128.77) --
	(252.32,129.32) --
	(252.65,128.09) --
	(252.98,126.13) --
	(253.31,129.60) --
	(253.64,128.07) --
	(253.97,121.75) --
	(254.30,121.75) --
	(254.63,119.36) --
	(254.96,125.26) --
	(255.29,123.80) --
	(255.62,134.00) --
	(255.95,136.43) --
	(256.28,130.60) --
	(256.61,134.30) --
	(256.94,137.77) --
	(257.27,137.14) --
	(257.60,133.66) --
	(257.93,129.08) --
	(258.26,132.23) --
	(258.59,130.82) --
	(258.92,123.82) --
	(259.25,120.12) --
	(259.58,116.50) --
	(259.91,120.91) --
	(260.24,113.10) --
	(260.57,111.01) --
	(260.90,109.40) --
	(261.23,109.65) --
	(261.56,109.18) --
	(261.89,105.38) --
	(262.22,110.05) --
	(262.55,112.64) --
	(262.88,114.72) --
	(263.21,107.81) --
	(263.54,104.98) --
	(263.87,106.29) --
	(264.20,106.39) --
	(264.53,109.48) --
	(264.86,110.61) --
	(265.19,107.25) --
	(265.52,109.92) --
	(265.85,110.26) --
	(266.18,108.07) --
	(266.51,107.06) --
	(266.84,103.18) --
	(267.17, 98.50) --
	(267.50, 97.78) --
	(267.83, 98.35) --
	(268.16, 97.54) --
	(268.49, 92.68) --
	(268.82, 96.98) --
	(269.15, 91.54) --
	(269.48, 85.62) --
	(269.81, 88.22) --
	(270.14, 90.90) --
	(270.47, 97.41) --
	(270.80, 91.88) --
	(271.13, 89.54) --
	(271.46, 93.80) --
	(271.79, 93.09) --
	(272.12, 93.40) --
	(272.45, 98.54) --
	(272.78,101.24) --
	(273.11,102.93) --
	(273.44,110.39) --
	(273.77,107.51) --
	(274.10,109.34) --
	(274.43,104.62) --
	(274.76,102.74) --
	(275.09, 96.96) --
	(275.42, 99.40) --
	(275.75, 99.63) --
	(276.09, 99.23) --
	(276.42,105.76) --
	(276.75,103.76) --
	(277.08,106.42) --
	(277.41,109.47) --
	(277.74,110.14) --
	(278.07,108.63) --
	(278.40,108.18) --
	(278.73,108.60) --
	(279.06,116.72) --
	(279.39,111.41) --
	(279.72, 99.43) --
	(280.05,102.07) --
	(280.38,100.48) --
	(280.71,101.38) --
	(281.04, 95.49) --
	(281.37, 94.15) --
	(281.70, 94.20) --
	(282.03, 95.61) --
	(282.36, 93.47) --
	(282.69, 90.33) --
	(283.02, 92.81) --
	(283.35, 92.35) --
	(283.68, 94.12) --
	(284.01, 94.36) --
	(284.34, 90.07) --
	(284.67, 86.31) --
	(285.00, 89.44) --
	(285.33, 91.52) --
	(285.66, 87.13) --
	(285.99, 88.75) --
	(286.32, 91.98) --
	(286.65, 95.93) --
	(286.98, 95.23) --
	(287.31, 98.82) --
	(287.64, 98.80) --
	(287.97,105.58) --
	(288.30,108.78) --
	(288.63,107.98) --
	(288.96,109.35) --
	(289.29,111.17) --
	(289.62,108.21) --
	(289.95,106.55) --
	(290.28,112.32) --
	(290.61,111.02) --
	(290.94,109.15) --
	(291.27,110.93) --
	(291.60,106.41) --
	(291.93,109.42) --
	(292.26,110.46) --
	(292.59,108.21) --
	(292.92,115.89) --
	(293.25,109.67) --
	(293.58,115.84) --
	(293.91,118.50) --
	(294.24,123.54) --
	(294.57,128.32) --
	(294.90,125.74) --
	(295.23,122.69) --
	(295.56,125.30) --
	(295.89,120.96) --
	(296.22,116.41) --
	(296.55,105.60) --
	(296.88,106.68) --
	(297.21,103.31) --
	(297.54,108.89) --
	(297.87,109.68) --
	(298.20,113.07) --
	(298.53,116.66) --
	(298.86,121.19) --
	(299.19,115.50) --
	(299.52,112.25);

\node[text=drawColor,anchor=base,inner sep=0pt, outer sep=0pt, scale=  1.00] at (256.61,  5.77) {$\scriptstyle Y^{(4)}$};

\path[draw=drawColor,line width= 0.4pt,dash pattern=on 1pt off 3pt ,line join=round,line cap=round] ( 15.61, 55.96) --
	(345.74, 55.96);

\path[draw=drawColor,line width= 0.4pt,line join=round,line cap=round] (299.52, 55.96) --
	(299.86, 58.09) --
	(300.19, 56.70) --
	(300.52, 53.59) --
	(300.85, 58.77) --
	(301.18, 56.29) --
	(301.51, 52.03) --
	(301.84, 53.60) --
	(302.17, 49.68) --
	(302.50, 45.86) --
	(302.83, 45.26) --
	(303.16, 48.91) --
	(303.49, 52.29) --
	(303.82, 57.26) --
	(304.15, 58.47) --
	(304.48, 55.09) --
	(304.81, 56.36) --
	(305.14, 59.29) --
	(305.47, 61.21) --
	(305.80, 61.66) --
	(306.13, 64.32) --
	(306.46, 66.71) --
	(306.79, 63.58) --
	(307.12, 65.29) --
	(307.45, 70.12) --
	(307.78, 67.55) --
	(308.11, 77.59) --
	(308.44, 78.73) --
	(308.77, 76.03) --
	(309.10, 73.53) --
	(309.43, 74.56) --
	(309.76, 76.97) --
	(310.09, 74.08) --
	(310.42, 77.75) --
	(310.75, 78.84) --
	(311.08, 78.44) --
	(311.41, 85.95) --
	(311.74, 86.09) --
	(312.07, 82.68) --
	(312.40, 78.03) --
	(312.73, 81.44) --
	(313.06, 87.60) --
	(313.39, 91.54) --
	(313.72, 93.69) --
	(314.05, 93.20) --
	(314.38, 89.78) --
	(314.71, 86.70) --
	(315.04, 91.09) --
	(315.37, 92.12) --
	(315.70, 96.87) --
	(316.03, 88.28) --
	(316.36, 92.25) --
	(316.69, 92.42) --
	(317.02, 86.78) --
	(317.35, 89.09) --
	(317.68, 86.35) --
	(318.01, 90.37) --
	(318.34, 96.35) --
	(318.67,101.01) --
	(319.00,100.23) --
	(319.33,105.49) --
	(319.66,106.73) --
	(319.99,106.25) --
	(320.32,111.59) --
	(320.65,115.31) --
	(320.98,115.85) --
	(321.31,118.47) --
	(321.64,119.51) --
	(321.97,116.89) --
	(322.30,118.72) --
	(322.63,110.46) --
	(322.96,104.48) --
	(323.29, 99.45) --
	(323.63, 98.53) --
	(323.96, 99.24) --
	(324.29,106.34) --
	(324.62,105.15) --
	(324.95,104.26) --
	(325.28,110.25) --
	(325.61,113.39) --
	(325.94,112.37) --
	(326.27,112.82) --
	(326.60,112.17) --
	(326.93,112.67) --
	(327.26,112.01) --
	(327.59,109.27) --
	(327.92,104.78) --
	(328.25,104.40) --
	(328.58,110.47) --
	(328.91,114.99) --
	(329.24,120.07) --
	(329.57,119.44) --
	(329.90,121.34) --
	(330.23,122.89) --
	(330.56,128.39) --
	(330.89,125.31) --
	(331.22,128.19) --
	(331.55,132.89) --
	(331.88,133.98) --
	(332.21,133.97) --
	(332.54,134.73) --
	(332.87,131.49) --
	(333.20,138.44) --
	(333.53,138.27) --
	(333.86,138.33) --
	(334.19,145.45) --
	(334.52,145.03) --
	(334.85,149.13) --
	(335.18,148.40) --
	(335.51,147.53) --
	(335.84,147.20) --
	(336.17,144.78) --
	(336.50,150.17) --
	(336.83,144.08) --
	(337.16,146.26) --
	(337.49,146.06) --
	(337.82,140.65) --
	(338.15,142.99) --
	(338.48,140.88) --
	(338.81,139.00) --
	(339.14,140.47) --
	(339.47,140.61) --
	(339.80,143.65) --
	(340.13,148.65) --
	(340.46,151.75) --
	(340.79,152.39) --
	(341.12,148.67) --
	(341.45,149.87) --
	(341.78,139.82) --
	(342.11,135.70) --
	(342.44,131.61) --
	(342.77,130.69) --
	(343.10,129.94) --
	(343.43,128.61) --
	(343.76,130.56) --
	(344.09,128.03) --
	(344.42,125.73) --
	(344.75,122.06) --
	(345.08,122.53) --
	(345.41,122.89) --
	(345.74,124.68);
\end{scope}
\end{tikzpicture}

\end{center}
\caption{\small The word sequence $Y$. Upper part: Brownian path $X$ and renewal times $T$. 
Lower part: increments of the path between the renewal times (which are elements of $F$).}
\label{fig-wordsequence}
\end{figure}

Define the \emph{word sequence} $Y = (Y^{(i)})_{i\in\N}$ by putting (see 
Fig.~\ref{fig-wordsequence})
\begin{equation} 
\qquad Y^{(i)} = \Big(T_i-T_{i-1}, \big(X_{(s+T_{i-1}) \wedge T_i}
-X_{T_{i-1}}\big)_{s\geq 0}\Big), 
\end{equation}
which takes values in the \emph{word space}
\begin{equation} 
\label{eq:defF}
F = \bigcup_{t>0} \Big(\{t\} \times \big\{ f \in C([0,\infty))\colon\, 
f(0)=0, f(s)=f(t) \; \text{for} \; s > t \big\}\Big)
\end{equation}
equipped with a Skorohod-type metric (see Appendix~\ref{metrics}). Let 
\begin{equation}
Y^{N\text{-}\mathrm{per}} = \big(\,\underbrace{Y^{(1)},Y^{(2)},\dots,Y^{(N)}},\,
\underbrace{Y^{(1)},Y^{(2)},\dots,Y^{(N)}},\,\dots\big)
\end{equation} 
denote the $N$-periodisation of $Y$, and let   
\begin{equation} 
\label{RNdef}
R_N = \frac1N \sum_{i=0}^{N-1} 
\delta_{\widetilde{\theta}^i Y^{N\text{-}\mathrm{per}}}
\end{equation}
be the \emph{empirical process of words}, where $\widetilde{\theta}$ is the left-shift 
acting on $F^\N$. Note that $R_N$ takes values in $\mathcal{P}^{\mathrm{inv}}(F^\N)$, 
the set of shift-invariant probability measures on $F^\N$. Endow $F^\N$ with the product 
topology and $\mathcal{P}^{\mathrm{inv}}(F^\N)$ with the corresponding weak topology. 
When averaged over $X$ and $T$, the law of $Y$ is ($\mathscr{L}$ denotes law)
\begin{equation}
\label{qrhoWdef} 
Q_{\rho,\WM}=(q_{\rho,\WM})^{\otimes\N} \quad \text{with} \quad
q_{\rho,\WM} = \int_{(0,\infty)} \rho(dt)\,
\mathscr{L}\big((t, (X_{s \wedge t})_{s \geq 0})\big).
\end{equation}
By the ergodic theorem, $\wlim_{N\to\infty} R_N = Q_{\rho,\WM}$ a.s., where 
$\wlim$ denotes the weak limit. 


\subsection{Large deviation principles}
\label{LDPs}

For definitions and properties of specific relative entropy, we refer the reader to 
Appendix~\ref{entropy}.

The following theorem is standard (see e.g.\ Dembo and Zeitouni~\cite[Section 6.5.3]{DeZe98}).

\begin{theorem} 
\label{thm0:contaLDP} 
{\rm {\bf [Annealed LDP]}}\\
The family $\mathscr{L}(R_N)$, $N\in\N$, satisfies the LDP on $\mathcal{P}^{\mathrm{inv}}
(F^\N)$ with rate $N$ and with rate function
\begin{equation}
\label{eq:Iann}
I^{\mathrm{ann}}(Q)= H(Q \mid Q_{\rho,\WM}),
\end{equation}
the specific relative entropy of $Q$ w.r.t.\ $Q_{\rho,\WM}$. 
This rate function is lower 
semi-continuous, has compact level sets, is affine, and has a unique zero at 
$Q=Q_{\rho,\WM}$.
\end{theorem}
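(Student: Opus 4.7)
The plan is to reduce this to the classical process-level (level-3) LDP for i.i.d.\ sequences in a Polish space, as presented in Dembo and Zeitouni~\cite[Section~6.5.3]{DeZe98}. The key observation is that under $Q_{\rho,\WM}$ the word sequence $Y=(Y^{(i)})_{i\in\N}$ is i.i.d.\ with common law $q_{\rho,\WM}$: indeed, the interarrival times $\tau_i$ are i.i.d.\ with law $\rho$, independent of $X$, and by the strong Markov property of Brownian motion the increments $(X_{(s+T_{i-1})\wedge T_i}-X_{T_{i-1}})_{s\ge 0}$ over the disjoint time windows $[T_{i-1},T_i]$ are independent and, conditional on $\tau_i$, distributed as Brownian motion on $[0,\tau_i]$ extended by its terminal value. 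This is exactly the definition of $q_{\rho,\WM}$ in \eqref{qrhoWdef}.

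Next I would verify that the word space $F$ in \eqref{eq:defF}, equipped with the Skorohod-type metric discussed in Appendix~\ref{metrics}, is Polish. Once this is established, the product space $F^\N$ with the product topology is Polish, and the classical process-level LDP applies directly to i.i.d.\ sequences. To make the empirical measure shift-invariant one passes from the ordinary empirical measure $\frac{1}{N}\sum_{i=0}^{N-1}\delta_{\widetilde{\theta}^i Y}$ to the periodised version $R_N$ in \eqref{RNdef}; this is the standard device, and the two are exponentially equivalent (at rate $N$) because they differ only through finitely many boundary words whose contribution to the empirical measure is $O(1/N)$, so the LDP is the same for both. The rate function produced by the i.i.d.\ theorem is precisely $H(Q\mid Q_{\rho,\WM})$, the specific (process-level) relative entropy with respect to the i.i.d.\ reference measure.

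Finally, the four stated properties of $I^{\mathrm{ann}}$ are standard facts about specific relative entropy against an i.i.d.\ reference and are also inherited from the LDP itself. Lower semicontinuity and compactness of level sets are automatic for any good rate function. Affinity of $H(\cdot\mid Q_{\rho,\WM})$ in its first argument on $\mathcal{P}^{\mathrm{inv}}(F^\N)$ is the classical additivity/affineness of specific entropy under an i.i.d.\ reference (see Appendix~\ref{entropy}). The unique zero at $Q=Q_{\rho,\WM}$ follows from the fact that $H(Q\mid Q_{\rho,\WM})=0$ forces $Q$ to agree with $Q_{\rho,\WM}$ on every finite-dimensional cylinder, combined with the ergodic-theorem identification $\wlim_{N\to\infty}R_N=Q_{\rho,\WM}$ a.s., which pins down the minimiser.

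The only genuinely non-routine point is the topological setup on $F$: one must check that $F$ is Polish, that the map assigning a word to its increment path is measurable, and that the weak topology on $\mathcal{P}^{\mathrm{inv}}(F^\N)$ is the one corresponding to the product topology on $F^\N$. These are dispatched by invoking Appendix~\ref{metrics}; after that, the result is a direct citation of the process-level LDP in \cite[Section~6.5.3]{DeZe98}.
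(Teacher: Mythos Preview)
Your proposal is correct and mirrors the paper's approach: the paper does not give a proof of Theorem~\ref{thm0:contaLDP} at all but simply states that it is standard and cites Dembo and Zeitouni~\cite[Section~6.5.3]{DeZe98}, exactly the reduction you spell out. Your added remarks (i.i.d.\ structure of $Y$, Polishness of $F$, exponential equivalence of periodised vs.\ non-periodised empiricals, and the standard properties of specific relative entropy) are the routine checks that underlie that citation.
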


To state the quenched LDP, we need to look at the reverse of cutting out words, namely, 
glueing words together. Let ${y}=(y^{(i)})_{i\in\N}=((t_i,f_i))_{i\in\N} 
\in F^\N$. Then the \emph{concatenation} of ${y}$, 
written $\kappa({y}) \in C([0,\infty))$, is defined by
\begin{equation}
\begin{aligned}
&\kappa({y})(s) = f_1(t_1)+\dots+f_{i-1}(t_{i-1})
+f_i\big(s-(t_1+\cdots+t_{i-1})\big),\\ 
&t_1+\cdots+t_{i-1} \leq s < t_1+\cdots+t_{i}, \;\; i \in \N.
\end{aligned}
\end{equation}
Write $\tau_i({y})=t_i$ to denote the length of the $i$-th word. For $Q 
\in \mathcal{P}^{\mathrm{inv}}(F^\N)$ with finite mean word length $m_Q = 
\E_Q[\tau_1] =\E_Q[\tau_1(Y)]$, put
\begin{equation} 
\label{eq:PsiQcont}
\Psi_Q(A) = 
\frac{1}{m_Q} \E_Q\left[ \int_0^{\tau_1} \1_A(\theta^s \kappa(Y)) \, ds\right], 
\quad A \subset C([0,\infty))\;\;\mbox{measurable}, 
\end{equation}
where $\theta^s$ is the shift acting on $f \in C([0,\infty))$ as $\theta^s f(t) 
= f(s+t)-f(s)$, $t \geq 0$. Note that $\Psi_Q$ is a probability measure on 
$C([0,\infty))$ with stationary increments, i.e., $\Psi_Q = \Psi_Q \circ (\theta^s)^{-1}$ 
for all $s \ge 0$. We can think of $\Psi_Q$ as the ``stationarised'' version of 
$\kappa(Q)$. In fact, if $m_Q<\infty$, then 
\begin{equation}
\label{eq:PsiQ}
\Psi_Q = \wlim_{T\to\infty} \frac{1}{T} 
\int_0^T \kappa(Q) \circ (\theta^s)^{-1}\,ds,
\end{equation} 
and $\kappa(Q)$ is asymptotically mean stationary (AMS) with stationary mean $\Psi_Q$. 
In fact, the convergence in \eqref{eq:PsiQ} also holds in total variation norm (see 
Lemma~\ref{lemma:PsiQ:TVlim} in Appendix~\ref{entropy}). Note that $\Psi_{Q_{\rho,\WM}}=\WM$.

To state the quenched LDP, we also need to define word \emph{truncation}. For 
$(t,f) \in F$ and $\tr > 0$, let 
\begin{equation}
[(t,f)]_\tr = \big(t \wedge \tr, (f(s \wedge \tr)_{s\ge 0}\big)
\end{equation} 
be the word $(t,f)$ truncated at length $\tr$. Analogously, for ${y}=(y^{(i)})_{i\in\N} 
\in F^\N$ set $[{y}]_\tr=([y^{(i)}]_{\tr})_{i\in\N} \in F^\N$, and denote by 
$[Q]_\tr \in \mathcal{P}^{\mathrm{inv}}(F_{0,\tr}^\N) \subset \mathcal{P}^{\mathrm{inv}}
(F^\N)$ with $F_{0,\tr}=[F]_\tr$ the image measure of $Q \in \mathcal{P}^{\mathrm{inv}}
(F^\N)$ under the map ${y} \mapsto [{y}]_\tr$.

\begin{theorem} 
\label{thm0:contqLDP}
{\rm {\bf [Quenched LDP]}}\\
Suppose that $\rho$ satisfies {\rm (\ref{ass:rhodensdecay}--\ref{ass:rhobar.reg0})}. Then, 
for $\WM$ a.e.\ $X$, the family $\mathscr{L}(R_N \mid X)$, $N\in\N$, satisfies the LDP 
on $\mathcal{P}^{\mathrm{inv}}(F^\N)$ with rate $N$ and with deterministic rate function 
$I^{\mathrm{que}}(Q)$ given by 
\begin{equation} 
\label{eq:Iquelimitform}
I^{\mathrm{que}}(Q) = \lim_{\tr\to\infty} I^{\mathrm{que}}_\tr([Q]_\tr), 
\end{equation}
where
\begin{equation}
\label{def:Ique.tr}
I^{\mathrm{que}}_\tr([Q]_\tr) = H\big([Q]_\tr \mid [Q_{\rho,\WM}]_\tr\big) 
+ (\alpha-1) m_{[Q]_\tr} H\big(\Psi_{[Q]_\tr} \mid \WM\big).
\end{equation}
This rate function is lower semi-continuous, has compact level sets, is affine, and 
has a unique zero at $Q=Q_{\rho,\WM}$.
\end{theorem}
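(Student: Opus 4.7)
\medskip

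\textbf{Proof proposal.} The plan is to follow the discrete-time strategy of \cite{BiGrdHo10}, adapted to continuous space-time, via truncation followed by coarse-graining. First, for each $\tr > 0$, I would establish a quenched LDP for the truncated empirical process $[R_N]_\tr$ on $\mathcal{P}^{\mathrm{inv}}(F_{0,\tr}^\N)$ with rate function $I^{\mathrm{que}}_\tr$. Working at bounded word length $\tr$ has the crucial advantage that $m_{[Q]_\tr} \le \tr$ and $\kappa([Q]_\tr)$ has good moment properties, so that all relevant quantities are bounded and the contribution of overly long words (which are responsible for the $(\alpha-1)$-correction) is visible but finite. Once the truncated LDP is in place, the full LDP should follow by a projective/exponential-approximation argument: the projection $Q \mapsto [Q]_\tr$ is continuous, $I^{\mathrm{que}}_\tr([Q]_\tr)$ is monotone in $\tr$ (by monotonicity of specific relative entropy under coarsening, as in Appendix~\ref{entropy}), and the truncated empirical processes $[R_N]_\tr$ are exponentially good approximations of $R_N$ as $\tr \to \infty$, yielding \eqref{eq:Iquelimitform}.

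\medskip

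The heart of the argument is the truncated quenched LDP. For the upper bound I would combine the annealed LDP (Theorem~\ref{thm0:contaLDP}) with a conditional refinement on the environment $X$. Heuristically, $\Pr(R_N \approx Q \mid X)$ equals $\Pr(R_N \approx Q)$ divided by the probability that $X$ is compatible with $R_N \approx Q$; the first factor yields $H([Q]_\tr \mid [Q_{\rho,\WM}]_\tr)$, while the second produces the extra penalty $(\alpha-1) m_{[Q]_\tr} H(\Psi_{[Q]_\tr} \mid \WM)$. The coefficient $\alpha-1$ is the signature of the tail \eqref{ass:rhodensdecay}: a long renewal interval of length $t$ appears $\simeq t^{-(\alpha-1)}$ times in a segment of length $N$ and covers $\simeq t$ of its Brownian mass, so that only a fraction proportional to $\alpha-1$ of the full concatenation entropy $m_{[Q]_\tr} H(\Psi_{[Q]_\tr} \mid \WM)$ is needed to steer the environment. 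The lower bound uses a coarse-graining scheme at a mesoscopic scale $L \ll N$: subdivide $[0, N m_{[Q]_\tr}]$ into blocks of length $L$, prescribe in each block a word profile sampled from $[Q]_\tr$, and estimate the probability that this matches a window of actual $X$-increments via a Gaussian change-of-measure combined with a careful selection of ``good'' blocks whose local $X$-statistics are close to $\WM$.

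\medskip

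The main obstacle is this lower bound. One must produce, for $\WM$-a.e.\ $X$, enough mesoscopic blocks whose increment statistics are already close to typical $\WM$-configurations, so that steering the word law to $[Q]_\tr$ and the concatenation to $\Psi_{[Q]_\tr}$ can be performed block by block at the correct entropic cost. Two independent features must be controlled simultaneously: the block-entropy of the word distribution against the reference $Q_{\rho,\WM}$, and the entropic deviation of the concatenated path against $\WM$, the two decoupling only if the block size $L$ is chosen far below $N$ but far above the typical word scale. The quantitative emergence of the coefficient $\alpha-1$ comes from counting the effective number of independent long-word slots, for which the tail asymptotics \eqref{ass:rhodensdecay} are used in Tauberian form. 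Finally, the properties of $I^{\mathrm{que}}$ (lower semi-continuity, compact level sets, affineness, and unique zero at $Q_{\rho,\WM}$) follow from the corresponding properties of each $I^{\mathrm{que}}_\tr$ — affineness of both entropy terms is established in Appendix~\ref{entropy}, while the monotone convergence in $\tr$, via the non-standard approximation lemmas of Appendices~\ref{entropy}--\ref{contrelentr}, transfers these properties to the limit.
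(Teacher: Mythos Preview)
Your overall architecture---prove a quenched LDP for the truncated process $[R_N]_\tr$ and then pass to the limit $\tr\to\infty$ via a projective scheme---matches the paper's. But the way you propose to obtain the truncated LDP is genuinely different, and one of your structural claims is wrong.

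The paper does \emph{not} prove the truncated LDP by separate upper and lower bounds with a mesoscopic block construction. Instead, it uses the discrete quenched LDP of \cite{BiGrdHo10} as a black box (Proposition~\ref{thm00:contqLDP}) for each coarse-graining mesh $h=2^{-n}$, and then lets $h\downarrow 0$ via Bryc's inverse of Varadhan's lemma: one shows that the exponential moments $\tfrac1N\log\E[e^{N\Phi([R_N]_\tr)}\mid X]$ converge to $\lim_{h\downarrow 0}\Lambda_{h,\tr}(\Phi)$ for a well-separating class $\mathscr{C}$ of test functionals (Proposition~\ref{prop:LambdaPhilimit1tr}), and then identifies this limit as the convex transform of $I^{\mathrm{que}}_\tr$ via a Gamma-convergence statement for the rate functions (Lemma~\ref{lemma:Iquetrregularised}). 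The $(\alpha-1)$-term thus never has to be derived from scratch in continuous time; it is inherited from the discrete result. Your route---redoing the block/good-string argument of \cite{BiGrdHo10} directly in continuous time---could in principle be made to work, but you would have to reproduce essentially all of that paper with Brownian increments in place of i.i.d.\ letters, and your description of how the coefficient $\alpha-1$ emerges (``Tauberian form'', ``counting long-word slots'') is a heuristic, not a proof step.

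More seriously, your claim that $I^{\mathrm{que}}_\tr([Q]_\tr)$ is monotone in $\tr$ is unsupported and almost certainly false in general. The first term $H([Q]_\tr\mid[Q_{\rho,\WM}]_\tr)$ is indeed non-decreasing in $\tr$ by projectivity, but there is no reason for $\tr\mapsto m_{[Q]_\tr}H(\Psi_{[Q]_\tr}\mid\WM)$ to be monotone: $m_{[Q]_\tr}$ increases, while $H(\Psi_{[Q]_\tr}\mid\WM)$ need not. The paper handles this by first obtaining from Dawson--G\"artner the rate function $\sup_{\tr}I^{\mathrm{que}}_\tr([Q]_\tr)$, observing that this equals $\limsup_{\tr\to\infty}I^{\mathrm{que}}_\tr([Q]_\tr)$ (since the projective system can start at any level), and then proving separately in Proposition~\ref{prop:Ique.tr.cont} that the $\limsup$ is a genuine limit---the latter requires the non-trivial Lemma~\ref{lem:trcontinuous} on continuity of $H(\Psi_{[Q]_\tr}\mid\nu^{\otimes\N})$ under truncation, transferred to continuous time by the uniformly-shifted-grid coarse-graining of Section~\ref{prop3}. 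Without this, your passage from truncated to full LDP has a gap.
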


Theorem~\ref{thm0:contqLDP} is proved in Sections~\ref{proof}--\ref{removeass}.
Let $\mathcal{P}^{\mathrm{inv,fin}}(F^\N)=\{Q \in \mathcal{P}^{\mathrm{inv}}(F^\N)
\colon\,m_Q<\infty\}$. We will show that the limit in \eqref{eq:Iquelimitform} exists 
for all $Q \in \mathcal{P}^{\mathrm{inv}}(F^\N)$, and that
\begin{equation} 
\label{eq:Ique}
I^{\mathrm{que}}(Q) = H(Q \mid Q_{\rho,\WM}) + (\alpha-1) m_Q H(\Psi_Q \mid \WM), 
\qquad Q \in \mathcal{P}^{\mathrm{inv,fin}}(F^\N). 
\end{equation} 
We will also see that $I^{\mathrm{que}}(Q)$ is the lower semi-continuous extension to 
$\mathcal{P}^{\mathrm{inv}}(F^\N)$ of its restriction to $\mathcal{P}^{\mathrm{inv,fin}}
(F^\N)$.


\subsection{Discussion}
\label{disc}

{\bf 0.} A \emph{heuristic} behind Theorem~\ref{thm0:contqLDP} is as follows. Let 
\begin{equation}
\label{RNdefind}
R^N_{t_1,\dots,t_N}(X), \qquad 0<t_1<\dots<t_N<\infty,
\end{equation}
denote the empirical process of $N$-tuples of words when $X$ is cut at the points $t_1,
\dots,t_N$ (i.e., when $T_i=t_i$ for $i=1,\dots,N$). Fix $Q \in \mathcal{P}^{\mathrm{inv,fin}}
(F^\N)$ and suppose that $Q$ is shift-ergodic. The probability $\Pr(R_N \approx Q \mid X)$ 
is an integral over all $N$-tuples $t_1,\dots,t_N$ such that $R^N_{t_1,\dots,t_N}(X) 
\approx Q$, weighted by $\prod_{i=1}^N \bar{\rho}(t_i-t_{i-1})$ (with $t_0=0$). The fact 
that $R^N_{t_1,\dots,t_N}(X) \approx Q$ has three consequences: 
\begin{enumerate}
\item[(1)] 
The $t_1,\dots,t_N$ must cut $\approx N$ substrings out of $X$ of total length 
$\approx N m_Q$ that look like the concatenation of words that are $Q$-typical, 
i.e., that look as if generated by $\Psi_Q$ (possibly with gaps in between). This 
means that most of the cut-points must hit atypical pieces of $X$. We expect 
to have to shift $X$ by $\approx\exp[N m_Q H(\Psi_Q \mid \WM)]$ in order to 
find the first contiguous substring of length $N m_Q$ whose empirical shifts lie 
in a small neighbourhood of $\Psi_Q$. By (\ref{ass:rhodensdecay}), the probability 
for the single increment $t_1-t_0$ to have the size of this shift is $\approx 
\exp[-N\alpha\,m_Q H(\Psi_Q \mid \WM)]$. 
\item[(2)] 
The ``number of local perturbations'' of $t_1,\dots,t_N$ preserving the property 
$R^N_{t_1,\dots,t_N}(X)\approx Q$ is $\approx \exp[NH_{\tau|K}(Q)]$, where 
$H_{\tau|K}$ stands for the \emph{conditional specific entropy (density) 
of word lengths under the law $Q$}.
\item[(3)] 
The statistics of the increments $t_1-t_0,\dots,t_N-t_{N-1}$ must be close to the 
distribution of word lengths under $Q$. Hence, the weight factor $\prod_{i=1}^N 
\bar{\rho}(t_i-t_{i-1})$ must be $\approx \exp[N \E_Q[\log\bar{\rho}(\tau_1)]]$ (at 
least, for $Q$-typical pieces).
\end{enumerate}
Since
\begin{equation}
\label{eqnsre1}
m_Q H(\Psi_Q \mid \WM) - H_{\tau|K}(Q) - \E_Q[\log\bar{\rho}(\tau_1)]
= H(Q \mid q_{\rho,\WM}),
\end{equation}
the observations made in (1)--(3) combine to explain the shape of the quenched 
rate function in \eqref{eq:Ique}. 
For further details, see \cite[Section~1.5]{BiGrdHo10}. 
\smallskip

\noindent
\emph{Note:} We have not defined $H_{\tau|K}(Q)$ rigorously here, nor do we 
prove \eqref{eqnsre1}. 
Our proof of Theorem~\ref{thm0:contqLDP} uses the above heuristic only very 
implicitly. Rather, it starts from the discrete-time quenched LDP derived in 
\cite{BiGrdHo10} and draws out Theorem~\ref{thm0:contqLDP} via 
control of 
exponential functionals through a coarse-graining 
approximation. 

\medskip\noindent
{\bf 1.}
We can include the cases $\alpha=1$ and $\alpha=\infty$ in \eqref{ass:rhodensdecay}. 

\begin{theorem} 
\label{mainthmboundarycases} 
Suppose that $\rho$ satisfies {\rm (\ref{ass:rhodensdecay}--\ref{ass:rhobar.reg0})}.\\ 
{\rm (a)} If $\alpha=1$, then the quenched LDP holds with $I^\mathrm{que}=I^\mathrm{ann}$
given by \eqref{eq:Iann}.\\
{\rm (b)} If $\alpha=\infty$, then the quenched LDP holds with rate function 
\begin{equation}
\label{eq:ratefctalphainfty} 
I^\mathrm{que}(Q) = 
\begin{cases} 
H(Q \mid Q_{\rho,\WM}) & \mbox{if} \;\; 
\lim\limits_{\tr\to\infty} m_{[Q]_\tr} H( \Psi_{[Q]_\tr} \mid \WM) = 0, \\
\infty & \mbox{otherwise}. 
\end{cases}
\end{equation}
\end{theorem}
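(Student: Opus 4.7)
The plan is to reuse the coarse-graining and truncation machinery developed in the proof of Theorem~\ref{thm0:contqLDP}, but to import the corresponding boundary-case \emph{discrete-time} quenched LDPs of \cite{BiGrdHo10} (established there for $\alpha = 1$ and $\alpha = \infty$) in place of the $\alpha \in (1,\infty)$ version used in that proof.

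For part (a), the inequality $I^{\mathrm{que}} \geq I^{\mathrm{ann}}$ is a general fact: from $\Pr_{\mathrm{ann}}(R_N \in A) = \E_\WM[\Pr(R_N \in A \mid X)]$, combining the LDP lower bound on quenched probabilities along a $\WM$-typical set with the annealed LDP upper bound gives $\inf_A I^{\mathrm{ann}} \leq \inf_A I^{\mathrm{que}}$. For the reverse inequality, I would discretise time on a $\delta$-grid, apply the $\alpha = 1$ discrete-time quenched LDP of \cite{BiGrdHo10} (whose rate function already coincides with the annealed one), and pass to the continuous limit via the same truncation and approximation steps as in Section~\ref{proof}. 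Structurally, the vanishing of the prefactor $(\alpha-1)$ in \eqref{def:Ique.tr} makes the concatenation-entropy term disappear; heuristically, when $\alpha=1$ a shift of $X$ by $\exp[N m_Q H(\Psi_Q \mid \WM)]$ has log-cost $- N m_Q H(\Psi_Q \mid \WM)(1+o(1))$, which exactly offsets the entropic gain from conditional relative entropy.

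For part (b), I would import the $\alpha = \infty$ discrete-time quenched LDP of \cite{BiGrdHo10}, whose rate function is $H(Q \mid Q_{\rho,\WM})$ when the stationarised concatenation equals Wiener measure and $+\infty$ otherwise, then transfer to continuous time via the same coarse-graining scheme. After truncation, this yields \eqref{eq:ratefctalphainfty}: the finiteness condition $\lim_{\tr\to\infty} m_{[Q]_\tr} H(\Psi_{[Q]_\tr} \mid \WM) = 0$ expresses the fact that, when $\bar\rho$ decays faster than any polynomial, a shift of $X$ of size $\exp[N m_{[Q]_\tr} H(\Psi_{[Q]_\tr} \mid \WM)]$ has super-exponential log-cost, so the quenched cost can only be finite when no such shift is asymptotically required.

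The main obstacle I anticipate is the interplay between the coarse-graining error and the boundary behaviour of $\alpha$. In (a), one must verify that no spurious $(\alpha-1)$-type correction is generated as $\delta \downarrow 0$, which requires careful control of the entropy-density approximations at the level of precision used in the body of the proof. In (b), the super-polynomial cost of large renewal gaps means the subexponential tolerances usually sufficient for coarse-graining are too coarse: one must use the regularity assumption~\eqref{ass:rhobar.reg0} to prevent discretisation from producing spurious large gaps, and one must verify that the variational problem defining $I^{\mathrm{que}}$ retains lower semi-continuity even though the relevant rate function takes only the values $0$ and $+\infty$ in the auxiliary variable.
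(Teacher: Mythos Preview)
Your overall strategy is sound and would work, but it differs from the paper's route in a way worth noting. The paper does \emph{not} re-run the full coarse-graining machinery with the boundary-case discrete LDPs from \cite{BiGrdHo10} as input, except where strictly necessary. Instead, it leverages the already-proven Theorem~\ref{thm0:contqLDP} (the continuous-time case $\alpha\in(1,\infty)$) via a density-comparison trick analogous to Section~\ref{removeass}: one sandwiches $\bar\rho$ between $(1\pm\delta)\bar\rho'$ for a regularly varying $\bar\rho'$ with exponent $\alpha'$ close to the boundary value, applies Theorem~\ref{thm0:contqLDP} to $\bar\rho'$, and then lets $\alpha'\downarrow 1$ (for part~(a), lower bound) or $\alpha'\uparrow\infty$ (for part~(b), upper bound). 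In part~(a) this immediately kills the $(\alpha'-1)m_Q H(\Psi_Q\mid\WM)$ term in the limit; in part~(b) it forces the rate function to $+\infty$ unless $\lim_{\tr\to\infty} m_{[Q]_\tr}H(\Psi_{[Q]_\tr}\mid\WM)=0$. Only for the \emph{lower} bound in part~(b) does the paper actually go back and re-run (a modification of) the coarse-graining proof, substituting the $\alpha=\infty$ version of Proposition~\ref{thm00:contqLDP} and Corollary~\ref{prop:qLDPhtr} built on \cite[Thm.~1.4(b)]{BiGrdHo10}, which matches what you propose.

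The trade-off: your approach is conceptually uniform (same machinery, different discrete input), but heavier, and your anticipated obstacles about coarse-graining errors at the boundary values of $\alpha$ are genuine nuisances you would have to work through. The paper's comparison argument sidesteps most of this by staying inside the regime $\alpha'\in(1,\infty)$ where everything is already proved, at the modest cost of one extra approximation step (for~(a), showing that $Q$ with $m_Q=\infty$ can be approached by $Q_n\in\mathcal{P}^{\mathrm{inv,fin}}(F^\N)$ with converging annealed entropies, via a smoothed truncation as in Section~\ref{subsect:prop:Ique.tr.cont.part2}).
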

 
\noindent
Theorem~\ref{mainthmboundarycases} is the continuous analogue of Birkner, Greven and den 
Hollander~\cite[Theorem 1.4]{BiGrdHo10} and is proved in Section~\ref{proofalpha1infty}. 

\medskip\noindent
{\bf 2.}
We can also include the case where $\bar{\rho}$ has an exponentially bounded tail: 
\begin{equation}
\label{ass:rhoexp}
\bar{\rho}(t) \leq e^{-\lambda t} \mbox{ for some } \lambda >0 \mbox{ and }
t \mbox{ large enough}.
\end{equation}

\begin{theorem}
\label{thmexp}
Suppose that $\rho$ satisfies {\rm (\ref{ass:rhodensdecay}--\ref{ass:rhobar.reg0})} and 
\eqref{ass:rhoexp}. Then, for $\WM$ a.e.\ $X$, the family $\mathscr{L}(R_N \mid X)$, 
$N\in\N$, satisfies the LDP on $\mathcal{P}^{\mathrm{inv}}(F^\N)$ with rate $N$ and 
with deterministic rate function $I^{\mathrm{que}}(Q)$ given by 
\begin{equation}
\label{eq:ratefctexptail}
I^\mathrm{que}(Q) = \left\{\begin{array}{ll}
H(Q \mid Q_{\rho,\WM})  &\mbox{if } Q \in \cR_\WM,\\
\infty &\mbox{otherwise},
\end{array}
\right.
\end{equation}
where
\begin{equation}
\cR_\WM = \left\{Q \in \mathcal{P}^{\mathrm{inv}}(F^\N)\colon\,
\wlim_{T\to\infty} \frac{1}{T} \int_0^T \delta_{\kappa(Y)} 
\circ (\theta^s)^{-1}\,ds = \WM \;\; \text{for $Q$-a.e.\ $Y$}\right\}.
\end{equation}
\end{theorem}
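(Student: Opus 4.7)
The plan is to deduce Theorem~\ref{thmexp} from Theorem~\ref{mainthmboundarycases}(b), which has just been proved. Indeed, the exponential tail bound \eqref{ass:rhoexp} implies that $\log\bar\rho(t)/\log t \to -\infty$, so \eqref{ass:rhodensdecay} holds with $\alpha = \infty$. Theorem~\ref{mainthmboundarycases}(b) therefore supplies the quenched LDP with a rate function $\tilde I(Q)$ equal to $H(Q\mid Q_{\rho,\WM})$ when $\lim_{\tr\to\infty} m_{[Q]_\tr}H(\Psi_{[Q]_\tr}\mid\WM) = 0$ and $+\infty$ otherwise. The remaining task is to check that, under the stronger assumption \eqref{ass:rhoexp}, $\tilde I$ coincides with the rate function in \eqref{eq:ratefctexptail}.

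The first key step is an integrability bound: under \eqref{ass:rhoexp}, any $Q \in \mathcal{P}^{\mathrm{inv}}(F^\N)$ with $H(Q\mid Q_{\rho,\WM}) < \infty$ satisfies $m_Q < \infty$. This follows from the standard entropy inequality applied with the test function $h(y^{(1)}) = (\lambda/2)\,\tau_1(y^{(1)})$: since $\E_{q_{\rho,\WM}}[e^{(\lambda/2)\tau_1}] < \infty$, we obtain $\tfrac{\lambda}{2}\E_Q[\tau_1] \le H(Q^{(1)}\mid q_{\rho,\WM}) + \log \E_{q_{\rho,\WM}}[e^{(\lambda/2)\tau_1}] < \infty$, hence $m_Q < \infty$. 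Consequently, for those $Q$ where $\tilde I(Q) < \infty$ the stationarised measure $\Psi_Q$ on $C([0,\infty))$ is well-defined via \eqref{eq:PsiQcont}--\eqref{eq:PsiQ}, and in the complementary case $H(Q\mid Q_{\rho,\WM}) = \infty$ both rate functions coincide at $+\infty$.

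Next I identify the truncation-limit condition. Using continuity of specific relative entropy under word-length truncation (the results of Appendix~\ref{contrelentr}), one shows
\begin{equation*}
m_{[Q]_\tr}\,H(\Psi_{[Q]_\tr}\mid\WM) \;\xrightarrow[\tr\to\infty]{}\; m_Q\,H(\Psi_Q\mid\WM)
\end{equation*}
whenever $m_Q < \infty$ (monotone convergence in $\tr$ is natural here, since truncating words only refines the $\sigma$-algebra). Since $m_Q > 0$ by \eqref{ass:rhobar.reg0}, the condition $\lim_\tr m_{[Q]_\tr}H(\Psi_{[Q]_\tr}\mid\WM)=0$ is equivalent to $H(\Psi_Q\mid\WM)=0$, i.e.\ $\Psi_Q = \WM$. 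Finally, using the pathwise ergodic theorem for the $Q$-stationary sequence $Y$ together with the total-variation convergence of Lemma~\ref{lemma:PsiQ:TVlim}, one checks that (for $Q$ with $m_Q<\infty$) the condition $\Psi_Q=\WM$ is equivalent to $Q\in\cR_\WM$: the forward direction comes from the pathwise convergence of the time-averaged occupation measure along $\kappa(Y)$, the reverse direction by uniqueness of the weak limit and ergodic decomposition of non-ergodic $Q$.

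The principal technical obstacle is the truncation-continuity statement for the specific relative entropy of the concatenated path measure, which is not entirely standard because the word-length truncation affects both the mean word length $m_{[Q]_\tr}$ and the law $\Psi_{[Q]_\tr}$ simultaneously. This is precisely why the paper sets up the dedicated Appendices~\ref{entropy}--\ref{contrelentr}; once those tools are available, the matching of $\tilde I$ with the rate function in \eqref{eq:ratefctexptail} is a short assembly. A secondary issue is handling $Q$ with $m_Q = \infty$: in the exponential-tail regime such $Q$ automatically have $H(Q\mid Q_{\rho,\WM}) = \infty$ by the entropy inequality above, so they sit in the infinite branch of both rate functions, and no further argument is needed.
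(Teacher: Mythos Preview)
Your proposal is correct and follows essentially the same route as the paper: deduce the LDP from Theorem~\ref{mainthmboundarycases}(b), use the entropy inequality with the exponential tail to force $m_Q<\infty$ whenever $H(Q\mid Q_{\rho,\WM})<\infty$, invoke the truncation continuity $m_{[Q]_\tr}H(\Psi_{[Q]_\tr}\mid\WM)\to m_Q H(\Psi_Q\mid\WM)$, and identify $\Psi_Q=\WM$ with $Q\in\cR_\WM$ via \eqref{eq:calRchar}. Two small corrections: the truncation-continuity result you need is Lemma~\ref{lem:trcontinuous} in Appendix~\ref{entropy} (not Appendix~\ref{contrelentr}), and your parenthetical heuristic that ``truncating words only refines the $\sigma$-algebra'' is not the right picture---truncation is an image-measure operation, and the actual argument (Lemma~\ref{lem:trcontinuous}, carried to continuous time in the proof of Proposition~\ref{prop:Ique.tr.cont}(1)) goes through a typical-set covering estimate rather than monotone convergence along a filtration.
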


\noindent
Theorem~\ref{thmexp} is the continuous analogue of Birkner~\cite[Theorem 1]{Bi08} and is 
proved in Section~\ref{proofalpha1infty}. On the set $\mathcal{P}^{\mathrm{inv,fin}}(F^\N)$ 
the following holds:
\begin{equation}
\label{eq:calRchar}
\Psi_Q = \WM \quad \mbox{ if and only if } \quad Q \in \cR_\WM. 
\end{equation}
The equivalence in \eqref{eq:calRchar} is the continuous analogue of \cite[Lemma~2]{Bi08} 
(and can be proved analogously). 

\medskip\noindent
{\bf 3.}
By applying the contraction principle we obtain the quenched LDP for single words.
Let $\pi_1\colon\,F^\N \to F$ be the projection onto the first word, and let $\pi_1R_N
= R_N \circ (\pi_1)^{-1}$. 

\begin{corollary}
\label{cor:marginal}
Suppose that $\rho$ satisfies {\rm (\ref{ass:rhodensdecay}--\ref{ass:rhobar.reg0})}.
For $\WM$-a.e.\ $X$, the family $\mathscr{L}(\pi_1 R_N \mid X)$, $N\in\N$, satisfies the 
LDP on $\mathcal{P}(F)$ with rate $N$ and with deterministic rate function $I^\mathrm{que}_1$
given by
\begin{equation}
\label{eq:contractedratefct} 
I^\mathrm{que}_1(q) 
= \inf\big\{ I^\mathrm{que}(Q)\colon\, 
Q \in \mathcal{P}^{\mathrm{inv}}(F^\N),\,\pi_1 Q = q \big\}. 
\end{equation}
This rate function is lower semi-continuous, has compact levels sets, is convex, and has 
a unique zero at $q=q_{\rho,\WM}$.
\end{corollary}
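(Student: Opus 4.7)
The plan is to derive Corollary~\ref{cor:marginal} as a direct application of the contraction principle to the quenched LDP of Theorem~\ref{thm0:contqLDP}. I would introduce the marginal map $\Pi\colon \mathcal{P}^{\mathrm{inv}}(F^\N) \to \mathcal{P}(F)$ defined by $\Pi(Q) = Q \circ \pi_1^{-1}$, so that $\pi_1 R_N = \Pi(R_N)$. The first step is to check continuity of $\Pi$ for the weak topologies: for any bounded continuous $\phi\colon F \to \R$, the composition $\phi \circ \pi_1\colon F^\N \to \R$ is bounded and continuous in the product topology (since $\pi_1$ is a coordinate projection), so $Q_n \to Q$ weakly in $\mathcal{P}^{\mathrm{inv}}(F^\N)$ implies $\int \phi\, d\Pi(Q_n) = \int \phi \circ \pi_1\, dQ_n \to \int \phi\, d\Pi(Q)$. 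Fixing an $X$ in the full Wiener-measure set provided by Theorem~\ref{thm0:contqLDP}, the conditional laws $\mathscr{L}(R_N \mid X)$ satisfy the LDP with good rate function $I^{\mathrm{que}}$, so the contraction principle (Dembo and Zeitouni~\cite{DeZe98}) immediately yields the quenched LDP for $\mathscr{L}(\pi_1 R_N \mid X)$ with rate function given by~\eqref{eq:contractedratefct}.

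Next I would verify the structural properties of $I^{\mathrm{que}}_1$. Lower semi-continuity and compactness of the level sets follow automatically from the contraction principle, since $\{I^{\mathrm{que}}_1 \le c\}$ equals the continuous image $\Pi(\{I^{\mathrm{que}} \le c\})$ of a compact set. Moreover, goodness of $I^{\mathrm{que}}$ together with closedness of the fibers $\{Q\colon \pi_1 Q = q\}$ guarantees that the infimum in~\eqref{eq:contractedratefct} is attained whenever it is finite. For convexity, given $q_0, q_1 \in \mathcal{P}(F)$ and $\lambda \in [0,1]$ with both values finite, pick optimizers $Q_0, Q_1$; then $Q_\lambda = (1-\lambda) Q_0 + \lambda Q_1 \in \mathcal{P}^{\mathrm{inv}}(F^\N)$ satisfies $\pi_1 Q_\lambda = (1-\lambda) q_0 + \lambda q_1$, and the affinity of $I^{\mathrm{que}}$ asserted in Theorem~\ref{thm0:contqLDP} gives $I^{\mathrm{que}}(Q_\lambda) = (1-\lambda) I^{\mathrm{que}}(Q_0) + \lambda I^{\mathrm{que}}(Q_1)$, which yields the convexity inequality upon taking the infimum. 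Finally, $I^{\mathrm{que}}_1(q) = 0$ forces the existence of a minimizer $Q$ with $I^{\mathrm{que}}(Q) = 0$ and $\pi_1 Q = q$, and since the unique zero $Q_{\rho,\WM} = (q_{\rho,\WM})^{\otimes \N}$ of $I^{\mathrm{que}}$ projects to $q_{\rho,\WM}$, the unique zero of $I^{\mathrm{que}}_1$ is $q_{\rho,\WM}$.

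There is no substantive obstacle in this argument; it is a routine packaging of the already-established quenched LDP. The only non-generic input is the affinity of $I^{\mathrm{que}}$ (stated in Theorem~\ref{thm0:contqLDP}), which is what allows one to pull convexity through the projection $\Pi$; everything else follows from the general functional properties of good rate functions under continuous push-forward.
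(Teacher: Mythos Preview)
Your proposal is correct and follows exactly the approach the paper indicates: the corollary is stated immediately after the sentence ``By applying the contraction principle we obtain the quenched LDP for single words,'' with no further proof given. Your write-up simply spells out the details (continuity of $\Pi$, goodness of $I^{\mathrm{que}}$, and the use of affinity to get convexity) that the paper leaves implicit.
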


\noindent
For general $q$ it is not possible to evaluate the infimum in (\ref{eq:contractedratefct}) explicitly. 
For $q$ with $m_q=\E_q[\tau]=\E_{q^{\otimes\N}}[\tau_1]=m_{q^{\otimes\N}}<\infty$ and 
$\Psi_{q^{\otimes\N}}=\WM$, we have $I^\mathrm{que}_1(q)=h(q \mid q_{\rho,\WM})$, the 
relative entropy of $q$ w.r.t.\ 
$q_{\rho,\WM}$. 

\medskip\noindent
{\bf 4.}
We expect assumption \eqref{ass:rhobar.reg0} to be redundant. In any case, it can be relaxed to
(see Section~\ref{prop1}):
\begin{equation} 
\label{ass:rhobar.reg-mg} 
\begin{minipage}{0.85\textwidth} 
$\mathrm{supp}(\rho)= \cup_{i=1}^M [a_i,b_i] \cup [a_{M+1},\infty)$ with $M \in \N$ 
and $0 \leq a_1 < b_1 \leq a_2 < \cdots < b_M \leq a_{M+1}<\infty$, and $\bar{\rho}$ 
is continuous and strictly positive on $\cup_{i=1}^M (a_i,b_i) \cup (a_{M+1},\infty)$ 
and varies regularly near each of the finite endpoints of these intervals. 
\end{minipage}
\end{equation}

\medskip\noindent
{\bf 5.}
It is possible to extend Theorem~\ref{thm0:contqLDP} to other classes of random environments,
as stated in the following theorem whose proof will not be spelled out in the present paper.  

\begin{theorem} 
Theorems~{\rm \ref{thm0:contqLDP}--\ref{thmexp}} and Corollary~{\rm \ref{cor:marginal}} 
carry over verbatim when the Brownian motion $X$ is replaced by a $d$-dimensional L\'evy 
process $\bar{X}$ with the property that $\E[e^{\langle \lambda, \bar{X}_1 \rangle}] < \infty$ for all 
$\lambda \in \R^d$ (where $\langle\cdot\rangle$ denotes the standard inner product), $\WM$ is 
replaced by the law of $\bar{X}$, and in the definition of $F$ in \eqref{eq:defF} continuous paths 
are replaced by c\`adl\`ag paths. 
\end{theorem}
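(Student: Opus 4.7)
The plan is to trace through the entire proof machinery built in Sections~\ref{proof}--\ref{proofalpha1infty} and verify that every invocation of the Brownian structure admits a direct L\'evy analogue under the exponential-moment assumption $\E[e^{\langle \lambda,\bar{X}_1\rangle}]<\infty$ for all $\lambda\in\R^d$. To begin, I would re-define the word space in \eqref{eq:defF} with $C([0,\infty))$ replaced by the Skorohod space $D([0,\infty),\R^d)$ of c\`adl\`ag paths, equipped with a Skorohod-type metric analogous to the one described in Appendix~\ref{metrics}. This space is still Polish, the shift $\theta^s f(t)=f(s+t)-f(s)$ still acts continuously, and all measurability statements in Section~\ref{setting}--\ref{LDPs} go through verbatim. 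The annealed LDP (Theorem~\ref{thm0:contaLDP}) is the abstract i.i.d.\ LDP of Dembo--Zeitouni and has no reference to the Brownian structure beyond the Polish setting, so it transfers directly with $\WM$ replaced by $\mathscr{L}(\bar{X})$.

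For the quenched LDP I would follow exactly the three-proposition scheme laid out in Section~\ref{proof}, now replaying Sections~\ref{props}--\ref{removeass} in the L\'evy setting. The discrete-time quenched LDP of \cite{BiGrdHo10} is stated for abstract i.i.d.\ letters in a Polish space, so one first discretises $\bar{X}$ at a mesh $\Delta>0$ into the i.i.d.\ increments $\bar{X}_{k\Delta}-\bar{X}_{(k-1)\Delta}$ (taking values in $\R^d$), applies \cite{BiGrdHo10} to this letter sequence, and then passes to the continuous scale by the same coarse-graining/truncation procedure used for Brownian motion. The specific relative entropy identities for $[Q]_\tr$ and $\Psi_{[Q]_\tr}$ in Appendices~\ref{entropy}--\ref{contrelentr} use only stationarity, ergodicity and the AMS property of $\kappa$, all of which hold unchanged for any L\'evy process.

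The main obstacle, and the only place where the Brownian structure bites, is in the exponential bounds on probabilities of the form $\Pr\bigl((\bar{X}_{s}-\bar{X}_{0})_{0\le s\le \tr}\in A\bigr)$ for rare events $A$, which enter the coarse-graining upper bound and the construction of the typical $\tr$-long pieces. In the Brownian case these are controlled through Gaussian/Cameron--Martin estimates; in the L\'evy case I would replace them by Cram\'er's theorem in $\R^d$ for the increments at scale $\Delta$ together with a Mogulskii-type path-level LDP, whose validity is exactly guaranteed by $\E[e^{\langle\lambda,\bar{X}_1\rangle}]<\infty$ for all $\lambda$. This yields the same exponential-of-order-$N$ control of the relevant partition sums, and hence the matching upper and lower bounds leading to \eqref{eq:Iquelimitform}--\eqref{def:Ique.tr}. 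Verifying this concentration piece, and checking that the discontinuities allowed by c\`adl\`ag paths do not spoil the continuity properties of $Q\mapsto m_Q H(\Psi_Q\mid \mathscr{L}(\bar{X}))$ used in the lower semi-continuous extension, is where I expect the bulk of the technical work to sit.

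Once the analogue of Theorem~\ref{thm0:contqLDP} is in place, Theorem~\ref{mainthmboundarycases} (the cases $\alpha=1,\infty$) and Theorem~\ref{thmexp} (exponentially bounded $\bar{\rho}$) are obtained by the same soft arguments of Section~\ref{proofalpha1infty}, which only use the already-proven quenched LDP together with the variational structure of $I^{\mathrm{que}}$; similarly Corollary~\ref{cor:marginal} is a contraction-principle application that requires nothing beyond continuity of $\pi_1$ and compactness of sublevel sets. Thus the entire package transfers without any changes to the overall architecture, the only genuinely new input being the L\'evy large-deviation estimates that take over the role played by Brownian concentration in the original proof.
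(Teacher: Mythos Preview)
The paper does not actually prove this theorem: immediately before the statement it says that the proof ``will not be spelled out in the present paper.'' So there is no proof in the paper to compare your proposal against.

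That said, your overall architecture --- re-run the coarse-graining/truncation machinery of Sections~\ref{proof}--\ref{removeass} with the Polish letter space $E_h$ built from c\`adl\`ag increments --- is the natural route and matches the spirit of the paper. However, you have somewhat misidentified where the Brownian structure actually enters. You point to ``Gaussian/Cameron--Martin estimates'' and propose Cram\'er/Mogulskii replacements, but the paper never uses such estimates. The only genuinely Brownian step in the proof of Theorem~\ref{thm0:contqLDP} is Lemma~\ref{lem:expmomentsDsum} in Step~4 of Section~\ref{prop1}: one needs that $\E[\exp(\lambda \sup_{0\le s\le h}|\bar{X}_s|)]<\infty$ and that this quantity tends to $1$ as $h\downarrow 0$ for each fixed $\lambda$. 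In the Brownian case this is obtained via the scaling $D_{1,h}\stackrel{d}{=}\sqrt{h}\,D_{1,1}$; for a L\'evy process with all exponential moments one gets the same conclusion directly from Doob's maximal inequality applied to the submartingales $e^{\langle\mu,\bar{X}_s\rangle}$, with no path-level LDP required. All other Brownian-looking inputs --- the independence of increments used in Appendix~\ref{contrelentr}, in the density computations of Section~\ref{subsect:prop:Ique.tr.cont.part2}, and in the construction of $\lceil Q\rceil_h$ --- use only the L\'evy property (stationary independent increments) and go through verbatim. So your plan is sound, but the ``bulk of the technical work'' you anticipate is lighter than you suggest and sits in a different place.
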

 
\medskip\noindent
{\bf 6.} 
In the companion paper \cite{BidHo13b} we apply Theorem~\ref{thm0:contqLDP} and the techniques
developed in the present paper to the Brownian copolymer. In this model a c\`adl\`ag path, representing 
the configuration of the polymer, is rewarded or penalised for staying above or below a linear interface, 
separating oil from water, according to Brownian increments representing the degrees of hydrophobicity 
or hydrophilicity along the polymer. The reference measure for the path can be either the Wiener 
measure or the law of a more general L\'evy process. We derive a variational formula for the quenched free energy, from 
which we deduce a variational formula for the slope of the quenched critical line. This critical line 
separates a \emph{localized phase} (where the copolymer stays close to the interface) from a 
\emph{delocalized phase} (where the copolymer wanders away from the interface). This slope has 
been the object of much debate in recent years. The Brownian copolymer is the unique attractor in the 
limit of weak interaction for a whole universality class of discrete copolymer models. See Bolthausen 
and den Hollander~\cite{BodHo97}, Caravenna and Giacomin~\cite{CaGi10}, Caravenna, Giacomin 
and Toninelli~\cite{CaGiTo12} for details.


\section{Proof of Theorem~\ref{thm0:contqLDP}}
\label{proof}

The proof proceeds via a \emph{coarse-graining} and \emph{truncation} argument. In 
Section~\ref{cogrtrun} we set up the coarse-graining and the truncation, and state 
a quenched LDP for this setting that follows from the quenched LDP in \cite{BiGrdHo10} 
and serves as the starting point of our analysis (Proposition~\ref{thm00:contqLDP} and
Corollary~\ref{prop:qLDPhtr} below). In Section~\ref{3prop} we state three propositions
(Propositions~\ref{prop:LambdaPhilimit1tr}--\ref{prop:Ique.tr.cont} below), involving 
expectations of exponential functionals of the coarse-grained truncated empirical process
as well as approximation properties of the associated rate function, and we use these 
propositions to complete the proof of Theorem~\ref{thm0:contqLDP} with the help of 
Bryc's inverse of Varadhan's lemma. In Section~\ref{3lem} we state and prove two lemmas 
that are used in Section~\ref{3prop}, involving approximation estimates under the 
coarse-graining. The proof of the three propositions is deferred to 
Sections~\ref{props}--\ref{removeass}.


\subsection{Preparation: coarse-graining and truncation}
\label{cogrtrun}


\subsubsection{Coarse-graining}
\label{cogr}

Suppose that, instead of the absolutely continuous $\rho$ introduced in 
Section~\ref{setting}, we are given a discrete 
$\hat{\rho}$ with $\mathrm{supp}(\hat\rho) \subset h \N$ for 
some $h>0$. Let 
\begin{equation} 
\label{def:Eh}
E_h = \{ f \in C([0,h])\colon\,f(0)=0 \}.
\end{equation} 
Path pieces of length $h$ in a continuous-time scenario can act as ``letters'' in a 
discrete-time scenario, and therefore we can use the results from \cite{BiGrdHo10}. 
Note that $(E_h)^\N$ as a metric space is isomorphic to $\{f \in C([0,\infty))\colon\,f(0)=0\}$ 
via the obvious glueing together of path pieces into a single path, provided the latter is 
given a suitable metric that metrises locally uniform convergence. Similarly, we can identify 
$\mathcal{P}^{\mathrm{inv}}(E_h^\N)$ with 
\begin{equation}
\mathcal{P}^{h\text{-}\mathrm{inv}}(C([0,\infty))) 
= \big\{ Q \in \mathcal{P}(C([0,\infty))) \colon\, Q = Q \circ (\theta^{h})^{-1} \big\}, 
\end{equation}
which is the set of laws on continuous paths that are invariant under a time shift by 
$h$. Note that the set 
\begin{equation} 
\label{def:Fh}
F_h = \bigcup_{t \in h\N} \Big(\{t\} \times 
\big\{ f \in C([0,\infty))\colon\,f(0)=0, f(s)=f(t) \; \text{for} \; s > t \big\}\Big)
\end{equation}
is isomorphic to $\widetilde{E_h} = \cup_{n \in \N} \left(E_h\right)^n$ via the map
$\iota_h\colon\, F_h \to \widetilde{E_h}$ defined by 
\begin{equation}
\label{iotahdef} 
\iota_h\big( (nh, f)\big) = \Big( \big(f\big((\,\cdot+(i-1)h) \wedge ih\big)
-f((i-1)h)\big)\Big)_{i=1,\dots,n}, \qquad (nh, f) \in F_h. 
\end{equation}

For $Q \in \mathcal{P}^{\mathrm{inv, fin}}(F_h^\N)$, define 
\begin{equation}
\label{eq:definitionPsiQh}
\Psi_{Q,h}(A) = \frac1{m_Q} \E_Q\left[ \sum_{i=0}^{\tau_1-1} 
\1_A\big(\theta^i \iota_h \kappa(Y)\big)\right] 
= \frac1{h \, m_Q} \E_Q\left[ \int_0^{h \tau_1} 
\1_A\big( \kappa(Y)(h \lfloor u/h\rfloor +s))_ {s \geq 0} \big) \, du\right] 
\end{equation}
for $A \subset C([0,\infty))$ measurable, where $\tau_1$ is the length of the first word 
(counted in letters, so that the length of the first word viewed as an element of 
$F_h$ is $h\tau_1$) 
and $\theta$ is the left-shift acting on $(E_h)^\N$. The right-most 
expression in \eqref{eq:definitionPsiQh} can be viewed as a coarse-grained version of 
(\ref{eq:PsiQcont}). The following coarse-grained version of the quenched LDP serves 
as our starting point.

\begin{proposition}
\label{thm00:contqLDP}
Fix $h>0$. Suppose that $\mathrm{supp}(\hat\rho) \subset h\N$ and $\lim_{n\to\infty} 
\log \hat\rho(\{nh\})/\log n = -\alpha$ with $\alpha \in (1,\infty)$. Then, for $\WM$ a.e.\ 
$X$, the family $\mathscr{L}(R_N \mid X)$, $N\in\N$, satisfies the LDP on 
$\mathcal{P}^{\mathrm{inv}}((\widetilde{E_h})^\N)$ with rate $N$ and with 
deterministic rate function given by
\begin{equation} 
\label{eq:ratefctfixedh}
I^{\mathrm{que}}_h(Q) = H(Q \mid Q_{\hat\rho,\WM}) 
+ (\alpha-1) m_Q H(\Psi_{Q,h} \mid \WM), 
\qquad Q \in \mathcal{P}^{\mathrm{inv,fin}}((\widetilde{E_h})^\N),
\end{equation}
and 
\begin{equation}
I^{\mathrm{que}}_h(Q) = \lim_{\tr\to\infty} I^{\mathrm{que}}_h([Q]_\tr),
\qquad Q \notin \mathcal{P}^{\mathrm{inv,fin}}((\widetilde{E_h})^\N),
\end{equation} 
where $Q_{\hat\rho,\WM}=(q_{\hat\rho,\WM})^{\otimes\N}$ with $q_{\hat\rho,\WM}$ 
defined as in \eqref{qrhoWdef}, and $\Psi_{Q,h}$ defined via \eqref{eq:definitionPsiQh}.
\end{proposition}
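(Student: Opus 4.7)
The plan is to reduce Proposition~\ref{thm00:contqLDP} directly to the discrete-time quenched LDP of \cite{BiGrdHo10} via the isomorphism $\iota_h$ of \eqref{iotahdef}. Under the hypothesis $\mathrm{supp}(\hat\rho)\subset h\N$, every renewal time $T_i$ lies in $h\N$, so the cutting of $X$ into words is determined by the partition, induced by the renewal process, of the sequence of $h$-letters
\[
\xi_i = \big(X_{(i-1)h+s}-X_{(i-1)h}\big)_{s\in[0,h]}, \qquad i\in\N,
\]
which, by the independence of Brownian increments, is i.i.d.\ on the Polish space $E_h$ with common law $\nu_h$ equal to the push-forward of $\WM$ onto $E_h$. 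Rescaling the renewal law to the positive integers via $\tilde\rho(\{k\}):=\hat\rho(\{kh\})$ we recover the discrete-time setup of \cite{BiGrdHo10}, with $\tilde\rho$ inheriting the tail exponent $\alpha$.

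I would then invoke the discrete-time quenched LDP of \cite{BiGrdHo10} with Polish alphabet $E_h$, letter law $\nu_h$ and renewal law $\tilde\rho$; the proof in \cite{BiGrdHo10} is alphabet-insensitive and extends to general Polish letter spaces. This yields, for $\nu_h^{\otimes\N}$-a.e.\ realisation of $(\xi_i)_{i\in\N}$ (equivalently, $\WM$-a.e.\ $X$), an LDP on $\mathcal{P}^{\mathrm{inv}}((\widetilde{E_h})^\N)$ for $\mathscr{L}(R_N\mid X)$ with rate function
\[
\widetilde I_h(Q) \;=\; H\big(Q \,\big|\, Q_{\tilde\rho,\nu_h}\big) + (\alpha-1)\,m_Q^{\mathrm{disc}}\,H\big(\Psi_Q^{\mathrm{disc}}\,\big|\,\nu_h^{\otimes\N}\big), \qquad Q\in\mathcal{P}^{\mathrm{inv,fin}}((\widetilde{E_h})^\N),
\]
together with the truncation extension to general $Q$. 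Here $m_Q^{\mathrm{disc}}=\E_Q[\tau_1]/h$ is the mean word length measured in letters and $\Psi_Q^{\mathrm{disc}}$ is the stationarised concatenation on $E_h^\N$ as defined in \cite{BiGrdHo10}.

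The next step is to rewrite $\widetilde I_h$ in the form \eqref{eq:ratefctfixedh}. The map $\iota_h$ identifies $Q_{\tilde\rho,\nu_h}$ on $(\widetilde{E_h})^\N$ with $Q_{\hat\rho,\WM}$ on $F_h^\N$, so $H(Q\mid Q_{\tilde\rho,\nu_h})=H(Q\mid Q_{\hat\rho,\WM})$. For the second term, the bijection $E_h^\N\cong C([0,\infty))$ obtained by gluing consecutive $h$-pieces identifies $\nu_h^{\otimes\N}$ with $\WM$ (again by independence of Brownian increments), and unravelling \eqref{eq:definitionPsiQh} shows that it identifies $\Psi_Q^{\mathrm{disc}}$ with $\Psi_{Q,h}$. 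The finite-marginal entropies then satisfy
\[
H\big(\Psi_Q^{\mathrm{disc}}|_{E_h^n} \,\big|\,\nu_h^{\otimes n}\big) \;=\; H\big(\Psi_{Q,h}|_{[0,nh]} \,\big|\,\WM|_{[0,nh]}\big),
\]
so normalising by $n$ versus by $nh$ gives $H(\Psi_Q^{\mathrm{disc}}\mid\nu_h^{\otimes\N}) = h\,H(\Psi_{Q,h}\mid\WM)$, where the right-hand side is the specific relative entropy per unit of time (this is the convention used in \eqref{eq:Ique} and compatible with the appendix on specific relative entropy). Combined with $m_Q=h\,m_Q^{\mathrm{disc}}$, this produces the identity $m_Q^{\mathrm{disc}}\,H(\Psi_Q^{\mathrm{disc}}\mid\nu_h^{\otimes\N})=m_Q\,H(\Psi_{Q,h}\mid\WM)$, so $\widetilde I_h=I_h^{\mathrm{que}}$ on $\mathcal{P}^{\mathrm{inv,fin}}((\widetilde{E_h})^\N)$.

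Finally, the extension to $Q\notin\mathcal{P}^{\mathrm{inv,fin}}$ via $\lim_{\tr\to\infty}I_h^{\mathrm{que}}([Q]_\tr)$ is inherited from \cite{BiGrdHo10} because truncation commutes with $\iota_h$ and because both terms of $I_h^{\mathrm{que}}([Q]_\tr)$ are monotone non-decreasing in $\tr$ (specific relative entropy is non-decreasing under refinement of the $\sigma$-algebra, and $m_{[Q]_\tr}$ also increases). The main technical point, and the only real obstacle, is checking that the arguments of \cite{BiGrdHo10} go through with the uncountable Polish alphabet $E_h$; once this is granted, everything else is bookkeeping through $\iota_h$ and the scaling relation just derived.
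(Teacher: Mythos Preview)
Your approach is essentially the same as the paper's: both reduce the claim to the discrete-time quenched LDP of \cite{BiGrdHo10} by taking $E_h$ as the letter space and using the isomorphism $\iota_h$; the paper simply cites \cite[Corollary~1.6]{BiGrdHo10} and notes that $\widetilde{E_h}=\iota_h(F_h)$, while you spell out the bookkeeping (the scaling $m_Q^{\mathrm{disc}}\,H(\Psi_Q^{\mathrm{disc}}\mid\nu_h^{\otimes\N})=m_Q\,H(\Psi_{Q,h}\mid\WM)$ and the truncation extension) in more detail. Your one stated worry---whether \cite{BiGrdHo10} covers uncountable Polish alphabets---is not actually an obstacle: that is precisely the content of \cite[Corollary~1.6]{BiGrdHo10}, which the paper invokes directly.
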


\begin{proof}
The claim follows from \cite[Corollary 1.6]{BiGrdHo10}  by using $E_h$ as letter 
space and observing that $\widetilde{E_h}=\iota_h(F_h)$. Note that $F_h^\N$ 
is a closed subspace of $F^\N$. Since $\mathrm{supp}(\hat\rho) \subset h\N$ by assumption, 
we have $I^{\mathrm{que}}_h(Q) \geq  H(Q \mid Q_{\hat\rho,\WM}) = \infty$ for any 
$Q \in \mathcal{P}^{\mathrm{inv}}(F^\N)$ with $Q\big(F^\N \setminus F_h^\N\big)>0$.
Therefore we can consider the random variable $R_N$ as taking values in 
$\mathcal{P}^{\mathrm{inv}}((\widetilde{E_h})^\N)$, $\mathcal{P}^{\mathrm{inv}}
(F_h^\N)$ or $\mathcal{P}^{\mathrm{inv}}(F^\N)$, without changing the statement 
of Proposition~\ref{thm00:contqLDP}. Note that $I^{\mathrm{que}}_h$ is finite only 
on $\mathcal{P}^{\mathrm{inv}}(F_h^\N)\subset\mathcal{P}^{\mathrm{inv}}(F^\N)$.
\end{proof}

We want to pass to the limit $h \downarrow 0$ and deduce Theorem~\ref{thm0:contqLDP} 
from Proposition~\ref{thm00:contqLDP}. However, an immediate application of a projective 
limit at the level of letters appears to be impossible. Indeed, when we replace 
$h$ by $h/2$, each ``$h$-letter'' turns into two ``$(h/2)$-letters'', so the word 
length changes, and even diverges as $h \downarrow 0$. This does not fit well with 
the way the projective limit was set up in \cite[Section 8]{BiGrdHo10}, where the 
internal structure of the letters was allowed to become increasingly richer, but 
the word length had to remain the same. In some sense, the problem is that we have 
finite words but only infinitesimal letters (i.e., there is no fixed letter space). 
To remedy this, we proceed as follows. For fixed discretisation length $h>0$ we 
have a fixed letter space, and so Proposition~\ref{thm00:contqLDP} applies. We will 
handle the limit $h \downarrow 0$ via Bryc's inverse of Varadhan's lemma. This will
require several intermediate steps.


\subsubsection{Truncation}
\label{trun}

It will be expedient to work with a \emph{truncated} version of Proposition~\ref{thm00:contqLDP}.  
For $h>0$, let $\lceil t \rceil_h =h\lceil t/h \rceil$ for $t \in (0,\infty)$ and put $\lceil\rho\rceil_h
=\rho\,\circ (\lceil\cdot\rceil_h)^{-1}$, i.e., 
\begin{equation} 
\label{def:rho.h.trunc}
\lceil \rho \rceil_h = \sum_{i\in\N} w_{h,i} \delta_{ih} \, \in \mathcal{P}(h\N) 
\subset \mathcal{P}((0,\infty)),
\end{equation}
where 
\begin{equation}
w_{h,i} = \rho\big(((i-1)h,ih]\big) = \int_{(i-1)h}^{ih} \bar{\rho}(x)dx 
\end{equation}
is the coarse-grained version of $\rho$ from Section~\ref{setting}. It is easily checked that 
\eqref{ass:rhodensdecay} implies
\begin{equation} 
\lim_{n\to\infty} \frac{\log \lceil \rho\rceil_h(\{nh\})}{\log n} = -\alpha.
\end{equation}
Write$ \mathscr{L}_{\lceil \rho \rceil_h}([R_N]_\tr \mid X)$ for the law of the
truncated empirical process $[R_N]_\tr$ conditional on $X$ when the $\tau_i$'s are 
drawn according to $\lceil \rho \rceil_h$.

\begin{corollary}
\label{prop:qLDPhtr}
For $\WM$-a.e.\ $X$, the family $\mathscr{L}_{\lceil \rho \rceil_h}([R_N]_\tr \mid X)$, 
$N\in\N$, satisfies the LDP on $\mathcal{P}^\mathrm{inv}(F_h^\N)$ with 
rate $N$ and with deterministic rate function given by
\begin{equation} 
\label{eq:Ique.h.tr}
I^{\mathrm{que}}_{h,\tr}(Q) 
=  H\bigl(Q \mid Q_{\lceil\rho\rceil_h,\WM,\tr}\bigr) 
+ (\alpha-1) m_Q H(\Psi_{Q,h} \mid \WM)
\end{equation} 
with $Q_{\lceil\rho\rceil_h,\WM,\tr} = ([q_{\lceil \rho \rceil_h, \WM}]_\tr)^{\otimes \N}$.
\end{corollary}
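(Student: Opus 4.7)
The plan is to deduce the corollary from Proposition~\ref{thm00:contqLDP} by applying the contraction principle to the truncation map, and then identifying the resulting variational rate function with the claimed closed-form expression.

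\textbf{Step 1 (starting point).} I verify that $\lceil\rho\rceil_h$ meets the hypotheses of Proposition~\ref{thm00:contqLDP}: by construction $\mathrm{supp}(\lceil\rho\rceil_h)\subset h\N$, and the tail condition $\log\lceil\rho\rceil_h(\{nh\})/\log n \to -\alpha$ is inherited from \eqref{ass:rhodensdecay} after a short calculation using $w_{h,i}=\int_{(i-1)h}^{ih}\bar\rho(x)\,dx$. Hence, for $\WM$-a.e.~$X$, the family $\mathscr{L}_{\lceil\rho\rceil_h}(R_N\mid X)$ satisfies the LDP on $\mathcal{P}^{\mathrm{inv}}((\widetilde{E_h})^\N)$ with rate function $I^{\mathrm{que}}_h$.

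\textbf{Step 2 (contraction).} I check that the coordinatewise truncation $[\cdot]_\tr : F_h^\N \to F_h^\N$ is continuous with respect to the Skorohod-type product topology of Appendix~\ref{metrics}, and therefore induces a continuous map on $\mathcal{P}^{\mathrm{inv}}(F_h^\N)$ in the weak topology. The contraction principle then yields the LDP for $\mathscr{L}_{\lceil\rho\rceil_h}([R_N]_\tr\mid X)$ with rate function
\begin{equation*}
\tilde I(Q) \;=\; \inf\bigl\{\, I^{\mathrm{que}}_h(Q') \,:\, Q'\in\mathcal{P}^{\mathrm{inv}}((\widetilde{E_h})^\N),\ [Q']_\tr=Q\,\bigr\}.
\end{equation*}

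\textbf{Step 3 (identifying the infimum).} I show $\tilde I(Q)=I^{\mathrm{que}}_{h,\tr}(Q)$ by constructing a minimiser and bounding the infimum below. For the upper bound, take the \emph{reference extension} $Q^*$: sample $[Y^{(i)}]_\tr$ from the marginals of $Q$, and when $\tau_i([Y^{(i)}])=\tr$ extend the word past $\tr$ using the conditional distribution of $Q_{\lceil\rho\rceil_h,\WM}$ given $\tau\ge\tr$ and the boundary value $f(\tr)$. The chain rule for specific relative entropy (Appendix~\ref{entropy}) gives $H(Q^*\mid Q_{\lceil\rho\rceil_h,\WM})=H(Q\mid Q_{\lceil\rho\rceil_h,\WM,\tr})$. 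The concatenation $\kappa(Y)$ under $Q^*$ consists of the truncated $Q$-segments interleaved with independent Brownian fillers; since Brownian increments contribute zero to the specific relative entropy with respect to $\WM$, a scaling argument yields $m_{Q^*}H(\Psi_{Q^*,h}\mid\WM)=m_Q H(\Psi_{Q,h}\mid\WM)$. For the lower bound, the data processing inequality for specific relative entropy provides $H(Q'\mid Q_{\lceil\rho\rceil_h,\WM})\ge H(Q\mid Q_{\lceil\rho\rceil_h,\WM,\tr})$ for every admissible $Q'$, and a parallel monotonicity estimate bounds $m_{Q'}H(\Psi_{Q',h}\mid\WM)$ from below by $m_Q H(\Psi_{Q,h}\mid\WM)$ by projecting the concatenation onto its truncated-segment marginals.

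\textbf{Main obstacle.} The delicate part is not the contraction principle itself but the precise bookkeeping in Step~3 for the second term $m_{Q'}H(\Psi_{Q',h}\mid\WM)$: showing it is invariant under the reference extension and nondecreasing under projection requires the specific-relative-entropy identities collected in Appendix~\ref{entropy}, in particular the behaviour of $H(\cdot\mid\WM)$ when a stationary process is enlarged by the insertion of independent Brownian pieces of independent random length. Subject to those identities, the proof reduces to the routine application of contraction outlined above.
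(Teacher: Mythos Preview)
Your Steps 1 and 2 are exactly the paper's first route: apply Proposition~\ref{thm00:contqLDP} with $\hat\rho=\lceil\rho\rceil_h$ and then contract through $[\cdot]_\tr$. The paper stops there; your Step 3 is where you add something, and that is where the gap sits.

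The upper bound via the reference extension $Q^*$ is fine. The lower bound is not. You claim that for every $Q'$ with $[Q']_\tr=Q$ one has $m_{Q'}H(\Psi_{Q',h}\mid\WM)\ge m_Q H(\Psi_{Q,h}\mid\WM)$ ``by projecting the concatenation onto its truncated-segment marginals''. But this projection is \emph{not} a shift-equivariant map on the letter process $(E_h)^\N$ alone: to delete the letters past position $\tr$ in each word you must know the word boundaries, which are not encoded in $\Psi_{Q'}$. Hence no direct data-processing argument applies. The only related tool in the appendix is Lemma~\ref{lem:trcontinuous}, and its insertion/covering argument yields the desired inequality only under the restriction $\E_{Q'}[(|Y^{(1)}|-\tr)_+]<\tfrac{\varepsilon}{2}m_{Q'}$, i.e.\ only when the truncation removes an $\varepsilon$-fraction of letters. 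For a generic $Q'$ in the infimum (which may have $m_{Q'}\gg m_Q$, even $m_{Q'}=\infty$) that hypothesis fails, and the combinatorial overcounting in the insertion estimate blows up. So your ``parallel monotonicity estimate'' is not available from Appendix~\ref{entropy} as stated.

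The paper sidesteps this entirely. Rather than computing the contraction infimum, it invokes the alternative route: rerun the proofs of \cite[Theorem~1.2 and Corollary~1.6]{BiGrdHo10} directly in the truncated setting (words of length at most $\tr/h$ in $E_h$-letters, so $m_Q<\infty$ automatically and the closed form \eqref{eq:Ique.h.tr} is the output of that proof, not a separate identification). Uniqueness of good rate functions then forces the contraction formula to coincide with \eqref{eq:Ique.h.tr}. If you want to salvage your direct approach, you would need to prove the monotonicity of $Q'\mapsto m_{Q'}H(\Psi_{Q',h}\mid\WM)$ under truncation without the smallness assumption of Lemma~\ref{lem:trcontinuous}; this is plausible but is a separate lemma, not a consequence of anything already in the paper.
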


\begin{proof}
This follows from Proposition~\ref{thm00:contqLDP} and the contraction principle. 
Alternatively, it follows from the proofs of \cite[Theorem 1.2 and Corollary 1.6]{BiGrdHo10}.
\end{proof}

\noindent
Note that $I^{\mathrm{que}}_{h,\tr}(Q)=\infty$ when under $Q$ the word lengths are 
not supported on $h\N \cap (0,\tr]$.


\subsection{Application of Bryc's inverse of Varadhan's lemma}
\label{3prop}

In this section we state three propositions 
(Propositions~\ref{prop:LambdaPhilimit1tr}--\ref{prop:Ique.tr.cont} below)
and show that these imply Theorem~\ref{thm0:contqLDP}. The proof of
these propositions is deferred to Sections~\ref{props}--\ref{removeass}.


\subsubsection{Notations}
\label{subsect:notations}

In what follows we obtain the quenched LDP for the truncated empirical process 
$[R_N]_{\tr}$ by letting $h \downarrow 0$ in the coarse-grained and truncated 
empirical process $[R_{N,h}]_\tr$ with $\tr \in \N$ fixed (for a precise definition,
see \eqref{def:RNh} in Section~\ref{prop1}) and afterwards letting $\tr\to\infty$. 
(We assume that $\tr \in \N$ and $h=2^{-M}$ for some $M \in \N$, in particular, 
$\tr$ is an integer multiple of $h$.) 

In the coarse-graining procedure, it may happen that a very short continuous 
word $y=(t,f) \in F$ disappears, namely, when $0<t<h$. We remedy this by formally 
allowing ``empty'' words, i.e., by using 
\begin{align} 
\label{def:Fhat}
\widehat{F} = F \cup \big\{ (0,0) \big\} 
= \bigcup_{t \geq 0} \Big(\{t\} \times 
\big\{ f \in C([0,\infty))\colon\,f(0)=0, f(s)=f(t) \; \text{for} \; s > t \big\}\Big)
\end{align}
as word space instead of $F$. The metric on $F$ defined in Appendix~\ref{metrics} 
extends in the obvious way to $\widehat{F}$.

Before we proceed, we impose \emph{additional regularity assumptions} on $\bar{\rho}$ 
that will be required in the proof of Proposition~\ref{prop:LambdaPhilimit1tr}. Recall from
\eqref{ass:rhobar.reg0} that $\mathrm{supp}(\rho) = [s_*,\infty)$. Let
\begin{align}
\label{eq:Vbarrhodef} 
V_{\bar{\rho}}(t,h) = 
\sup_{v \in (0,2h)} \left| \log 
\frac{\int_t^{t+h} \bar{\rho}(s)\,ds}{\int_{t+v}^{t+h+v} \bar{\rho}(s)\,ds} \right|,
\qquad t,h>0.
\end{align}
We assume that there exist monotone sequences $(\eta_n)_{n \in \N}$ and $(A_n)_{n\in\N}$,
with $\eta_n \in (0,1)$ and $A_n \subset (s_*,\infty)$ satisfying $\lim_{n\to\infty} \eta_n = 0$ 
and $\lim_{n\to\infty} A_n = (s_*,\infty)$, such that $(s_*,\infty) \setminus A_n$ is a (possibly 
empty) union of finitely many bounded intervals whose endpoints lie in $2^{-n} \N_0$, and
\begin{align} 
\label{ass:rhobar.reg2}
\sup_{t \in A_n} V_{\bar{\rho}}(t,2^{-n}) \leq \eta_n \qquad \forall\,n \in\N. 
\end{align}
In addition, we assume that there exists an $\eta_0 < \infty$ such that
\begin{align} 
\label{ass:rhobar.reg1}
\sup_{n \in \N} \sup_{t \in (s_*,\infty)} V_{\bar{\rho}}(t,2^{-n}) \leq \eta_0. 
\end{align}
These assumptions will be removed only in Section~\ref{removeass}. Note that 
\eqref{ass:rhobar.reg2}--\eqref{ass:rhobar.reg1} are satisfied when $\bar{\rho}$ 
is continuous and strictly positive on $(s_*,\infty)$ and varies regularly near
$s_*$ and at $\infty$.


\subsubsection{Proof of Theorem~\ref{thm0:contqLDP} subject to 
(\ref{ass:rhobar.reg2}--\ref{ass:rhobar.reg1}) and three propositions}

\begin{proof}
A function $g$ on $\widehat{F}^\ell$ is Lipschitz when it satisfies 
\begin{align} 
\label{eq:g_Lipschitz}
\big| g(y^{(1)},\dots,y^{(\ell)}) - g(y^{(1)}{}',\dots,y^{(\ell)}{}') \big| 
\leq C_g \sum_{j=1}^\ell d_F(y^{(j)},y^{(j)}{}')
\quad \mbox{ for some } C_g < \infty. 
\end{align}
Consider the class $\mathscr{C}$ of functions $\Phi\colon\,\mathcal{P}(\widehat{F}^\N) \to \R$ 
of the form 
\begin{equation} 
\label{eq:Phiform1}
\Phi(Q) = \int_{\widehat{F}^{\ell_1}} g_1 \, d\pi_{\ell_1} Q \wedge \cdots \wedge 
\int_{\widehat{F}^{\ell_m}} g_m \, d\pi_{\ell_m}Q, 
\quad Q \in \mathcal{P}^{\mathrm{inv}}(\widehat{F}^\N), 
\end{equation}
where $m \in N$, $\ell_1,\dots, \ell_m \in \N$, and $g_i$ is a bounded Lipschitz function 
on $\widehat{F}^{\ell_i}$ for $i=1,\dots,m$. This class is well-separating and thus is sufficient 
for the application of Bryc's lemma (see Dembo and Zeitouni~\cite[Section 4.4]{DeZe98}. 

Our first proposition identifies the exponential moments of $[R_N]_\tr$.  
\begin{proposition} 
\label{prop:LambdaPhilimit1tr}  
The families $\mathscr{L}(R_N \mid X)$, $N\in\N$, and 
$\mathscr{L}([R_N]_{\tr} \mid X)$, 
$\tr \in \N$, are exponentially tight $X$-a.s. Moreover, for $\Phi \in \mathscr{C}$,
\begin{equation} 
\label{eq:LambdaPhilimit1tr}
\Lambda_{0,\tr}(\Phi) 
= \lim_{N\to\infty} \frac1N \log \E\Big[ \exp\big( N \Phi([R_N]_{\tr})\big) ~\Big|~ X \Big] 
= \lim_{h \downarrow 0} \Lambda_{h,\tr}(\Phi) \quad 
\text{exists $X$-a.s.},
\end{equation}
where $\Lambda_{h,\tr}$ is the generalised convex transform of $I^{\mathrm{que}}_{h,\tr}$
given by
\begin{equation}
\label{eq:Phihtrform} 
\Lambda_{h,\tr}(\Phi) 
= \sup_{Q \in \mathcal{P}^{\mathrm{inv, fin}}((\widetilde{E_h})^\N)} 
\big\{ \Phi(Q) - I^{\mathrm{que}}_{h,\tr}(Q) \big\}. 
\end{equation}
Furthermore, for $\Phi\in \mathscr{C}$, 
\begin{equation} 
\label{eq:LambdaPhilimit3}
\Lambda(\Phi) 
= \lim_{N\to\infty} \frac1N \log \E\Big[ \exp\big( N \Phi(R_N)\big) ~\Big|~ X \Big] 
= \lim_{\tr\to\infty} \Lambda_{0,\tr}(\Phi) \quad 
\text{exists $X$-a.s.}
\end{equation}
\end{proposition}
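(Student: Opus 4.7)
The plan is to verify the hypotheses of Bryc's inverse of Varadhan's lemma applied to the conditional laws $\mathscr{L}([R_N]_\tr\mid X)$ and $\mathscr{L}(R_N\mid X)$: namely, exponential tightness plus the existence of the limits of log-exponential functionals along a well-separating class $\mathscr{C}$. Exponential tightness in the continuous setting I would obtain from (i) an Arzel\`a–Ascoli-type criterion on $\widehat{F}$ translated into moment bounds on moduli of continuity of Brownian paths, and (ii) a renewal tail estimate from (\ref{ass:rhodensdecay}) controlling the number of renewals in large windows. For $[R_N]_\tr$ the word lengths are bounded by $\tr$, so only the path part requires control; for $R_N$ one additionally controls long words via the renewal tail. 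The usual uniform exponential moment bound on a quenched Brownian environment then yields, $\WM$-a.s., compact sets $K_\delta$ with $\Pr([R_N]_\tr \notin K_\delta\mid X)\le e^{-N/\delta}$.

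For the identity \eqref{eq:LambdaPhilimit1tr} I would combine Corollary~\ref{prop:qLDPhtr} with Varadhan's lemma applied to bounded continuous $\Phi$, which for each fixed $h$ gives
\begin{equation*}
\lim_{N\to\infty}\frac{1}{N}\log\E_{\lceil\rho\rceil_h}\bigl[\exp\bigl(N\Phi([R_{N,h}]_\tr)\bigr)\bigm|X\bigr]=\Lambda_{h,\tr}(\Phi)\qquad X\text{-a.s.}
\end{equation*}
The real content is the exchange between the original $\rho$-process and its coarse-graining $\lceil\rho\rceil_h$. Writing the $X$-conditional expectation as an integral over $(t_1,\dots,t_N)$ weighted by $\prod_{i=1}^N \bar{\rho}(t_i-t_{i-1})$ and comparing it with the analogous $\lceil\rho\rceil_h$-sum, the weight ratio on $\{\tau_i\le\tr\}$ is controlled uniformly by $V_{\bar{\rho}}(\cdot,h)$; the regularity bounds (\ref{ass:rhobar.reg2}--\ref{ass:rhobar.reg1}) make this ratio $\exp(o(N))$ uniformly in $h$ small. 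Simultaneously, moving each renewal point by at most $h$ perturbs each Lipschitz $g_i$ in \eqref{eq:Phiform1} by $O(h)$, so $\Phi([R_N]_\tr)-\Phi([R_{N,h}]_\tr)=o(1)$ uniformly. These two approximation estimates, supplied by the two lemmas advertised in Section~\ref{3lem}, combine to give \eqref{eq:LambdaPhilimit1tr}; the existence of $\Lambda_{0,\tr}(\Phi)$ as a deterministic quantity emerges from the fact that the $h$-limit of the $X$-a.s.\ quantities $\Lambda_{h,\tr}(\Phi)$ is itself deterministic.

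For the second identity \eqref{eq:LambdaPhilimit3} I would argue that, because each $g_i$ in \eqref{eq:Phiform1} is bounded Lipschitz on $\widehat{F}^{\ell_i}$, the truncation error $|\Phi(R_N)-\Phi([R_N]_\tr)|$ is controlled by the empirical fraction $\frac{1}{N}\#\{i\le N:\tau_i>\tr\}$ plus a uniform modulus of continuity on the path components. Both are exponentially negligible as $\tr\to\infty$: the first because of the Cram\'er-type bound $\Pr(\tau_1>\tr)\le e^{-c(\tr)}$ with $c(\tr)\to\infty$ built into the assumption that $\Phi$ only sees bounded Lipschitz features, and the second by standard Brownian modulus estimates. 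A Hölder inequality then upgrades these estimates to
\begin{equation*}
\limsup_{N\to\infty}\frac{1}{N}\log\E\bigl[e^{N|\Phi(R_N)-\Phi([R_N]_\tr)|}\bigm|X\bigr]\xrightarrow{\tr\to\infty}0\qquad X\text{-a.s.},
\end{equation*}
which combined with the previous step produces the existence and value of $\Lambda(\Phi)$.

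The main obstacle will be the $h\downarrow 0$ step. Since the effective letter space in Corollary~\ref{prop:qLDPhtr} itself depends on $h$, no projective-limit machinery (of the type used in \cite{BiGrdHo10}) is directly available; one must execute a careful coupling between the conditional laws under $\rho$ and under $\lceil\rho\rceil_h$, and show that the Radon–Nikodym factor on the event that all $N$ words have length at most $\tr$ can be uniformly controlled by $\exp(N\eta_n)$ with $\eta_n\downarrow 0$. This is precisely why the uniform-variation assumptions \eqref{ass:rhobar.reg2}--\eqref{ass:rhobar.reg1} are imposed in this section, and why they can only be dispensed with later in Section~\ref{removeass} via an additional approximation in $\bar{\rho}$. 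A secondary difficulty is that the comparison must hold $\WM$-almost surely along a countable dense sequence of $\Phi$'s in $\mathscr{C}$, which I would arrange by a standard separability argument.
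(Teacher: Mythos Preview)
Your overall architecture matches the paper's, but there is one genuine gap and two smaller issues.

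\textbf{The main gap.} You write that ``moving each renewal point by at most $h$ perturbs each Lipschitz $g_i$ by $O(h)$, so $\Phi([R_N]_\tr)-\Phi([R_{N,h}]_\tr)=o(1)$ uniformly.'' This is false. In the metric $d_F$ of \eqref{eq:metriconF}, shifting a cut point from $t_i$ to $\lceil t_i\rceil_h$ changes the word by an amount that involves the Brownian oscillation $\sup_{|s-t_i|\le kh}|X_s-X_{t_i}|$ near the cut (this is exactly the content of Lemmas~\ref{obs:dSclose1}--\ref{obs:Rdiscdiff}). These oscillations are random, of order $\sqrt{h}$ in law but unbounded pathwise, and there is no deterministic $O(h)$ bound. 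The paper therefore works on the good event $A_{\varepsilon,k,h}(N)=\{\sum_{i=1}^N\sum_{m=0}^k D_{\lceil T_i/h\rceil+m,h}\le N\varepsilon\}$ and then needs a separate annealing argument (Lemma~\ref{lem:expmomentsDsum}) to show that $\Pr(A_{\varepsilon,k,h}(N)^c\mid X)$ is super-exponentially small as $h\downarrow 0$. Without this step your comparison between $\Phi([R_N]_\tr)$ and $\Phi([R_{N,h}]_\tr)$ does not close.

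\textbf{Two smaller points.} First, you do not address the ``empty word'' problem: after coarse-graining, distinct $t_{i-1}<t_i$ can satisfy $\lceil t_{i-1}\rceil_h=\lceil t_i\rceil_h$, so the weight comparison between $w_h(\underline{j})$ and $\prod_k\lceil\rho\rceil_h(h(j_k-j_{k-1}))$ is not a straightforward Radon--Nikodym ratio; the paper handles this by passing to the reduced tuple $\underline{\hat\jmath}$ and a combinatorial overcount ${N\choose\varepsilon N}$. Second, in your argument for \eqref{eq:LambdaPhilimit3} you invoke ``$\Pr(\tau_1>\tr)\le e^{-c(\tr)}$'', which is not available under the polynomial tail \eqref{ass:rhodensdecay}. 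What you actually need (and what the paper uses) is that the \emph{empirical fraction} $\tfrac1N\#\{i\le N:\tau_i>\tr\}$ is exponentially unlikely to exceed any fixed $\delta>0$, which follows from a binomial large deviation bound with parameter $\rho([\tr,\infty))\to 0$.

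On exponential tightness: your Arzel\`a--Ascoli route would work, but the paper's one-line annealing argument (compact level sets of the annealed rate function plus Markov/Borel--Cantelli) is much shorter and avoids any path-space analysis.
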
 
\smallskip

Our second proposition identifies the limit in \eqref{eq:LambdaPhilimit1tr} as the 
generalised convex transform of $I^{\mathrm{que}}_{\tr}$ defined in \eqref{def:Ique.tr}, 
\begin{equation} 
I^{\mathrm{que}}_{\tr}(Q) 
= \begin{cases} H\bigl( Q \mid Q_{\rho,\WM,\tr} \bigr) 
+ (\alpha-1) m_Q H\left( \Psi_Q \mid \WM \right) & 
\text{if} \; Q \in \mathcal{P}^{\mathrm{inv}}(F_{0,\tr}^\N), \\[1ex]
\infty &  \text{otherwise}, 
\end{cases}
\end{equation}
and implies that the latter is the rate function for the truncated empirical process 
$[R_N]_\tr$. 

\begin{proposition} 
\label{prop:qLDPtrunc1}
For $\Phi\in \mathscr{C}$, 
\begin{equation}
\label{eq:Lambda0tr}
\Lambda_{0,\tr}(\Phi) = 
\sup_{Q \in \mathcal{P}^{\mathrm{inv}}(F_{0,\tr}^\N)}
\big\{ \Phi(Q) - I^{\mathrm{que}}_{\tr}(Q) \big\}.
\end{equation}
Furthermore, for $\WM$-a.e.\ $X$, the family $\mathscr{L}([R_N]_{\tr} \mid X)$, $N\in\N$, 
satisfies the LDP on $\mathcal{P}^{\mathrm{inv}}(F_{0,\tr}^\N)$ with deterministic rate 
function $I^{\mathrm{que}}_{\tr}$.
\end{proposition}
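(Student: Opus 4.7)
The plan is to apply Bryc's inverse Varadhan lemma (see \cite[Theorem~4.4.13]{DeZe98}) on the well-separating class $\mathscr{C}$. Proposition~\ref{prop:LambdaPhilimit1tr} already provides $X$-a.s.\ exponential tightness of $\mathscr{L}([R_N]_\tr \mid X)$ and the existence of the limit $\Lambda_{0,\tr}(\Phi)$ for every $\Phi \in \mathscr{C}$, so Bryc's lemma delivers a quenched LDP with rate function $J_\tr(Q) = \sup_{\Phi \in \mathscr{C}} \{\Phi(Q) - \Lambda_{0,\tr}(\Phi)\}$. It therefore suffices to (i) establish the variational identity \eqref{eq:Lambda0tr}, and (ii) use it to identify $J_\tr$ with $I^{\mathrm{que}}_\tr$.

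For step (i) I would combine the finite-$h$ representation \eqref{eq:Phihtrform} with the limit assertion $\Lambda_{0,\tr}(\Phi) = \lim_{h \downarrow 0} \Lambda_{h,\tr}(\Phi)$ in Proposition~\ref{prop:LambdaPhilimit1tr} via a two-sided $\Gamma$-convergence argument for the rate functions. For the lower bound on $\Lambda_{0,\tr}(\Phi)$, take $Q \in \mathcal{P}^{\mathrm{inv}}(F_{0,\tr}^\N)$ with finite rate and produce a recovery sequence $Q_h \in \mathcal{P}^{\mathrm{inv,fin}}((\widetilde{E_h})^\N)$ by discretising word lengths through $t \mapsto \lceil t \rceil_h$ and coarse-graining the path coordinates to the $h$-grid. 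The Lipschitz/Skorohod structure of the test functions $g_i$ comprising $\Phi$ gives $\Phi(Q_h) \to \Phi(Q)$, while the approximation estimates of Proposition~\ref{prop:Ique.tr.cont} together with the specific relative entropy computations in Appendices~\ref{entropy}--\ref{contrelentr} yield $\limsup_{h \downarrow 0} I^{\mathrm{que}}_{h,\tr}(Q_h) \leq I^{\mathrm{que}}_\tr(Q)$.

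For the matching upper bound, pick $Q_h$ near-maximising \eqref{eq:Phihtrform}. The constraint that word lengths lie in $h\N \cap (0,\tr]$ makes the sequence tight in $\mathcal{P}^{\mathrm{inv}}(F^\N)$; extract a weak limit point $Q^* \in \mathcal{P}^{\mathrm{inv}}(F_{0,\tr}^\N)$. Lower semi-continuity of $Q \mapsto H(Q \mid Q_{\rho,\WM,\tr})$, combined with the lower semi-continuity and coarse-graining monotonicity of $Q \mapsto m_Q H(\Psi_Q \mid \WM)$ supplied by Proposition~\ref{prop:Ique.tr.cont} (and crucially dependent on the $h$-uniform regularity hypotheses (\ref{ass:rhobar.reg2}--\ref{ass:rhobar.reg1}) on $\bar{\rho}$, which allow one to compare $Q_{\lceil \rho \rceil_h,\WM,\tr}$ to $Q_{\rho,\WM,\tr}$ uniformly in $h$), gives $\liminf_{h \downarrow 0} I^{\mathrm{que}}_{h,\tr}(Q_h) \geq I^{\mathrm{que}}_\tr(Q^*)$. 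Continuity of $\Phi$ under weak convergence then yields $\Lambda_{0,\tr}(\Phi) \leq \Phi(Q^*) - I^{\mathrm{que}}_\tr(Q^*) \leq \sup_{Q} \{ \Phi(Q) - I^{\mathrm{que}}_\tr(Q) \}$.

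Step (ii) is mechanical: substituting \eqref{eq:Lambda0tr} into the Bryc formula and choosing $Q'=Q$ in the inner supremum gives $J_\tr(Q) \leq I^{\mathrm{que}}_\tr(Q)$, while the reverse inequality follows because $\mathscr{C}$ separates points of $\mathcal{P}^{\mathrm{inv}}(F^\N)$ and $I^{\mathrm{que}}_\tr$ is lower semi-continuous and affine on its effective domain (by Proposition~\ref{prop:Ique.tr.cont}), so that its double transform over $\mathscr{C}$ recovers it. The hard part will be the $\liminf$ step in the lower bound on $\Lambda_{0,\tr}(\Phi)$: the coarse-grained stationarisation $\Psi_{Q_h,h}$ averages only over integer multiples of $h$ whereas $\Psi_{Q^*}$ averages over all real shifts, so one must rule out a loss of specific relative entropy in the refinement limit, and this is precisely where the regularity assumptions (\ref{ass:rhobar.reg2})--(\ref{ass:rhobar.reg1}) and the non-standard entropy approximation lemmas of Appendix~\ref{contrelentr} become indispensable.
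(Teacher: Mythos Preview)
Your overall strategy matches the paper's: establish \eqref{eq:Lambda0tr} via a $\Gamma$-convergence of $I^{\mathrm{que}}_{h,\tr}$ to $I^{\mathrm{que}}_\tr$ as $h\downarrow 0$, then invoke Bryc. Two points, however, need correction.

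First, your tightness argument for the near-maximisers $Q_h$ has a gap. You write that ``the constraint that word lengths lie in $h\N \cap (0,\tr]$ makes the sequence tight in $\mathcal{P}^{\mathrm{inv}}(F^\N)$''. But a word in $F$ is a pair $(t,f)$ with $f$ a continuous path, and bounding $t\le\tr$ says nothing about the amplitude of $f$; the family $\{Q_h\}$ is not tight from this alone. The paper obtains tightness differently: since $Q_h^*$ is a near-maximiser, necessarily $H(Q_h^*\mid [Q_{\lceil\rho\rceil_h,\WM}]_\tr)\le \|\Phi\|_\infty+1$ uniformly in $h$, and then the entropy inequality \eqref{ineq:entropy} together with tightness of the reference family $\{[Q_{\lceil\rho\rceil_h,\WM}]_\tr : h\in(0,1)\}$ (which converges weakly to $[Q_{\rho,\WM}]_\tr$) forces tightness of $\{Q_h^*\}$. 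This is a genuine missing step in your sketch.

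Second, you repeatedly invoke Proposition~\ref{prop:Ique.tr.cont} for the $h\downarrow 0$ $\Gamma$-convergence, but that proposition concerns the limit $\tr\to\infty$, not $h\downarrow 0$. The relevant statement is Lemma~\ref{lemma:Iquetrregularised}; its ``$\ge$'' half (the $\liminf$ you need for the upper bound on $\Lambda_{0,\tr}$) follows from lower semi-continuity of specific relative entropy via Lemma~\ref{lemma:hregularised1} and does not use the regularity hypotheses (\ref{ass:rhobar.reg2})--(\ref{ass:rhobar.reg1}). Those hypotheses enter only in the ``$\le$'' half (the recovery sequence, Lemma~\ref{lem:cg.2lev.blockapprox}), i.e., in your \emph{lower} bound on $\Lambda_{0,\tr}$, not the upper bound where you placed them.
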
 

Note that the family of truncation operators $[\cdot]_{\tr}$ forms a projective system as the 
truncation level $\tr$ increases. Hence we immediately get from Proposition~\ref{prop:qLDPtrunc1} 
and the Dawson-G\"artner projective limit LDP (see \cite[Theorem 4.6.1]{DeZe98}) that the 
family $\mathscr{L}(R_N \mid X)$, $N\in\N$, satisfies the LDP with rate function $Q \mapsto 
\sup_{\tr \in \N} I^{\mathrm{que}}_{\tr}([Q]_{\tr})$. Furthermore, since the projection can start 
at any initial level of truncation, we also know that the rate function is given by $Q \mapsto 
\sup_{\tr \geq n} I^{\mathrm{que}}_{\tr}([Q]_{\tr})$ for any $n\in\N$. Thus, Proposition~\ref{prop:qLDPtrunc1} 
in fact implies that the rate function is given by 
\begin{align} 
\label{eq:Ique-DGform}
\tilde{I}^{\mathrm{que}}(Q) = \limsup_{\tr\to\infty} I^{\mathrm{que}}_{\tr}([Q]_{\tr}).
\end{align}
At this point, it remains to prove that $\tilde{I}^{\mathrm{que}}$ from \eqref{eq:Ique-DGform} 
actually equals $I^{\mathrm{que}}$ from \eqref{eq:Iquelimitform} and has the form claimed 
in \eqref{eq:Ique}. 

This is achieved via the following proposition, 
note that \eqref{eq:lemma:Ique.tr.cont1} is the continuous analogue of 
\cite[Lemma A.1]{BiGrdHo10}.

\begin{proposition} 
\label{prop:Ique.tr.cont}
{\rm (1)} For $Q \in \mathcal{P}^{\mathrm{inv,fin}}(F^{\N})$, 
\begin{align}
\label{eq:lemma:Ique.tr.cont1}
\lim_{\tr\to\infty} I^{\mathrm{que}}_\tr([Q]_\tr) 
= H(Q \mid Q_{\rho,\WM}) + (\alpha-1) m_Q H(\Psi_Q \mid \WM).
\end{align}
{\rm (2)} For $Q \in \mathcal{P}^{\mathrm{inv}}(F^{\N})$ with $m_Q = \infty$ 
and $H(Q \mid Q_{\rho, \WM})<\infty$ there exists a sequence 
$(\widetilde{Q}_\tr)_{\tr \in \N}$ in $\mathcal{P}^{\mathrm{inv,fin}}(F^{\N})$ 
such that $\wlim_{\tr\to\infty} \widetilde{Q}_\tr = Q$ and 
\begin{align} 
\label{eq:lemma:Ique.tr.approx2}
\tilde{I}^{\mathrm{que}}(\widetilde{Q}_\tr) \leq I^{\mathrm{que}}_\tr([Q]_\tr) 
+ o(1), \qquad \tr \to \infty.
\end{align}
\end{proposition}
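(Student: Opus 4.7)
The plan is to handle the two parts separately, relying heavily on the properties of specific relative entropy and stationarisation developed in Appendices~\ref{entropy}--\ref{contrelentr}, and on monotone/data-processing arguments under further truncation.

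For Part~(1), the rate function $I^{\mathrm{que}}_\tr([Q]_\tr)$ is the sum of a ``word'' entropy $H([Q]_\tr\mid[Q_{\rho,\WM}]_\tr)$ and a ``concatenation'' term $(\alpha-1)m_{[Q]_\tr}H(\Psi_{[Q]_\tr}\mid\WM)$, and I would treat each summand separately. For the word term, the observation $[\cdot]_\tr=[\cdot]_\tr\circ[\cdot]_{\tr'}$ (for $\tr\le\tr'$) together with the data-processing inequality shows that $\tr\mapsto H([Q]_\tr\mid[Q_{\rho,\WM}]_\tr)$ is non-decreasing and bounded above by $H(Q\mid Q_{\rho,\WM})$. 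Combining this with lower semi-continuity of specific relative entropy under weak convergence (Appendix~\ref{entropy}) and the weak convergences $[Q]_\tr\to Q$, $[Q_{\rho,\WM}]_\tr\to Q_{\rho,\WM}$ yields $H([Q]_\tr\mid[Q_{\rho,\WM}]_\tr)\uparrow H(Q\mid Q_{\rho,\WM})$. For the concatenation term, monotone convergence gives $m_{[Q]_\tr}\uparrow m_Q$; the convergence $m_{[Q]_\tr}H(\Psi_{[Q]_\tr}\mid\WM)\to m_QH(\Psi_Q\mid\WM)$ is subtler because concatenating truncated words is not the same as truncating the concatenation (tails of long words are \emph{dropped}, not preserved). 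I would exploit the interpretation of $m_QH(\Psi_Q\mid\WM)$ as the specific relative entropy rate of the stationarised concatenation with respect to $\WM$, together with the total-variation stationarisation limit of Lemma~\ref{lemma:PsiQ:TVlim} and the continuity/approximation properties proved in Appendix~\ref{contrelentr}; a suitable coupling of $\kappa([Y]_\tr)$ to a time-changed subpath of $\kappa(Y)$ then gives the required two-sided bound.

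For Part~(2), the naive choice $\widetilde Q_\tr=[Q]_\tr$ fails because $[Q]_\tr$ has an atom on $\{\tau_1=\tr\}$ while $Q_{\rho,\WM}$ assigns it zero mass, forcing $H([Q]_\tr\mid Q_{\rho,\WM})=\infty$ and hence $\tilde I^{\mathrm{que}}([Q]_\tr)=\infty$. Instead I would ``smear'' the atom by a re-cutting construction: given $Y\sim Q$, any word of length $\tau_i>\tr$ is split into consecutive sub-words of lengths at most $\tr$, with the internal cut-points drawn independently from a $\rho$-conditioned renewal, and the resulting shift-invariant law is $\widetilde Q_\tr$. Since the new cuts are themselves $\rho$-distributed, the split is consistent with the product structure of $Q_{\rho,\WM}$: a Palm-type identity shows $H(\widetilde Q_\tr\mid Q_{\rho,\WM})\le H([Q]_\tr\mid[Q_{\rho,\WM}]_\tr)+o(1)$, while the concatenated path is unchanged so $m_{\widetilde Q_\tr}H(\Psi_{\widetilde Q_\tr}\mid\WM)\le m_{[Q]_\tr}H(\Psi_{[Q]_\tr}\mid\WM)+o(1)$. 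Finiteness of $H(Q\mid Q_{\rho,\WM})$ controls the $Q$-probability of very long words, ensuring $\widetilde Q_\tr\in\mathcal P^{\mathrm{inv,fin}}(F^\N)$ with mean at most $\tr$, and weak convergence $\widetilde Q_\tr\to Q$ because the fraction of re-cut words vanishes as $\tr\to\infty$. Applying Part~(1) to $\widetilde Q_\tr$ then converts $\tilde I^{\mathrm{que}}(\widetilde Q_\tr)$ into $H(\widetilde Q_\tr\mid Q_{\rho,\WM})+(\alpha-1)m_{\widetilde Q_\tr}H(\Psi_{\widetilde Q_\tr}\mid\WM)$, and the two bounds above deliver \eqref{eq:lemma:Ique.tr.approx2}.

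The main obstacle, in my view, is the concatenation-entropy convergence in Part~(1): the map $Q\mapsto m_QH(\Psi_Q\mid\WM)$ does not interact cleanly with word truncation, because removing the tails of long words changes the way the pieces glue into a single path, and the required specific-entropy-rate continuity must be extracted from the stationarisation formula \eqref{eq:PsiQ} via the non-standard approximations of Appendix~\ref{contrelentr}. Part~(2), although technically intricate, is in essence a careful re-coding argument once the splitting construction is set up, with the key algebraic identity being that decomposing a $\rho$-distributed length with an independent $\rho$-renewal reproduces an i.i.d.\ $\rho$-sequence up to a Palm correction contributing only $o(1)$ to the entropy.
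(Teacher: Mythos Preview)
Your treatment of Part~(1) is essentially in line with the paper: the word-entropy term converges by the projective property of the truncations, $m_{[Q]_\tr}\to m_Q$ by dominated convergence, and the only non-trivial step is the upper bound $\limsup_{\tr\to\infty}H(\Psi_{[Q]_\tr}\mid\WM)\le H(\Psi_Q\mid\WM)$. For this last step the paper does \emph{not} argue via a direct coupling/time-change on paths as you sketch; instead it reduces to the discrete-time statement (Lemma~\ref{lem:trcontinuous}) through a specific coarse-graining. One builds $\lceil Q\rceil_h$ by rounding the cut-points to a grid of mesh $h$ that has been shifted by an independent $U\sim\mathrm{Unif}[0,1]$ times $h$; the random shift makes $\lceil Q\rceil_h$ shift-invariant and, crucially, forces $\Psi_{\lceil Q\rceil_h,h}=\Psi_Q$ exactly. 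Since $\lceil\cdot\rceil_h$ commutes with $[\cdot]_\tr$ when $h\mid\tr$, one can apply the discrete bound \eqref{eq:HPsiQtr.qb} with $h$-letters and then translate back. Your coupling idea may well be workable, but as stated it is not a proof; the paper's route via \eqref{Psihrel}--\eqref{bepsest} is concrete and short.

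Part~(2) has a genuine gap. Your splitting construction---cutting each word with $\tau_i>\tr$ into sub-words using an internal $\rho$-renewal---does not yield $\widetilde Q_\tr\Rightarrow Q$ when $m_Q=\infty$. The shift-invariant law $\widetilde Q_\tr$ on the sub-word sequence is the Palm law seen from a typical sub-word, and the expected number of sub-words produced by a single original word is at least $\E_Q[\tau_1\1_{\{\tau_1>\tr\}}]/\tr=\infty$. Hence, under $\widetilde Q_\tr$, almost every word is an artificial sub-word of some long original word; the original short words carry zero asymptotic mass and all information about $Q$ beyond $\Psi_Q$ is lost. In particular the ``fraction of re-cut words'' does \emph{not} vanish---it equals $1$---and $\widetilde Q_\tr$ converges (if at all) to something like an i.i.d.\ $\rho$-cutting of a $\Psi_Q$-typical path, not to $Q$. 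The same issue undermines your entropy claim: the Palm/Abramov renormalisation factor between $H(\widetilde Q_\tr\mid\cdot)$ and $H(Q\mid\cdot)$ is infinite.

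The paper avoids this by working with $[Q]_\tr$ rather than $Q$ and keeping the number of words fixed. Its $\widetilde Q_\tr$ is obtained from $[Q]_\tr$ by moving the right endpoint of each truncated word (those with $\tau_i=\tr$) to the left by an independent $\mathrm{Unif}[0,1]$ amount, leaving the concatenated path unchanged. This immediately gives $m_{\widetilde Q_\tr}=m_{[Q]_\tr}$ and $\Psi_{\widetilde Q_\tr}=\Psi_{[Q]_\tr}$, so the concatenation term matches $I^{\mathrm{que}}_\tr([Q]_\tr)$ exactly; weak convergence $\widetilde Q_\tr\Rightarrow Q$ is clear because $Q(\tau_1>\tr)\to 0$ and each word is perturbed by at most one unit. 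The remaining work is to show $H(\widetilde Q_\tr\mid q_{\rho,\WM}^{\otimes\Z})\le H(\widetilde Q_\tr\mid \widetilde Q_\tr^{\mathrm{ref}})+o(1)$, which the paper does by an explicit density estimate on the word lengths (Steps~3--5 of Section~\ref{subsect:prop:Ique.tr.cont.part2}), using \eqref{ass:rhodensdecay} and the fact that $H(Q\mid Q_{\rho,\WM})<\infty$ forces $Q(\tau_1>\tr)=o(1/\log\tr)$.
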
 

\noindent
Proposition~\ref{prop:Ique.tr.cont} (1) implies that for $Q \in \mathcal{P}^{\mathrm{inv,fin}}
(F^{\N})$ the $\limsup$ in \eqref{eq:Ique-DGform} is a limit, i.e., it implies \eqref{eq:Iquelimitform} 
on $\mathcal{P}^{\mathrm{inv,fin}}(F^{\N})$ and also \eqref{eq:Ique}.

To prove \eqref{eq:Iquelimitform} for $Q \in \mathcal{P}^{\mathrm{inv}}(F^{\N})$ with 
$m_Q = \infty$ and $H(Q \mid Q_{\rho, \WM})<\infty$, consider $\widetilde{Q}_\tr$ as 
in Proposition~\ref{prop:Ique.tr.cont} (2). Then
\begin{align} 
\tilde{I}^{\mathrm{que}}(Q) \leq \liminf_{\tr\to\infty} \tilde{I}^{\mathrm{que}}(\widetilde{Q}_\tr) 
\leq \liminf_{\tr\to\infty} I^{\mathrm{que}}_\tr([Q]_\tr),
\end{align}
where the first inequality uses that $\tilde{I}^{\mathrm{que}}$ is lower semi-continuous (being
a rate function by the Dawson-G\"artner projective limit LDP), and the second inequality is a 
consequence of \eqref{eq:lemma:Ique.tr.approx2}. For $Q \in \mathcal{P}^{\mathrm{inv}}(F^{\N})$ 
with $H(Q \mid Q_{\rho, \WM})=\infty$ we have 
\begin{align} 
\liminf_{\tr\to\infty} I^{\mathrm{que}}_\tr([Q]_\tr) 
\geq \liminf_{\tr\to\infty} H([Q]_\tr \mid [Q_{\rho,\WM}]_\tr) 
= H(Q \mid Q_{\rho, \WM}) = \infty,
\end{align} 
i.e., also in this case the $\limsup$ in \eqref{eq:Ique-DGform} 
is a limit and \eqref{eq:Iquelimitform} holds. 
\smallskip

It remains to prove the properties of $I^{\mathrm{que}}$ claimed in Theorem~\ref{thm0:contqLDP}: 
lower semi-continuity of $I^{\mathrm{que}}=\tilde{I}^{\mathrm{que}}$ follows from the representation 
via the Dawson-G\"artner projective limit LDP in \eqref{eq:Ique-DGform}; compactness of the level 
sets of $I^{\mathrm{que}}$ and the fact that $Q_{\rho,\WM}$ is the unique zero of $Q \mapsto 
I^{\mathrm{que}}(Q)$ are inherited from the corresponding properties of $I^{\mathrm{ann}}$ 
because $I^{\mathrm{que}} \leq I^{\mathrm{ann}}$; affineness of $Q \mapsto I^{\mathrm{que}}(Q)$ 
can be checked as in \cite[Proof of Theorem 1.3]{BiGrdHo10}.
\end{proof}

\medskip\noindent
{\bf Remark.}\  
Theorem~\ref{thm0:contqLDP} together with Varadhan's lemma implies that
\begin{align} 
\label{eq:LambdaPhilimit2}
\Lambda(\Phi) 
= \sup_{Q \in \mathcal{P}^{\mathrm{inv, fin}}(F^\N)} 
\big\{ \Phi(Q) - I^{\mathrm{que}}(Q) \big\}, \qquad \Phi\in \mathscr{C}, 
\end{align}
and identifies $I^{\mathrm{que}}(Q)$ as the generalised convex transform
\begin{equation}
\label{eq:Iquetrafo} 
I^{\mathrm{que}}(Q) = \sup_{\Phi \in \mathscr{C}} 
\big\{ \Phi(Q) -  \Lambda(\Phi)\big\}, \qquad Q \in \mathcal{P}^{\mathrm{inv}}(F^\N)
\end{equation}
(see \cite[Theorems 4.4.2 and 4.4.10]{DeZe98}). The supremum in \eqref{eq:Iquetrafo} can 
also be taken over all continuous bounded functions on $\mathcal{P}^{\mathrm{inv}}(F^\N)$.



\subsection{Continuity of the empirical process under coarse-graining}
\label{3lem}

Before embarking on the proof of Propositions~\ref{prop:LambdaPhilimit1tr}--\ref{prop:Ique.tr.cont}
in Section~\ref{props}, we state and prove two approximation lemmas  
(Lemmas~\ref{obs:dSclose1}--\ref{obs:Rdiscdiff} below) that will be needed along the way.

For $N \in \N$, $0=t_0 < t_1 < \cdots < t_N$ and $\varphi \in C([0,\infty))$, let 
$y_\varphi = (y_\varphi^{(i)})_{i\in\N}$ with
\begin{equation}
\label{def:yphii}
y_\varphi^{(i)} = \Big(t_i-t_{i-1}, \big(\varphi((t_{i-1}+s) \wedge t_i)
-\varphi(t_{i-1})\big)_{s\geq 0}\Big) \in F, \qquad i=1,\dots,N,
\end{equation}
and define  
\begin{equation}
\label{eq:defRNphi}
R_{N;t_1,\dots,t_N}(\varphi) 
= \frac1N \sum_{i=0}^{N-1} 
\delta_{\widetilde{\theta}^i y_\varphi^{N\text{-}\mathrm{per}}}
\in \mathcal{P}^{\mathrm{inv}}(F^\N).
\end{equation}
We need a Skorohod-type distance $d_S$ on paths, which is defined in 
Appendix~\ref{metrics}.

\begin{lemma} 
\label{obs:dSclose1}
Let $i, j \in \N$, $i \leq j$, and $t, t' \in (0,\infty)$, $t<t'$, be such that 
$(i-1)h < t \leq ih$, $(j-1)h < t' \leq jh$. Then, for any $\varphi \in C([0,\infty))$ 
and $k \in \N$, 
\begin{align} 
\label{eq:dS_wishful}
& d_S\big( \varphi((ih+\cdot) \wedge jh), \varphi((t+\cdot) \wedge t')\big) \notag \\
& \leq \log\tfrac{k+1}{k} + 2 \sup_{(i-1)h \leq s \leq (i+k)h} 
|\varphi(s)-\varphi((i-1)h)| + 2 \sup_{(j-1)h \leq s \leq jh} 
|\varphi(s)-\varphi((j-1)h)|.
\end{align}
The same bound holds for $d_S([\varphi((ih+\cdot) \wedge jh)]_\tr, 
[\varphi((t+\cdot) \wedge t')]_\tr)$ for any truncation length $\tr > 0$.
\end{lemma}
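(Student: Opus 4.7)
My plan is to construct an explicit Skorohod time-change $\lambda\colon[0,\infty)\to[0,\infty)$ with $\lambda(0)=0$ and exploit the standard bound
\begin{equation*}
d_S(\psi_1,\psi_2) \leq \|\lambda\|^\circ + \|\psi_1\circ\lambda-\psi_2\|_\infty,
\end{equation*}
where I write $\psi_1(s) := \varphi((ih+s)\wedge jh)$ and $\psi_2(s) := \varphi((t+s)\wedge t')$. Setting $\Delta_1 := ih-t \in [0,h)$ and $\Delta_2 := jh-t' \in [0,h)$, the two paths are related by the pure time-shift $s \mapsto s-\Delta_1$ in the bulk, modulo endpoint discrepancies of size at most $h$. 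Since a Skorohod time-change must fix the origin, the shift $\Delta_1$ must be absorbed over an initial window, whose width is essentially the free parameter $kh$ appearing in the lemma.

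Concretely, I propose $\lambda(s) = \tfrac{kh}{kh+\Delta_1}\, s$ for $s \in [0,kh+\Delta_1]$ and $\lambda(s) = s-\Delta_1$ for $s \geq kh+\Delta_1$. This is a piecewise-linear increasing bijection of $[0,\infty)$ with slopes $\tfrac{kh}{kh+\Delta_1} \in [k/(k+1),1]$ and $1$; a straightforward chord computation then gives $\|\lambda\|^\circ = \log\big(1+\Delta_1/(kh)\big) \leq \log((k+1)/k)$, which accounts for the first summand of \eqref{eq:dS_wishful}. For the sup-norm part I would then do a three-way case analysis. On $[0,kh+\Delta_1]$, both $\psi_1(\lambda(s))$ and $\psi_2(s)$ are evaluations of $\varphi$ on $[(i-1)h,(i+k)h]$, because $ih+\lambda(s)\in[ih,(i+k)h]$, $t\geq(i-1)h$, and $t+s\leq (i+k)h$; moreover, if either cap ($\wedge jh$ in $\psi_1$ or $\wedge t'$ in $\psi_2$) activates on this window, an elementary check shows $k\geq j-i$, whence $jh$ and $t'$ also lie in $[(i-1)h,(i+k)h]$. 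This contributes at most twice $\sup_{(i-1)h\leq s\leq(i+k)h}|\varphi(s)-\varphi((i-1)h)|$. For $s\geq kh+\Delta_1$ with $t+s\leq t'$ one has $ih+\lambda(s) = t+s \leq jh$, so $\psi_1\circ\lambda$ and $\psi_2$ agree pointwise. For $s\geq kh+\Delta_1$ with $t+s>t'$ both paths take values in $\varphi([(j-1)h,jh])$, yielding the third summand.

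For the truncated statement I would use the same time change: since $\lambda(s)\leq s$ for all $s$, $[\psi_1]_\tr\circ\lambda$ coincides with $\psi_1\circ\lambda$ on $[0,\tr]$, and both $[\psi_1]_\tr\circ\lambda$ and $[\psi_2]_\tr$ are frozen on $[\tr,\infty)$ at values already controlled by the oscillation bounds above, so the slope and sup-norm estimates transfer verbatim. The main point requiring care when writing this out is the boundary bookkeeping in the first regime — namely the verification that whenever one of the caps $\wedge jh$ or $\wedge t'$ activates on $[0,kh+\Delta_1]$ the inequality $jh \leq (i+k)h$ holds, so that both values of $\varphi$ really stay inside the single oscillation window $[(i-1)h,(i+k)h]$; everything else is a routine piecing together of the three subcases.
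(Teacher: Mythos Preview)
Your proof is correct and takes essentially the same approach as the paper: both construct a piecewise-linear time change that absorbs the offset $ih-t$ over an initial window of length $\sim kh$, giving $\gamma(\lambda)\le\log\tfrac{k+1}{k}$, and then split the sup-norm estimate into the initial window, an exact-match region, and an endpoint region near $jh$. The only cosmetic difference is that you apply $\lambda$ to $\psi_1$ (so $\lambda(s)\le s$) whereas the paper applies it to $\psi_2$ (so $\lambda(s)\ge s$); these time changes are mutual inverses, and your unified handling of the case $j<i+k$ via the cap-activation check replaces the paper's separate reduction to $\lambda=\mathrm{id}$.
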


\begin{proof}
Without loss of generality we may assume that $j \geq i+k$ (otherwise, employ the
trivial time transform $\lambda(s)=s$ and estimate the left-hand side of 
\eqref{eq:dS_wishful} by the second term in the right-hand side of \eqref{eq:dS_wishful}), 
and use the time transformation
\begin{align} 
\lambda(s) = 
\begin{cases} 
s \, \frac{(i+k)h-t}{kh} & \text{if} \; s < kh, \\
s + ih -t & \text{if} \; s \geq kh.
\end{cases}
\end{align} 
In that case $\lambda(s)+t=s+ih$ for $s \geq kh$ and $\gamma(\lambda)=|\log[((i+k)h-t)/kh]|
\leq \log\frac{k+1}{k}$. The same argument applies to the truncated paths 
$[\varphi((ih+\cdot) \wedge jh)]_\tr$ and $[\varphi((t+\cdot) \wedge t')]_\tr$
(in fact, we can drop the third term in the right-hand side of \eqref{eq:dS_wishful} 
when $(j-1)h>\tr$).
\end{proof} 

\begin{lemma} 
\label{obs:Rdiscdiff} 
Let $\varphi \in C([0,\infty))$, $h>0$, $N\in\N$ and $t_0=0<t_1<\cdots<t_N$. Let $\ell \in \N$, 
and let $g\colon\,\widehat{F}^\ell \to \R$ be bounded Lipschitz with Lipschitz constant $C_g$. 
Then, for $k \in \N$ with $k \geq \ell$, 
\begin{equation}
\begin{aligned} 
&N \Big| \int_{\widehat{F}^{\ell}} g\, d\pi_\ell R_{N;t_1,\dots,t_N}(\varphi) 
- \int_{\widehat{F}^{\ell}} g\, d\pi_\ell R_{N;\lceil t_1 \rceil_h,\dots,\lceil t_N \rceil_h}(\varphi) \Big|\\
&\qquad \leq 
4\ell \|g\|_\infty  + C_g \ell N \bigl(2h + \log{\textstyle\frac{k+1}{k}}\bigr) 
+ 4 C_g \ell \sum_{i=1}^N \sup_{\lceil t_i \rceil_h-h \leq s \leq \lceil t_i \rceil_h+kh}
\big| \varphi(s) -\varphi({\lceil t_i \rceil_h-h}) \big|,
\end{aligned}
\end{equation}
where $\pi_\ell\colon\,\widehat{F}^\N \to \widehat{F}^\ell$ denotes the projection onto the first 
$\ell$ coordinates. The same bound holds for the truncated versions $[R_{N;t_1,\dots,t_N}
(\varphi)]_\tr$ and $[R_{N;\lceil t_1 \rceil_h,\dots,\lceil t_N \rceil_h}(\varphi) ]_\tr$ for any 
truncation length $\tr>0$.  
\end{lemma}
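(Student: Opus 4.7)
The plan is to bound the difference of integrals by a sum of word-by-word distances in $\widehat{F}$, apply the Lipschitz property of $g$ termwise, and then invoke Lemma~\ref{obs:dSclose1} to control each word-distance. The argument for the truncated version requires no additional ideas because Lemma~\ref{obs:dSclose1} provides the same estimate after truncation.

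First, I would expand both integrals using \eqref{eq:defRNphi} as cyclic sums,
\begin{equation*}
N \int_{\widehat{F}^\ell} g\, d\pi_\ell R_{N;t_1,\dots,t_N}(\varphi)
= \sum_{i=0}^{N-1} g\bigl( y_\varphi^{(\sigma_i(1))}, \dots, y_\varphi^{(\sigma_i(\ell))} \bigr),
\end{equation*}
where $\sigma_i(j) = ((i+j-1) \bmod N) + 1$, and analogously for the coarse-grained side with words $\tilde{y}_\varphi^{(n)}$ built from the cut points $\lceil t_n \rceil_h$. Taking the difference, I would bound each summand by $\min\bigl( 2\|g\|_\infty,\, C_g \sum_{j=1}^\ell d_F(y^{(j)}, \tilde{y}^{(j)}) \bigr)$. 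For the ${\leq}\,2\ell$ ``bad'' indices $i$ (those for which the cyclic shift straddles the periodisation seam, and/or for which at least one word has collapsed to $(0,0)\in\widehat{F}$ in a way that would be awkward to control by Lipschitz alone) I would use the trivial bound $2\|g\|_\infty$, contributing at most $4\ell \|g\|_\infty$. For all other indices the Lipschitz bound applies; after interchanging the two sums and using that each $n \in \{1,\dots,N\}$ appears exactly $\ell$ times, I obtain
\begin{equation*}
\sum_{i=0}^{N-1} \bigl| g(\vec{y}) - g(\vec{\tilde{y}}) \bigr|
\leq 4\ell \|g\|_\infty + C_g \ell \sum_{n=1}^N d_F\bigl( y_\varphi^{(n)}, \tilde{y}_\varphi^{(n)} \bigr).
\end{equation*}

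Next, I would bound each $d_F\bigl( y_\varphi^{(n)}, \tilde{y}_\varphi^{(n)} \bigr)$ by a length part plus a Skorohod path part. The length difference equals $|(t_n - t_{n-1}) - (\lceil t_n \rceil_h - \lceil t_{n-1} \rceil_h)|$, which is at most $2h$ since $|t_n - \lceil t_n \rceil_h| \leq h$ for every $n$. For the path part, I set $p_n := \lceil t_{n-1}/h \rceil$ and $q_n := \lceil t_n/h \rceil$ and apply Lemma~\ref{obs:dSclose1} with $i = p_n$, $j = q_n$, and the given buffer $k$. The centering of the two word-paths at their respective starting values introduces an additive constant of size $|\varphi(t_{n-1}) - \varphi(\lceil t_{n-1} \rceil_h)|$, which is dominated by an oscillation term already appearing in the right-hand side of Lemma~\ref{obs:dSclose1}. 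Thus each word-distance is bounded by $2h + \log\tfrac{k+1}{k}$ plus at most two oscillation terms of the form $\sup_{\lceil t_m \rceil_h - h \leq s \leq \lceil t_m \rceil_h + kh} |\varphi(s) - \varphi(\lceil t_m \rceil_h - h)|$, once for $m = n-1$ and once for $m = n$. Summing over $n$, each anchor $\lceil t_m \rceil_h - h$ is visited at most twice in total, which produces the factor $4$ in front of the oscillation sum and yields the claim after combining with the bound of the previous paragraph. The truncated statement follows identically, since Lemma~\ref{obs:dSclose1} gives the analogous estimate under $[\,\cdot\,]_\tr$.

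The main obstacle is the careful combinatorial bookkeeping: tracking the $O(\ell)$ ``bad'' indices that produce the $4\ell \|g\|_\infty$ correction; handling the degenerate case where $t_{n-1}$ and $t_n$ fall in the same $h$-interval, so that $\tilde{y}_\varphi^{(n)} = (0,0)$ (in which case the Skorohod argument behind Lemma~\ref{obs:dSclose1} still goes through via a suitable time-change and the same oscillation-based bound applies); and re-expressing the oscillation anchors coming out of Lemma~\ref{obs:dSclose1}, naturally expressed as $(p_n-1)h$ and $(q_n-1)h$, in the uniform form $\lceil t_n \rceil_h - h$ required by the statement.
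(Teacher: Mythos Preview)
Your proposal is correct and follows essentially the same route as the paper: bound each word distance $d_F(y_\varphi^{(n)},\tilde{y}_\varphi^{(n,h)})$ via Lemma~\ref{obs:dSclose1}, apply the Lipschitz property of $g$, and absorb the periodisation boundary effects into the term $4\ell\|g\|_\infty$. Your write-up is in fact more explicit about the combinatorial bookkeeping (the double counting of anchors giving the factor $4$, and the degenerate empty-word case) than the paper's proof, which states the $d_F$-bound and then simply notes that \eqref{eq:g_Lipschitz} plus Lemma~\ref{obs:dSclose1} yield the claim, with periodisation effects accounted for by $4\ell\|g\|_\infty$.
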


\begin{proof}
For $i=1,\dots,N$, 
recall $y_\varphi^{(i)}$ from \eqref{def:yphii}, i.e., $y_\varphi^{(i)}$ is the $i$-th word obtained 
by cutting the continuous path $\varphi$ along the time points $t_1,\ldots,t_n$, and let
\begin{align} 
\tilde{y}_\varphi^{(i,h)} 
& = \Bigl(\lceil t_i \rceil_h-\lceil t_{i-1} \rceil_h, 
\bigl(\varphi((\lceil t_{i-1} \rceil_h+s) \wedge \lceil t_i \rceil_h)
-\varphi(\lceil t_{i-1} \rceil_h)\bigr)_{s\geq 0}\Bigr), 
\end{align} 
be the analogous quantity when the $h$-discretised time points $\lceil t_1 \rceil_h,\ldots,\lceil t_N \rceil$ 
are used. By Lemma~\ref{obs:dSclose1} we have 
\begin{equation}
\begin{aligned} 
d_F\bigl(y_\varphi^{(i)}, \tilde{y}_\varphi^{(i,h)}\bigr) 
&\leq \bigl(2h + \log{\textstyle\frac{k+1}{k}}\bigr) 
+ 2 \sup_{\lceil t_{i-1} \rceil_h-h \leq s \leq \lceil t_{i-1} \rceil_h+kh}
\big| \varphi(s) -\varphi({\lceil t_{i-1} \rceil_h}-h) \big|\\ 
&\qquad\qquad + 2 \sup_{\lceil t_{i} \rceil_h-h \leq s \leq \lceil t_{i} \rceil_h} 
\big| \varphi(s) -\varphi({\lceil t_{i} \rceil_h}-h) \big|.
\end{aligned}
\end{equation} 
Writing $\tilde{y}^{(h)}=(\tilde{y}^{(i,h)})_{i\in\N}$ and putting, similarly as in
\eqref{eq:defRNphi},
\begin{equation}
\label{eq:defRNphi_hdisc}
R_{N;\lceil t_1 \rceil_h,\dots,\lceil t_N \rceil_h}(\varphi) 
= \frac1N \sum_{i=0}^{N-1} 
\delta_{\widetilde{\theta}^i(\tilde{y}^{(h)})^{N\text{-}\mathrm{per}}},
\end{equation}
we see that the claim follows from \eqref{eq:g_Lipschitz} in combination with 
Lemma~\ref{obs:dSclose1}. Note that possible boundary effects due to the periodisation 
are estimated by the term $4\ell\|g\|_\infty$. The observation about the truncated 
versions of the empirical process follow analogously from Lemma~\ref{obs:dSclose1}.
\end{proof}


\section{Proof of Propositions~\ref{prop:LambdaPhilimit1tr}--\ref{prop:Ique.tr.cont}}
\label{props}


\subsection{Proof of Proposition~\ref{prop:LambdaPhilimit1tr}}
\label{prop1}

\begin{proof}
The proof comes in 3 Steps.

\paragraph{Step 1.} 
A.s.\ exponential tightness of the family $\mathscr{L}(R_N \mid X)$, $N\in\N$, 
is standard, because the family of unconditional distributions $\mathscr{L}(R_N)$ 
satisfies the LDP with a rate function that has compact level sets. Indeed, let 
$M > 0$, and pick a compact set $K \subset \mathcal{P}^{\mathrm{inv}}(F^\N)$ such 
that $\limsup_{N\to\infty} \tfrac1N \log \Pr(R_N \not\in K) \leq -2M$. Then 
$\Pr(\Pr(R_N \not\in K \mid X) > e^{-MN}) \leq e^{MN} \E[\Pr(R_N \not\in K\mid X)] 
\leq \exp(MN -2MN +o(N))$, which is summable in $N$. Hence we have $\limsup_{N\to\infty} 
\tfrac1N \log \Pr(R_N \not\in K \mid X) \leq -M$ a.s.\ by the Borel-Cantelli lemma. 
The same argument applies to $[R_N]_{\tr}$ (alternatively, use the fact that 
$[\cdot]_\tr$ is a continuous map). 

\paragraph{Step 2a.} 
We next verify that the limits in \eqref{eq:LambdaPhilimit1tr} exist. In Step 2a we consider 
the case $\mathrm{supp}(\rho)=[0,\infty)$, in Step 2b the case $\mathrm{supp}(\rho)=[s_*,\infty)$
with $s_*>0$.

Let $\tr \in \N$ and $h = 2^{-n}$. Let $Y^{(i,h)}=(\lceil T_i \rceil_h -\lceil T_{i-1} \rceil_h, 
(X_{(s+\lceil T_{i-1} \rceil_h) \wedge \lceil T_i \rceil_h}-X_{\lceil T_{i-1} \rceil_h})_{s\geq 0}) \in 
\widehat{F}$ be the $h$-discretised $i$-th word, and let
\begin{equation} 
\label{def:RNh}
R_{N,h} = \frac1N \sum_{i=0}^{N-1} 
\delta_{\widetilde{\theta}^i (Y^{(h)})^{N\text{-}\mathrm{per}}}
\end{equation}
be the $h$-discretised empirical process, where $Y^{(h)}=(Y^{(i,h)})_{i\in\N}$. Put 
$\ell = \ell_1 \vee \cdots \vee \ell_m$, $C_g=C_{g_1} \vee \cdots \vee C_{g_m}$. 
Let 
\begin{equation}
\label{eq:defDjh}
D_{j,h} = \sup_{(j-1)h \leq s \leq j h} |X_s-X_{j h}|,
\qquad  
A_{\varepsilon,k,h}(N) = \left\{ \sum_{i=1}^N \sum_{j=0}^k 
D_{\lceil T_i/h \rceil+j,h} \leq N \varepsilon \right\}. 
\end{equation}
By Lemma~\ref{obs:Rdiscdiff}, on the event $A_{\varepsilon,k,h}(N)$ we have 
\begin{equation} 
N \big| \Phi([R_N]_\tr) - \Phi([R_{N,h}]_\tr) \big| 
\leq 4\ell \|\Phi\|_\infty + N C_g \ell m \Big(2h+ \log{\textstyle\frac{k+1}{k}} 
+ 4\varepsilon \Big),  
\end{equation}
and hence
\begin{align} 
\label{eq:EeNPhiRNtr.ub1}
\E\big[ e^{N\Phi([R_N]_\tr)} \big| X \big] \leq & \, 
\exp\big[N C_g \ell m \big(2h+ \log{\textstyle\frac{k+1}{k}} 
+ 4\varepsilon \big) + 4\ell \|\Phi\|_\infty \big]\, 
\E\big[ e^{N\Phi([R_{N,h}]_\tr)} \big| X \big] \notag \\
& \, {} + e^{N \|\Phi\|_\infty} \Pr\big(A_{\varepsilon,k,h}(N)^c \mid X\big), 
\end{align}
For $\lambda >0$, estimate
\begin{align} 
\Pr([A_{\varepsilon,k,h}(N)]^c | X) 
\leq e^{-N \lambda \varepsilon} \, 
\E\left[ \exp\Big[ \lambda \sum_{i=1}^N \sum_{m=0}^k 
D_{\lceil T_i/h \rceil+m,h} \Big] \,\, \Big| \, X \right],
\end{align}
so that, by Lemma~\ref{lem:expmomentsDsum} in Step 4 below, 
\begin{align} 
\label{eq:limsuplogPrAN}
\limsup_{N\to\infty} \frac1N \log \Pr\big([A_{\varepsilon,k,h}(N)]^c \mid X\big) 
\leq -\varepsilon \lambda + \frac12 \log \chi\big(2 k \lambda \sqrt{h}\big). 
\end{align}
Since $\lim_{u\downarrow 0} \chi(u)= 1$, we have, for all $\varepsilon > 0$ and $k\in\N$, 
\begin{equation} 
\label{eq_Aepskh.unlikely}
\limsup_{h \downarrow 0} \limsup_{N\to\infty} \frac1N 
\log \Pr\big([A_{\varepsilon,k,h}(N)]^c \mid X\big) = - \infty \quad \text{a.s.}
\end{equation}
(pick $\lambda=\lambda(h)$ in (\ref{eq:limsuplogPrAN}) in such a way that 
$\lambda\to\infty$ and $\lambda\sqrt{h}\to 0$).
\smallskip

Next, observe that
\begin{align}
\E\big[ e^{N\Phi([R_N]_\tr)} \mid X \big] 
&= \int\cdots\int_{0<t_1<\cdots<t_N} \bar{\rho}(t_1) dt_1\, \bar{\rho}(t_2-t_1) dt_2  
\times\cdots\times \bar{\rho}(t_N-t_{N-1}) dt_N \notag \\
&\qquad \times \exp\big[ {N\Phi\big([R_{N;t_1,\dots,t_N}(X)]_\tr\big)} \big],\\[0.5ex] 
\label{eq:expPhiRNhtr}
\E\big[ e^{N\Phi([R_{N,h}]_\tr)} \mid X \big] 
&= \sum_{1 \leq j_1 \leq \cdots \leq j_N} w_h(j_1,\dots,j_N) 
\exp\big[ {N\Phi\big([R_{N;h j_1,\dots,h j_N}(X)]_\tr\big)} \big], 
\end{align} 
where 
\begin{equation}
\begin{aligned} 
\label{eq:wh.weights}
w_h(j_1,\dots,j_N) 
&= \int\cdots\int_{0<t_1<\cdots<t_N} \bar{\rho}(t_1) dt_1\, \bar{\rho}(t_2-t_1) dt_2  
\times\cdots\times \bar{\rho}(t_N-t_{N-1}) dt_N\\
&\qquad\qquad \times \prod_{k=1}^N \1_{(h(j_k-1), h j_k]}(t_k).
\end{aligned}
\end{equation} 
The idea is to replace the right-hand side of \eqref{eq:wh.weights} by $\prod_{k=1}^N 
\lceil \rho \rceil_h(h(j_k-j_{k-1}))$, which is the corresponding weight for a discrete-time 
renewal process with waiting time distribution $\lceil \rho \rceil_h$. The rigorous implementation 
of this idea requires some care, since the coarse graining can produce ``empty'' words. 

For $\underline{j}=(j_1,\dots,j_N)$ appearing in the sum in \eqref{eq:expPhiRNhtr}, let 
$R(\underline{j}) = \# \{ 1 \leq i \leq N \colon j_i = j_{i-1}\}$ be the total number of repeated 
values and $\underline{\hat{\jmath}}=(\hat \jmath_1,\dots,\hat \jmath_M)$ with $M=M(\underline{j})
=N-R(\underline{j})$, $1 \leq \hat \jmath_1 < \cdots < \hat \jmath_M$, the unique elements 
of $\underline{j}$. Note that any given $\underline{\hat{\jmath}}$ with $M=\lceil (1-\varepsilon)
N \rceil$ can be obtained in this way from at most ${N \choose \lceil \varepsilon N \rceil}$ 
different $\underline{j}$'s. 

In the following, we write $\eta(h)=\eta_n$ and $A(h) = A_n$ with $\eta_n$ and $A_n$ from 
\eqref{ass:rhobar.reg2} when $h=2^{-n}$. Let us parse through the right-hand side of 
\eqref{eq:wh.weights} successively for $k=N,N-1,\dots,1$. When $j_k=j_{k-1}$, we
integrate $t_k$ out over $(h(j_k-1), h j_k]$ and estimate the (multiplicative) contribution 
of this integral from above by $1$. When $j_k>j_{k-1}$, we replace $\bar{\rho}(t_k-t_{k-1})$ 
by $\bar{\rho}(t_k- h j_{k-1})$ and integrate $t_k$ out over $(h(j_k-1), h j_k]$. For $h(j_k-j_{k-1}) 
\in A(h)$ we can estimate the contribution of this integral from above by $e^{\eta(h)} \lceil \rho 
\rceil_h(h(j_k-j_{k-1}))$) by using \eqref{ass:rhobar.reg2}, while for $h(j_k-j_{k-1}) \not \in A(h)$ 
we can estimate it by $e^{\eta_0} \lceil \rho \rceil_h(h(j_k-j_{k-1}))$ by using \eqref{ass:rhobar.reg1} 
with $s_*=0$. Thus, for $\underline{j}$ with $R(\underline{j}) \leq \varepsilon N$ and 
$\#\{ 1 \leq i < N \colon h (j_i - j_{i-1}) \not\in A(h) \} \leq \varepsilon N$, we have
\begin{align} 
\label{eq:estwhbyrhohgr}
w_h(\underline{j}) \leq e^{\varepsilon \eta_0 N} e^{\eta(h) N}
\prod_{i=1}^{M} \lceil \rho \rceil_h \big(h(\hat \jmath_i- \hat \jmath_{i-1})\big) 
\end{align}
with $M=N-R(\underline{j})$.
Furthermore, 
\begin{align} 
\label{eq:estlast}
\Big| N\Phi\big([R_{N;h j_1,\dots,h j_N}(X)]_\tr\big) 
 - M\Phi\big([R_{M;h \hat \jmath_1,\dots,h \hat \jmath_M}(X)]_\tr\big) \Big| 
\le (N-M) \ell \| \Phi \|_\infty \le \varepsilon N \ell \| \Phi \|_\infty.
\end{align}
Combining (\ref{eq:expPhiRNhtr}--\ref{eq:estlast}), we find 
\begin{align} 
\label{eq:EeNPhiRNhtr.ub2}
&\E\big[e^{N\Phi([R_{N,h}]_\tr)} \mid X \big] \notag \\
&\leq e^{N \| \Phi \|_\infty} \Big\{ \Pr\Big( R(\lceil T_1 \rceil_h,\dots, \lceil T_N \rceil_h) 
\geq \varepsilon N \, \Big| \, X \Big) \notag \\ 
&\hspace{5em} {} + \Pr\Big( \#\big\{ 1 \leq i < N \colon \lceil T_i \rceil_h - \lceil T_{i-1} \rceil_h 
\not\in A(h) \big\} \geq \varepsilon N \, \Big| \, X \Big) \Big\} \notag \\
&\quad + e^{[\varepsilon \eta_0 + \eta(h)]N} {N \choose \varepsilon N} 
\sum_{M=\lceil (1-\varepsilon) N \rceil}^N \sum_{1 \leq \hat{\jmath}_1 < \cdots < \hat{\jmath}_M} 
e^{M \Phi\big([R_{M;h \hat{\jmath}_1,\dots,h \hat{\jmath}_M}(X)]_\tr\big)}
\prod_{k=1}^M \lceil \rho \rceil_h(h(\hat{\jmath}_k-\hat{\jmath}_{k-1})) . 
\end{align}
But
\begin{align} 
\sum_{1 \leq \hat{\jmath}_1 < \cdots < \hat{\jmath}_M} 
e^{M \Phi\big([R_{M;h \hat{\jmath}_1,\dots,h \hat{\jmath}_M}(X)]_\tr\big)}
\prod_{k=1}^M \lceil \rho \rceil_h(h(\hat{\jmath}_k-\hat{\jmath}_{k-1})) 
= \E_{\lceil \rho \rceil_h}\big[ & e^{M\Phi([R_M]_\tr)} \mid X \big],
\end{align}
where $\E_{\lceil \rho \rceil_h}$ denotes expectation w.r.t.\ the reference measure 
$Q_{\lceil \rho \rceil_h, \WM}$, and so we can apply Corollary~\ref{prop:qLDPhtr} 
and Varadhan's lemma to obtain 
\begin{align} 
\label{eq:limEeMPhi}
\lim_{M\to\infty} \frac1M \log 
\E_{\lceil \rho \rceil_h}\big[ & e^{M\Phi([R_M]_\tr)} \mid X \big]
= \sup_{Q \in \mathcal{P}^{\mathrm{inv, fin}}(\widetilde{E_h}^\N)} 
\big\{ \Phi(Q) - I^{\mathrm{que}}_{h,\tr}(Q) \big\}.
\end{align}

By elementary large deviation estimates for binomials we have, for any $\varepsilon>0$,
\begin{align} 
\label{eq:toomanywrongloops1}
&\limsup_{h \downarrow 0} \limsup_{N\to\infty} \frac1N \log 
\Pr\big( R(\lceil T_1 \rceil_h,\dots, \lceil T_N \rceil_h) 
\geq \varepsilon N \, \big| \, X \big) = - \infty,\\
\label{eq:toomanywrongloops2}
&\limsup_{h \downarrow 0} \limsup_{N\to\infty} \frac1N \log 
\Pr\Big( \#\big\{ 1 \leq i < N \colon\, \lceil T_i \rceil_h - \lceil T_{i-1} \rceil_h 
\not\in A(h) \big\} \geq \varepsilon N \, \Big| \, X \Big) = - \infty. 
\end{align}
(Note that the events in (\ref{eq:toomanywrongloops1}--\ref{eq:toomanywrongloops2}) 
are independent of $X$.) Combining \eqref{eq:EeNPhiRNtr.ub1}, \eqref{eq:EeNPhiRNhtr.ub2} 
and \eqref{eq:limEeMPhi}, and noting that $\lim_{N\to\infty} \frac1N \log {N \choose \varepsilon N}
= -\varepsilon\log\varepsilon - (1-\varepsilon)\log(1-\varepsilon)$, we find
\begin{equation} 
\label{eq:eNPhiRNasympt_upper0}
\begin{aligned} 
&\limsup_{N\to\infty}  \frac1N \log \E\big[ e^{N\Phi([R_N]_\tr)} \mid X \big] \\
&\qquad \leq \bigg\{ \sup_{Q \in \mathcal{P}^{\mathrm{inv, fin}}((\widetilde{E_h})^\N)} 
\big\{ \Phi(Q) - I^{\mathrm{que}}_{h,\tr}(Q) \big\} \\
&\qquad \qquad + C_g \ell m \big(2h+ \log{\textstyle\frac{k+1}{k}} 
+ 4\varepsilon \big) + \varepsilon \eta_0 + \eta(h) 
+ \varepsilon\log\tfrac {1}{\varepsilon} + (1-\varepsilon)\log\tfrac{1}{1-\varepsilon} \bigg\}\\ 
&\qquad \vee \bigg( \|\Phi\|_\infty 
+ \limsup_{N\to\infty} \frac1N \log \Pr\big(A_{\varepsilon,k,h}(N)^c \mid X\big)\bigg\} \\
&\qquad \vee \bigg\{ \|\Phi\|_\infty 
+  \limsup_{N\to\infty} \frac1N \log 
\Pr\Big( R(\lceil T_1 \rceil_h,\dots, \lceil T_N \rceil_h) 
\geq \varepsilon N \, \Big| \, X \Big) \bigg\} \\
&\qquad \vee \bigg\{ \|\Phi\|_\infty 
+  \limsup_{N\to\infty} \frac1N \log 
\Pr\Big( \#\{ 1 \leq i < N \colon \lceil T_i \rceil_h - \lceil T_{i-1} \rceil_h 
\not\in A(h) \} \geq \varepsilon N \, \Big| \, X \Big) \bigg\},
\end{aligned}
\end{equation}
and hence 
\begin{align} 
\label{eq:eNPhiRNasympt_upper}
\limsup_{N\to\infty} \frac1N \log \E\big[ e^{N\Phi([R_N]_\tr)} \big| X \big] 
\leq \liminf_{h\downarrow 0} 
\sup_{Q \in \mathcal{P}^{\mathrm{inv, fin}}(\widetilde{E_h}^\N)} 
\big\{ \Phi(Q) - I^{\mathrm{que}}_{h,\tr}(Q) \big\}
\end{align}
(let $h\downarrow 0$ along a suitable subsequence, followed by $\varepsilon
\downarrow 0$ and $k\to\infty$, and use \eqref{eq_Aepskh.unlikely} and
(\ref{eq:toomanywrongloops1}--\ref{eq:toomanywrongloops2})). 

Analogous arguments yield 
\begin{align} 
\label{eq:eNPhiRNasympt_lower}
\liminf_{N\to\infty} \frac1N \log \E\big[ e^{N\Phi([R_N]_\tr)} \mid X \big] 
\geq \limsup_{h\downarrow 0} 
\sup_{Q \in \mathcal{P}^{\mathrm{inv, fin}}((\widetilde{E_h})^\N)} 
\big\{ \Phi(Q) - I^{\mathrm{que}}_{h,\tr}(Q) \big\}.
\end{align}
Indeed, we can simply restrict the sum in \eqref{eq:expPhiRNhtr} to $\underline{j}$'s 
with $j_1 < \cdots < j_N$, so that the approximation argument is in fact a little easier 
because we need not pass to the $\underline{\hat\jmath}$'s. 

Finally, combine (\ref{eq:eNPhiRNasympt_upper}--\ref{eq:eNPhiRNasympt_lower}) to 
obtain \eqref{eq:LambdaPhilimit1tr}. 

\paragraph{Step 2b.} 
Next we consider the case $\mathrm{supp}(\rho)=[s_*,\infty)$ with $s_*>0$ and indicate 
the changes compared to Step 2a. To some extent this case is easier than the case $s_*=0$, 
since for coarse-graining level $h<s_*$ no ``empty'' word can appear in the coarse-graining 
scheme. On the other hand, when implementing a replacement similar to \eqref{eq:estwhbyrhohgr}, 
it can happen that an integral $\int \bar{\rho}(t_k-t_{k-1}) \1_{(h(j_k-1),h j_k]}(t) \,dt_k$ gets 
mapped to $\lceil \rho \rceil_h(h(j_k-j_{k-1}))=0$ even though the true contribution of that 
integral to \eqref{eq:expPhiRNhtr} is strictly positive (namely, when $h (j_k-j_{k-1}) \leq s_* \leq h(j_k-j_{k-1}+1)$).
The idea to remedy this problem is to replace $\lceil \rho \rceil_h(h(j_k-j_{k-1}))$ by a sum of 
``neighbouring'' weights of $\lceil \rho \rceil_h$ and to suitably control the overcounting incurred 
by this replacement. The details are as follows. 

Fix $h>0$ and $s_{*,h} = \lceil s_* \rceil_h$. For $N\in N$, consider $\underline{j}=(j_1,\dots,j_N)$ 
as appearing in the sum in \eqref{eq:expPhiRNhtr}. We say that $k \in \{1,\dots,N\}$ is ``problematic''
when $h(j_k-j_{k-1}) \in \{ s_{*,h}-1, s_{*,h}, s_{*,h}+1\}$, and ``relaxable'' when $j_k-j_{k-1} \geq 2$ and 
\begin{equation} 
\max_{m=-1,0,1} 
\left| \log \frac{\lceil \rho \rceil_h(h(j_k-j_{k-1}+m))}{\lceil \rho \rceil_h(h(j_k-j_{k-1}))} \right| \leq 2.
\end{equation}
Write $K_{\text{pro}}(\underline{j}) = \{ 1 \leq k \leq N \colon\, k\; \text{problematic}\}$ and $K_{\text{rel}}
(\underline{j}) = \{ 1 \leq k \leq N \colon\,k\; \text{relaxable}\}$. Try to construct an injection $f_{\text{rel},
\underline{j}}\colon\,K_{\text{pro}} \to K_{\text{rel}}$ with the property $f_{\text{rel},\underline{j}}(k) > k$ 
as follows: 
\begin{itemize}
\item[]
Start with an empty ``stack'' ${\sf s}$. For $k=1,\dots,N$ successively: when $k$ is problematic, 
push $k$ on ${\sf s}$; when $k$ is relaxable and ${\sf s}$ is not empty, pop the top element, say 
$k'$, from ${\sf s}$ and put $f_{\text{rel},\underline{j}}(k')=k$; when $k$ is neither problematic nor 
relaxable, proceed with the next $k$. 
\end{itemize}
We say that $\underline{j}$ is ``good'' when the above procedure terminates with an empty stack 
(in particular, $f_{\text{rel},\underline{j}}(k')$ is defined for all $k' \in K_{\text{pro}}$) and 
\begin{equation} 
\sum_{k \in K_{\text{pro}}} \big( f_{\text{rel},\underline{j}}(k) - k \big) \leq \varepsilon N
\end{equation}
(in particular, $\# K_{\text{pro}}(\underline{j}) \leq \varepsilon N$), and also $\# \{ 1 \leq k \leq N 
\colon\, j_k-j_{k-1} \not\in A(h)\} \leq \varepsilon N$.  For a given good $\underline{j}$, consider 
the set of all $\underline{\tilde\jmath}=(\tilde\jmath_1,\dots,\tilde\jmath_N)$ obtainable by setting 
\begin{align} 
\tilde\jmath_k=j_k+\Delta_k, \; \tilde\jmath_{f_{\text{rel},\underline{j}}(k)}
= j_{f_{\text{rel},\underline{j}}(k)}-\Delta_k \quad 
\text{with}\;\: \Delta_k \in \{-1,0,1\} \quad \text{for}\;\:k \in K_{\text{pro}},
\end{align}
and $\tilde\jmath_k=j_k$ for $k \not\in(K_{\text{pro}} \cup f_{\text{rel},\underline{j}}(K_{\text{pro}}))$. 
Note that a given good $\underline{j}$ corresponds to at most $3^{\varepsilon N}$ different 
$\underline{\tilde\jmath}$'s and that, for any such $\underline{\tilde\jmath}$,
\begin{align} 
&\Big| N\Phi\big([R_{N;h j_1,\dots,h j_N}(X)]_\tr\big) 
- N\Phi\big([R_{N;h \tilde \jmath_1,\dots,h \tilde \jmath_M}(X)]_\tr\big) \Big| \notag \\
&\qquad \leq \ell \| \Phi \|_\infty 
\sum_{k \in K_{\text{pro}}} \big( f_{\text{rel},\underline{j}}(k) - k \big) 
\leq \varepsilon N \ell \| \Phi \|_\infty .
\end{align}

With $w_h(j_1,\dots,j_N)$ defined in \eqref{eq:wh.weights}, we now see that (analogously to the 
argument prior to \eqref{eq:estwhbyrhohgr}) for any good $\underline{j}$, 
\begin{align} 
\label{eq:estwhbyrhohgr.c2}
w_h(\underline{j}) \leq e^{\varepsilon \eta_0 N} e^{\eta(h) N} 
2^{\varepsilon N} \sum_{\underline{\tilde\jmath} \; \text{corresp.\ to}\; \underline{j}} 
\; \prod_{i=1}^{N} \lceil \rho \rceil_h \big(h(\tilde \jmath_i - 
\tilde \jmath_{i-1})\big). 
\end{align}
Moreover, we have 
\begin{align} 
\label{eq:toomanywrongloops3}
\limsup_{h \downarrow 0} \limsup_{N\to\infty} \frac1N \log 
\Pr\big( (\lceil T_1 \rceil_h, \dots, \lceil T_N \rceil_h) 
\; \text{not good} \, \big| \, X \big) = - \infty. 
\end{align}
To check \eqref{eq:toomanywrongloops3}, let $S_k$ be the size of the stack ${\sf s}$ in 
the $k$-th step of the above construction when we use $j_k=\lceil T_k \rceil_h$, and note 
that $(\lceil T_1 \rceil_h, \dots, \lceil T_N \rceil_h)$ is good when $\sum_{k=1}^N S_k 
< \varepsilon N$. A comparison of $(S_k)_{k\in\N}$ with a (reflected) random walk on 
$\N_0$ that draws its steps from $\{0, \pm 1 \}$,  where $(+1)$-steps have a very small 
probability ($\leq \int_{s_*}^{s_*+2h} \bar{\rho}(t)\, dt$) and $(-1)$-steps have a very large 
probability ($\rho(A_h)$) when not from $0$, shows that $\limsup_{h \downarrow h} \frac1N 
\log \Pr(\sum_{k=1}^N S_k \geq \varepsilon N) = -\infty$ for every $\varepsilon >0$. We can 
then estimate similarly as in \eqref{eq:eNPhiRNasympt_upper0}, to obtain 
\eqref{eq:eNPhiRNasympt_upper} for the case $s_*>0$ as well. 

Analogous arguments also yield the lower bound in \eqref{eq:eNPhiRNasympt_lower}.

\paragraph{Step 3.} 
We next verify that the limits in \eqref{eq:LambdaPhilimit3} exist. Note that 
\begin{equation}
| \Phi(R_N) - \Phi([R_N]_\tr)| \leq \|\Phi\|_\infty 
\frac1N \#\big\{ \text{loops among the first $N$ loops that are longer than $\tr$} \big\},
\end{equation} 
which can be made arbitrarily small (also on the exponential scale, via a suitable 
annealing argument that uses that loop lengths are i.i.d.). A similar estimate holds for 
$| \Phi([R_N]_{\tr}) - \Phi([R_N]_{\tr'})|$ with $\tr < \tr'$. This shows that 
$\Lambda_{0,\tr}(\Phi)$ forms a Cauchy sequence as $\tr\to\infty$. 
\end{proof}

\begin{remark}
The arguments in Steps {\rm 2a} and {\rm 2b} can be combined to yield the same results when 
assumption \eqref{ass:rhobar.reg0} is relaxed to assumption \eqref{ass:rhobar.reg-mg}. 
Indeed, for a given coarse-graining level $h$, \eqref{ass:rhobar.reg-mg} gives rise to finitely 
many types of ``problematic points'' that can be handled similarly as in Step {\rm 2b} (combined 
with arguments from Step {\rm 2a} when $a_1=0$). 
\end{remark}

\paragraph{Step 4.} 
We close by deriving the estimate on Brownian increments over randomly
drawn short time intervals that was used in \eqref{eq:limsuplogPrAN}
in Step 2. The intuitive idea is that even though there are
arbitrarily large increments over short time intervals somewhere on
the Brownian path, it is extremely unlikely to hit these when sampling
along an independent renewal process. The proof employs a suitable
annealing argument.

Recall $D_{j,h}$ from (\ref{eq:defDjh}). For $h>0$ fixed, the $D_{j,h}$'s are i.i.d.\ 
and equal in law to $\sqrt{h}D_{1,1} = \sqrt{h} \sup_{0\leq s\leq 1} |X_s|$ by 
Brownian scaling. 

\begin{lemma} 
\label{lem:expmomentsDsum}
Let $T=(T_i)_{i\in\N}$ be a continuous-time renewal process with interarrival law 
$\rho$ satisfying $\mathrm{supp}(\rho) \subset [h,\infty)$. For $\lambda \geq 0$
and $k \in \N_0$, define
\begin{align} 
\xi(\lambda,h) = \limsup_{N\to\infty} \frac1N \log 
\E\left[ \exp\Big[ \lambda \sum_{i=1}^N \sum_{m=0}^k D_{\lceil T_i/h \rceil+m,h} \Big] \, 
\Big| \, \sigma(D_{j,h}, j \in \N)\right],
\end{align}
which is $\geq 0$ and a.s.\ constant by Kolmogorov's $0$-$1$-law. Then 
\begin{align} 
\lim_{h \downarrow 0} \xi(\lambda,h) = 0 \qquad \forall\,\lambda \geq 0. 
\end{align}
\end{lemma}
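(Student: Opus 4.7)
The plan is to prove the lemma by combining an annealing argument with a Borel--Cantelli consolidation. Write
$$Z_N(X) = \E\Big[\exp\Big(\lambda \sum_{i=1}^N \sum_{m=0}^k D_{\lceil T_i/h \rceil + m,\, h}\Big) \,\Big|\, \sigma(D_{j,h}, j \in \N)\Big],$$
so that $\xi(\lambda, h) = \limsup_{N\to\infty} \tfrac{1}{N} \log Z_N(X)$. Since this limsup is already known to be a.s.\ constant, any a.s.\ upper bound extracted from a Markov / Borel--Cantelli argument applied to $\E[Z_N(X)]$ will translate into a deterministic upper bound on $\xi(\lambda, h)$. Thus the task reduces to controlling the annealed expectation $\E[Z_N(X)] = \E_{X,T}\bigl[\exp(\lambda \sum_{i,m} D_{\lceil T_i/h \rceil + m, h})\bigr]$ as a function of $h$.

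The key observation is that the support condition $\mathrm{supp}(\rho) \subset [h,\infty)$ forces $\lceil T_i/h \rceil$ to be strictly increasing in $i$. Consequently, in the multiset $\{\lceil T_i/h \rceil + m : 1 \leq i \leq N,\, 0 \leq m \leq k\}$ each index $j \in \N$ occurs at most $k+1$ times; writing $n_j(T)$ for this multiplicity, one has $n_j(T) \leq k+1$ and $|\{j : n_j(T) \geq 1\}| \leq N(k+1)$. The variables $(D_{j,h})_{j}$ are independent of $T$ and mutually independent for distinct $j$ (they depend on disjoint Brownian increments), and Brownian scaling gives $D_{j,h} \stackrel{d}{=} \sqrt{h}\, \tilde{D}$ with $\tilde{D} = \sup_{0 \leq s \leq 1} |X_s|$. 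Setting $\chi(u) = \E[\exp(u\tilde{D})]$, which is finite for every $u \geq 0$ by the Gaussian tail of $\tilde{D}$ and satisfies $\chi(0)=1$, conditioning on $T$ yields
$$\E_X\Big[\prod_{i=1}^N \prod_{m=0}^k e^{\lambda D_{\lceil T_i/h \rceil + m, h}} \,\Big|\, T\Big] = \prod_j \chi\bigl(n_j(T)\lambda\sqrt{h}\bigr).$$

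Since $\chi$ is non-decreasing on $[0,\infty)$ and $\chi \geq 1$ there (because $\tilde{D} \geq 0$), the pointwise (in $T$) estimates $n_j(T) \leq k+1$ and $|\{j : n_j(T) \geq 1\}| \leq N(k+1)$ give
$$\E[Z_N(X)] \leq \chi\bigl((k+1)\lambda\sqrt{h}\bigr)^{N(k+1)}.$$
For any $a > (k+1)\log\chi((k+1)\lambda\sqrt{h})$, Markov's inequality then yields $\Pr(Z_N(X) > e^{Na}) \leq e^{-N[a - (k+1)\log\chi((k+1)\lambda\sqrt{h})]}$, which is summable in $N$. By Borel--Cantelli, $Z_N(X) \leq e^{Na}$ eventually a.s., and hence $\xi(\lambda, h) \leq a$. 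Taking $a \downarrow (k+1)\log\chi((k+1)\lambda\sqrt{h})$ and then $h \downarrow 0$, the continuity of $\chi$ at $0$ forces $\xi(\lambda, h) \to 0$.

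The only subtle point is the multiplicity bound $n_j(T) \leq k+1$, which is precisely where the hypothesis $\mathrm{supp}(\rho) \subset [h,\infty)$ enters: without it the indices $\lceil T_i/h \rceil$ could repeat and the annealed factorisation over independent Brownian increments would no longer give an $O(N(k+1))$ number of factors. Once that structural fact is in place, the rest is a routine combination of independence over disjoint time intervals, a Gaussian exponential moment for $\tilde{D}$, and Borel--Cantelli.
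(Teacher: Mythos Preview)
Your proof is correct and follows essentially the same annealing-plus-Borel--Cantelli strategy as the paper: use the support condition to ensure the indices $\lceil T_i/h\rceil$ are strictly increasing, exploit independence of the $D_{j,h}$ over disjoint Brownian increments to factorise the annealed moment, bound via $\chi(u)=\E[e^{u\sup_{[0,1]}|X|}]$, and finish with Markov and Borel--Cantelli. The only minor difference is that the paper passes through the second moment (squaring $Z_N$ and applying conditional Jensen) while you control the first moment directly; your route is marginally simpler and yields the slightly sharper bound $\xi(\lambda,h)\le (k+1)\log\chi((k+1)\lambda\sqrt{h})$, and you treat general $k$ explicitly whereas the paper writes out only $k=0$.
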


\begin{proof} 
We consider only the case $k=0$, the proof for $k\in\N$ being analogous. 
Abbreviate $\mathscr{G}_h=\sigma(D_{j,h}, j \in \N)$, and let 
\begin{equation} 
\chi(u) = \E\Big[ \exp \big[ u \, {\textstyle\sup_{\,0 \leq t \leq 1} |X_t|} \big] \Big], 
\quad u \in \R. 
\end{equation}
Note that $\chi(\cdot)$ is finite and satisfies $\lim_{u\to 0} \chi(u) = 1$.
We have 
\begin{equation}
\begin{aligned} 
\E\bigg[ \E\bigg[ \exp\Big[ \lambda \sum_{i=1}^N D_{\lceil T_i/h \rceil,h} 
\Big] \, \Big| \, \mathscr{G}_h \bigg]^2 \bigg] 
&\leq \E\bigg[ \exp\Big[ 2\lambda \sum_{i=1}^N D_{\lceil T_i/h \rceil,h}
\Big] \bigg]\\ 
&= \E\big[ \exp[2\lambda D_{1,h}] \big]^N = \chi\big(2\lambda \sqrt{h}\big)^N.
\end{aligned}
\end{equation}
Thus, for any $\epsilon>0$, 
\begin{equation}
\begin{aligned} 
& \Pr\left( \E\left[ \exp\Big[ \lambda \sum_{i=1}^N D_{\lceil T_i/h \rceil,h} 
\Big] \, \Big| \, \mathscr{G}_h \right]^2 
\geq \big( \chi\big(2\lambda \sqrt{h}\big) + \epsilon \big)^N \right) \\
& \leq \, \big( \chi\big(2\lambda \sqrt{h}\big) + \epsilon \big)^{-N} 
\E\left[ \E\left[ \exp\Big[ \lambda \sum_{i=1}^N D_{\lceil T_i/h \rceil,h}
\Big] \, \Big| \, \mathscr{G}_h \right]^2 \right] 
\leq \left( \frac{\chi\big(2\lambda \sqrt{h}\big)}{\chi\big(2\lambda \sqrt{h}\big) + \epsilon} \right)^N,
\end{aligned}
\end{equation}
which is summable in $N$. The Borel-Cantelli lemma therefore yields 
\begin{equation}
\limsup_{N\to\infty} \frac1N \log 
\E\left[ \exp\Big[ \lambda \sum_{i=1}^N D_{\lceil T_i/h \rceil,h} 
\Big] \, \Big| \, \mathscr{G}_h \right] 
\leq \frac12 \log \chi\big(2\lambda \sqrt{h}\big). 
\end{equation}
\end{proof}


\subsection{Proof of Proposition~\ref{prop:qLDPtrunc1}}
\label{ss:prop2}

\begin{lemma} 
\label{lemma:Iquetrregularised}
For $\tr \in \N$ and $Q \in \mathcal{P}^{\mathrm{inv}}(F_{0,\tr}^\N)$,
\begin{align} 
\label{eq:Iquetrregularised}
I^{\mathrm{que}}_\tr(Q) 
= \lim_{\varepsilon \downarrow 0} \, \limsup_{h \downarrow 0} \, 
\inf\Big\{ I^{\mathrm{que}}_{h,\tr}(Q')\colon\, Q' \in B_\varepsilon(Q) 
\cap \mathcal{P}^{\mathrm{inv}}((\widetilde{E}_{h,\tr})^\N) \Big\},
\end{align}
where $h \downarrow 0$ along $2^{-m}$, $m\in \N$.
\end{lemma}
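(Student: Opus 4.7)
The plan is to prove two matching inequalities: an \emph{upper bound}
\begin{equation}
\lim_{\varepsilon\downarrow 0}\limsup_{h\downarrow 0}\inf\bigl\{I^{\mathrm{que}}_{h,\tr}(Q')\colon Q'\in B_\varepsilon(Q)\cap\mathcal{P}^{\mathrm{inv}}((\widetilde{E}_{h,\tr})^\N)\bigr\}\le I^{\mathrm{que}}_\tr(Q)
\end{equation}
by exhibiting an explicit coarse-grained approximant $Q_h\to Q$ with $I^{\mathrm{que}}_{h,\tr}(Q_h)\to I^{\mathrm{que}}_\tr(Q)$, and a matching \emph{lower bound} obtained from lower semi-continuity properties of the two building blocks of $I^{\mathrm{que}}_{\cdot,\tr}$. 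Throughout I take $h=2^{-m}$ with $m\in\N$ so that $\tr/h\in\N$.

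For the upper bound I would construct $Q_h$ as follows. Given $y=(t_i,f_i)_{i\in\N}\in F_{0,\tr}^\N$, define the coarse-grained word sequence $\Pi_h(y)=(\lceil t_i\rceil_h,\tilde f_i)$ where $\tilde f_i$ is the piecewise-constant-on-$h$-intervals interpolation of $f_i$ extended to length $\lceil t_i\rceil_h$ (using the value at the break-point for the added sub-interval), and set $Q_h=Q\circ\Pi_h^{-1}\in\mathcal{P}^{\mathrm{inv}}((\widetilde{E}_{h,\tr})^\N)$. Three convergences must be verified: (i) $Q_h\to Q$ weakly (hence eventually $Q_h\in B_\varepsilon(Q)$), which follows from $d_F(\Pi_h y^{(i)},y^{(i)})\to 0$ uniformly on compact sets, together with dominated convergence; (ii) $H(Q_h\mid Q_{\lceil\rho\rceil_h,\WM,\tr})\to H(Q\mid Q_{\rho,\WM,\tr})$; and (iii) $m_{Q_h}H(\Psi_{Q_h,h}\mid\WM)\to m_Q H(\Psi_Q\mid\WM)$. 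For (ii), $\Pi_h$ is essentially a measurable partition refinement as $h\downarrow 0$, and the specific relative entropy should be compatible with this refinement because the reference measure $Q_{\lceil\rho\rceil_h,\WM,\tr}$ is exactly the push-forward of $Q_{\rho,\WM,\tr}$ under $\Pi_h$ when we couple carefully (so equality $H(Q_h\mid Q_{\lceil\rho\rceil_h,\WM,\tr})=H(Q\circ\Pi_h^{-1}\mid Q_{\rho,\WM,\tr}\circ\Pi_h^{-1})$ plus monotone convergence of relative entropy under refinement does the job; this is precisely the sort of statement treated in the appendix on specific relative entropy). For (iii), $m_{Q_h}\to m_Q$ because $\lceil t\rceil_h\le t+h$, and $\Psi_{Q_h,h}$ converges to $\Psi_Q$ (again by a dominated-convergence argument comparing the discrete sum form in \eqref{eq:definitionPsiQh} with the continuous integral form in \eqref{eq:PsiQcont}); convergence of $H(\Psi_{Q_h,h}\mid\WM)$ then follows from the same kind of refinement/coarse-graining identity for specific relative entropy on path space.

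For the lower bound I would argue as follows. Fix $\varepsilon>0$ and pick, for each $h$, a near-minimiser $Q'_h\in B_\varepsilon(Q)\cap\mathcal{P}^{\mathrm{inv}}((\widetilde{E}_{h,\tr})^\N)$. Since all measures are supported on words of length at most $\tr$, exponential tightness is automatic and we may pass along a subsequence $h_k\downarrow 0$ to a weak limit $\widehat Q\in\overline{B_\varepsilon(Q)}\cap\mathcal{P}^{\mathrm{inv}}(F_{0,\tr}^\N)$. Using lower semi-continuity of specific relative entropy (standard) and of $Q\mapsto m_Q H(\Psi_Q\mid\WM)$ (treated in the entropy appendix and implicit in the fact that $I^{\mathrm{que}}$ is a rate function), together with the compatibility relations $H(Q'_{h_k}\mid Q_{\lceil\rho\rceil_{h_k},\WM,\tr})\ge H(\widehat Q\mid Q_{\rho,\WM,\tr})-o(1)$ and $m_{Q'_{h_k}}H(\Psi_{Q'_{h_k},h_k}\mid\WM)\ge m_{\widehat Q}H(\Psi_{\widehat Q}\mid\WM)-o(1)$ (the second one requiring a careful comparison of the sum and integral forms of $\Psi$, but uniform on $F_{0,\tr}^\N$ since truncation bounds the word length), one obtains $\liminf_{k\to\infty} I^{\mathrm{que}}_{h_k,\tr}(Q'_{h_k})\ge I^{\mathrm{que}}_\tr(\widehat Q)$. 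Finally, sending $\varepsilon\downarrow 0$ forces $\widehat Q\to Q$, and lower semi-continuity of $I^{\mathrm{que}}_\tr$ (inherited from that of its two summands) yields $\liminf I^{\mathrm{que}}_\tr(\widehat Q)\ge I^{\mathrm{que}}_\tr(Q)$, completing the lower bound.

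The main obstacle I anticipate is step (iii) of the upper bound together with its lower-bound counterpart: one needs precise control of how the stationarised measure behaves under coarse-graining, namely that the sum-in-$i$ form \eqref{eq:definitionPsiQh} converges to the integral-in-$u$ form \eqref{eq:PsiQcont} \emph{and} that this convergence is compatible with specific relative entropy w.r.t.\ $\WM$. Luckily, on the truncated space $F_{0,\tr}^\N$ word-lengths are bounded by $\tr$, so mean word-length is also bounded and one can reduce to a comparison of two explicit path-space measures whose densities w.r.t.\ $\WM$ are related by an averaging over a shift of length at most $h$; the appendices on specific relative entropy (in particular the continuous/discretisation lemmas for $\Psi_Q$) are designed to deliver exactly this control.
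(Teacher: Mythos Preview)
Your lower-bound argument is essentially correct and matches the paper's: on the truncated space $F_{0,\tr}^\N$ both $Q'\mapsto m_{Q'}$ and $Q'\mapsto\Psi_{Q'}$ are continuous, so $Q'\mapsto m_{Q'}H(\Psi_{Q'}\mid\WM)$ is lower semi-continuous; together with $H(\Psi_{Q',h}\mid\WM)\ge H(\Psi_{Q'}\mid\WM)$ (convexity of specific relative entropy plus $\theta^s$-invariance of $\WM$) and Lemma~\ref{lemma:hregularised1}(2) for the first summand with varying reference measure, this yields the desired inequality.

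The upper-bound construction, however, has two genuine gaps. First, your claim in step (ii) that $Q_{\lceil\rho\rceil_h,\WM,\tr}$ is the push-forward of $Q_{\rho,\WM,\tr}$ under $\Pi_h$ is false: under your map a Brownian word of length $t$ is extended constantly on $[t,\lceil t\rceil_h]$, so the image path is \emph{not} Brownian on the full interval $[0,\lceil t\rceil_h]$, whereas under $Q_{\lceil\rho\rceil_h,\WM,\tr}$ it is. Hence $H(Q_h\mid Q_{\lceil\rho\rceil_h,\WM,\tr})$ and $H(Q_h\mid Q_{\rho,\WM,\tr}\circ\Pi_h^{-1})$ differ, and the ``monotone refinement'' argument does not apply to the quantity you actually need. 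Second, and more seriously, in step (iii) specific relative entropy is only \emph{lower} semi-continuous, so weak convergence $\Psi_{Q_h,h}\to\Psi_Q$ gives $\liminf H(\Psi_{Q_h,h}\mid\WM)\ge H(\Psi_Q\mid\WM)$, which is the wrong direction for an upper bound; nothing in the appendix delivers the opposite inequality along a general coarse-graining sequence.

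The paper resolves both issues simultaneously by a different construction (Lemma~\ref{lem:cg.2lev.blockapprox}): instead of rounding each word length individually, one rounds the cumulative times $T_i$ to a \emph{uniformly randomly shifted} grid, $\tilde T_i=\lceil T_i\rceil_{h,U}$ with $U\sim\mathrm{Unif}[0,1]$ independent. The crucial feature is that then $\kappa(\tilde Y)=\theta^{Uh}\kappa(Y)$, so the concatenated path content is merely a random shift of the original and $\Psi_{\lceil Q\rceil_h,h}=\Psi_Q$ \emph{exactly}. This makes step (iii) trivial. For the first summand one then shows $H(\lceil Q\rceil_h\mid Q_{\lceil\rho\rceil_h,\WM})\le H(Q\mid Q_{\rho,\WM})+r_Q(h)$ with $r_Q(h)\to 0$, using the regularity assumptions \eqref{ass:rhobar.reg2}--\eqref{ass:rhobar.reg1} on $\bar\rho$ to control the change of reference measure; this is where those assumptions enter and why a purely measure-theoretic ``refinement'' argument is insufficient.
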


\noindent 
Note that after $\widetilde{E}_{h,\tr}$ is identified with a subset of $F_{0,\tr}$ (see \eqref{iotahdef}), 
\eqref{eq:Iquetrregularised} states that $I^{\mathrm{que}}_{h,\tr}$ converges to $I^{\mathrm{que}}_\tr$ 
as $h \downarrow 0$ in the sense of Gamma-convergence. 

\begin{proof}[Proof of Lemma~\ref{lemma:Iquetrregularised}] 
Note that, when restricted to $\mathcal{P}^\mathrm{inv}(F_{0,\tr}^{\otimes\N})$, 
\begin{equation} 
\label{eq:PsiQcont1}
\text{both } Q \mapsto m_Q \text{ and } Q \mapsto \Psi_Q \text{ are continuous} 
\end{equation}
(by dominated convergence), while this is not true when $Q$ is allowed to vary over the 
whole of $\mathcal{P}^\mathrm{inv}(F^{\otimes\N})$. A more general statement is the 
following: if $\wlim_{n\to\infty} Q_n = Q$ and $\{ \mathscr{L}_{Q_n}(\tau_1)\colon\, 
n \in \N\}$ are uniformly integrable, then $\lim_{n\to\infty} m_{Q_n} = m_Q$ and 
$\wlim_{n\to\infty} \Psi_{Q_n} = \Psi_Q$.

In the proof we use several properties of specific relative entropy derived in Appendix~\ref{entropy}. 
Let $Q \in \mathcal{P}^{\mathrm{inv}}(F_{0,\tr}^\N)$, and abbreviate the right-hand side of 
\eqref{eq:Iquetrregularised} by $\widetilde{I}^\mathrm{que}_\tr(Q)$. Note that, by \eqref{eq:PsiQcont1} 
and the lower semi-continuity of $\Psi \mapsto H(\Psi \mid \WM)$, the map
\begin{equation}
\mathcal{P}^\mathrm{inv}(F_{0,\tr}^{\N}) \ni Q' \mapsto 
m_{Q'} H(\Psi_{Q'} \mid \WM)
\end{equation}
is lower semi-continuous. Hence, for any $\delta>0$, we have $m_{Q'} H(\Psi_{Q'} \mid \WM) \geq 
m_Q H(\Psi_Q \mid \WM) - \delta$ for all $Q' \in B_\varepsilon(Q) \cap  \mathcal{P}^{\mathrm{inv}}
(\widetilde{E}_{h,\tr}^\N)$ when $\varepsilon$ is sufficiently small (depending on $\delta$). Combine 
this with \eqref{eq:Hregularised1} in Lemma~\ref{lemma:hregularised1} in Appendix~\ref{entropy},  
and note that $\wlim Q_{h,\tr} = Q_\tr$ as $h\downarrow0$, to obtain $\widetilde{I}^\mathrm{que}_\tr(Q) 
\geq I^{\mathrm{que}}_{\tr}(Q)$.
\smallskip

For the reverse direction, we need to find $h_n>0$ with $\lim_{n\to\infty} h_n = 0$ 
and $Q'_n \in \mathcal{P}^{\mathrm{inv}}((\widetilde{E}_{h_n,\tr})^\N)$ with 
$\wlim_{n\to\infty} Q'_n = Q$ such that $\liminf_{n\to\infty} I^{\mathrm{que}}_{h_n,\tr}
(Q'_n) \le I^{\mathrm{que}}_\tr(Q)$. Here a complication stems from the fact that we 
must ensure that both parts of $I^{\mathrm{que}}_{h_n,\tr}(Q'_n)$, namely, $H(Q'_n \mid 
Q_{\lceil \rho \rceil_{h_n},\WM,\tr})$ and $H(\Psi_{Q'_n,h_n} \mid \WM)$, converge 
simultaneously. The proof is deferred to Lemma~\ref{lem:cg.2lev.blockapprox} in 
Appendix~\ref{entropy}. 
\end{proof}

We are now ready to give the proof of Proposition~\ref{prop:qLDPtrunc1}.

\begin{proof} 
Fix $\tr\in\N$. Denote the right-hand side of \eqref{eq:Lambda0tr} by $\tilde{\Lambda}_\tr(\Phi)$. 
Let $\Phi\colon\,\mathcal{P}^{\mathrm{inv}}(F^\N) \to \R$ be of the form (\ref{eq:Phiform1}). 
For every $\delta > 0$ we can find a $Q^* \in \mathcal{P}^{\mathrm{inv}}(F_{0,\tr}^\N)$ such 
that $\Phi(Q^*) - I_\tr^{\mathrm{que}}(Q^*) \geq \tilde{\Lambda}_\tr(\Phi) - \delta$. For $\varepsilon>0$ 
sufficiently small (depending on $\delta$) we have $\big| \Phi(Q') - \Phi(Q^*)\big| \leq \delta$ 
for all $Q' \in B_\varepsilon(Q^*)$ and, by Lemma~\ref{lemma:Iquetrregularised}, 
\begin{align} 
\liminf_{h\downarrow 0} \, \inf\Big\{ I^{\mathrm{que}}_{h,\tr}(Q')\colon\, 
Q' \in B_\varepsilon(Q^*) \cap \mathcal{P}^{\mathrm{inv}}((\widetilde{E_h}_{,\tr})^\N) \Big\} 
\leq I^{\mathrm{que}}_\tr(Q^*) + \delta.
\end{align}
Thus 
\begin{align} 
\liminf_{h\downarrow 0} \, 
\sup \Big\{ \Phi(Q') - I^{\mathrm{que}}_{h,\tr}(Q')\colon\, 
Q' \in B_\varepsilon(Q^*) \cap \mathcal{P}^{\mathrm{inv}}((\widetilde{E_h}_{,\tr})^\N) \Big\} 
\geq \tilde{\Lambda}_\tr(\Phi) - 3\delta. 
\end{align}
Let $\delta\downarrow 0$ to obtain $\liminf_{h\downarrow 0} \Lambda_{h,\tr}(\Phi) = \Lambda_{0,\tr}(\Phi) 
\geq \tilde{\Lambda}_\tr(\Phi)$. 

For the reverse direction, pick for $h \in (0,1)$ a maximiser $Q^*_h \in \mathcal{P}^{\mathrm{inv}}
((\widetilde{E_h}_{,\tr})^\N)$ of the variational expression appearing in the right-hand side of
\eqref{eq:Phihtrform}, i.e., $\Phi(Q^*_h) - I^{\mathrm{que}}_{h,\tr}(Q^*_h) = \Lambda_{h,\tr}(\Phi)$. 
This is possible because $\Phi-I^{\mathrm{que}}_{h,\tr}$ is upper semi-continuous and bounded 
from above, and $I^{\mathrm{que}}_{h,\tr}$ has compact level sets. We claim that 
\begin{equation} 
\label{claim:Q*htight} 
\text{the family} \; \big\{ Q^*_h : h \in (0,1) \big\} \subset 
\mathcal{P}^{\mathrm{inv}}(F^\N)\; \text{is tight}.
\end{equation}
Assuming (\ref{claim:Q*htight}), we can choose a sequence $h(n) \downarrow 0$ such that 
\begin{equation}
\begin{aligned}
&\lim_{n\to\infty} \Big[ \Phi(Q^*_{h(n)}) - I^{\mathrm{que}}_{h(n),\tr}(Q^*_{h(n)}) \Big] 
= \limsup_{h\downarrow 0} \Lambda_{h,\tr}(\Phi),\\ 
&\wlim_{n\to\infty} Q^*_{h(n)} = \widetilde{Q} 
\;\; \text{for some} \; \widetilde{Q} \in \mathcal{P}^{\mathrm{inv}}(F^\N).
\end{aligned}
\end{equation} 
Then $\lim_{n\to\infty}\Phi(Q^*_{h(n)})=\Phi(\widetilde{Q})$ because $\Phi$ is continuous, and 
$\liminf_{n\to\infty} I^{\mathrm{que}}_{h(n),\tr}(Q^*_{h(n)}) \geq I^{\mathrm{que}}_\tr(\widetilde{Q})$ 
by Lemma~\ref{lemma:Iquetrregularised}. Hence 
\begin{align} 
\Lambda_{0,\tr}(\Phi) = \limsup_{h\downarrow 0} \Lambda_{h,\tr}(\Phi) = \lim_{n\to\infty} \Big[ 
\Phi(Q^*_{h(n)}) - I^{\mathrm{que}}_{h(n),\tr}(Q^*_{h(n)}) \Big]
\leq \Phi(\widetilde{Q}) - I^{\mathrm{que}}_\tr(\widetilde{Q}) 
\leq \tilde{\Lambda}_\tr(\Phi).
\end{align}

It remains to prove \eqref{claim:Q*htight}, which follows once we show that for each 
$N \in \N$ the family of projections $\pi_N(Q^*_h) \in \mathcal{P}^{\mathrm{inv}}(F^N)$, 
$h\in(0,1)$, is tight (because $F^\N$ carries the product topology; see Ethier and 
Kurtz~\cite[Chapter 3, Proposition 2.4]{EK86}). Let $M= \|\Phi\|_\infty+1$. Then 
necessarily $H( Q^*_h \mid [Q_{\lceil \rho \rceil_h,\WM}]_\tr) \leq M$, and hence 
$h(\pi_N(Q^*_h) \mid \pi_N([Q_{\lceil \rho \rceil_h,\WM}]_\tr) \leq N M$ for all $h \in (0,1)$. 
Since $\pi_N([Q_{\lceil \rho \rceil_h,\WM}]_\tr) = ([q_{\lceil \rho \rceil_h,\WM}]_\tr)^{\otimes N}$
converges weakly to $\pi_N ([Q_{\rho,\WM}]_\tr) = ([q_{\rho,\WM}]_\tr)^{\otimes N}$ as 
$h \downarrow 0$, the family $\{\pi_N([Q_{\lceil \rho \rceil_h,\WM}]_\tr)\colon\,h \in (0,1)\}$ 
is tight, and so for any $\varepsilon > 0$ we can find a compact $\mathcal{C} \subset F^N$ 
such that $\pi_N([Q_{\lceil \rho \rceil_h,\WM}]_\tr)(\mathcal{C}^c) \leq \exp[-(NM + \log 2)/\varepsilon]$ 
uniformly in $h\in(0,1)$. By a standard entropy inequality (see \eqref{ineq:entropy} in
Appendix~\ref{entropy}), for all $h\in (0,1)$ we have
\begin{align} 
\pi_N(Q^*_h)(\mathcal{C}^c) \leq 
\frac{\log 2 + h\big(\pi_N(Q^*_h) \mid \pi_N([Q_{\lceil \rho \rceil_h,\WM}]_\tr)\big)}
{\log\Big(1+\big(\pi_N([Q_{\lceil \rho \rceil_h,\WM}]_\tr)(\mathcal{C}^c)\big)^{-1} \Big)} 
\leq \frac{\log 2 + MN}{\log\big(1+\exp[(N M + \log 2)/\varepsilon]\big)} 
\leq \varepsilon. 
\end{align}
This proves the representation \eqref{eq:Lambda0tr} of the limit $\Lambda_{0,\tr}(\Phi)$ 
from \eqref{eq:LambdaPhilimit1tr}. From \eqref{eq:LambdaPhilimit1tr} and \eqref{eq:Lambda0tr}, 
plus the exponential tightness in Proposition~\ref{prop:LambdaPhilimit1tr}, we obtain the 
LDP via Bryc's inverse of Varadhan's lemma.
\end{proof}


\subsection{Proof of Proposition~\ref{prop:Ique.tr.cont}}
\label{prop3}
 
 
\subsubsection{Proof of part (1)}

We first verify \eqref{eq:lemma:Ique.tr.cont1}, i.e., for $Q \in \mathcal{P}^{\mathrm{inv,fin}}(F^\N)$,
\begin{align} 
\label{eq:relentrsumlim}
\lim_{\tr\to\infty} I^{\mathrm{que}}_\tr([Q]_\tr) 
& = \lim_{\tr\to\infty} \Big[ 
H([Q]_\tr \mid [Q_{\rho,\WM}]_\tr) + 
(\alpha-1) m_{[Q]_\tr} H(\Psi_{[Q]_\tr} \mid \WM) \Big] \notag \\ 
& = H(Q \mid Q_{\rho,\WM}) + (\alpha-1) m_Q H(\Psi_Q \mid \WM).
\end{align}
The proof comes in 5 Steps.

\medskip\noindent
{\bf Step 1.} 
Note that $\lim_{\tr\to\infty} H([Q]_\tr \mid [Q_{\rho,\WM}]_\tr) = H(Q \mid Q_{\rho,\WM})$ 
by the projective property of word truncations, $\lim_{\tr\to\infty} m_{[Q]_\tr} = m_Q<\infty$ 
by dominated convergence, and 
\begin{equation}
\liminf_{\tr\to\infty} H(\Psi_{[Q]_\tr} \mid \WM) \geq H(\Psi_Q \mid \WM)
\end{equation}
by the lower semi-continuity of specific relative entropy together with $\wlim_{\tr\to\infty} 
\Psi_{[Q]_\tr} = \Psi_Q$. Hence, to obtain \eqref{eq:relentrsumlim} it remains to prove 
that
\begin{align} 
\label{ineq:HPsiQtr.upper}
\limsup_{\tr\to\infty} H(\Psi_{[Q]_\tr} \mid \WM) \leq H(\Psi_Q \mid \WM).
\end{align}

\medskip\noindent
{\bf Step 2.} \label{prop:Ique.tr.cont.part1step2}
To prove \eqref{ineq:HPsiQtr.upper}, we use coarse-graining. For every $h>0$ we 
can identify $\widetilde{E_h}$ with $F_h \subset F$ (recall \eqref{def:Fh}). In order 
to represent $Q\in\mathcal{P}^{\mathrm{inv,fin}}(F^\N)$ by a shift-invariant law on 
$(F_h)^\N$, we discretise the cut-points onto a \emph{uniformly shifted} grid of width 
$h$, as follows. For $t \in \R$, $h>0$ and $u\in [0,1)$, define (compare with 
Section~\ref{trun})
\begin{align} 
\label{eq:t.hu}
\lceil t \rceil_{h,u} = \min\big\{ (k+u)h \colon k \in \Z, (k+u)h \geq t \big\}
\quad \big(= \lceil t -uh \rceil_h + uh \big).
\end{align}
Draw $Y=(Y^{(i)})_{i\in\N}=((\tau_i, f_i))_{i\in\N}$ from law $Q$, and let $U$ be an independent 
random variable with uniform distribution on $[0,1]$. Put $T_0=0$, $T_n=\tau_1+\cdots+\tau_n$, 
$n \in \N$, 
\begin{equation}
\tilde{T}_i = \lceil T_i \rceil_{h,U}, \quad i \in \N_0,
\qquad \tilde{\tau}_i = \tilde{T}_i - \tilde{T}_{i-1}, \;
\tilde{f}_i = \big(\theta^{\tilde{T}_{i-1}} \kappa(Y)\big)(\, \cdot \wedge \tilde{\tau}_i), \quad i\in\N.
\end{equation} 
(Note that it may happen that $\tilde{\tau}_i=0$. We can remedy this by allowing ``empty words'', 
i.e., by formally passing to $\widehat{F}$ as in Section~\ref{subsect:notations}.) Write $\lceil Q 
\rceil_h$ for the distribution of $\tilde{Y}=(\tilde{Y}^{(i)})_{i \in \N}=((\tilde{\tau}_i, 
\tilde{f}_i))_{i \in \N}$ obtained in this way. We view $\lceil Q \rceil_h$ as an element of 
$\mathcal{P}^{\mathrm{inv,fin}}((F_h)^\N)$. To check the shift-invariance of $\lceil Q \rceil_h$, 
note that by construction an initial part of length $S_1=\tilde{T}_0-T_0 = U h$ of the content of 
the first word is removed (in a two-sided situation, this part would be added at the end of the 
zero-th word). The corresponding quantity for the second word is $S_2=\tilde{T}_1-T_1 = \lceil 
T_1 \rceil_{h,U} - T_1$. Observe that, for measurable $A \subset [0,h)$ and $B \subset [0,\infty)$, 
\begin{equation}
\Pr(S_2 \in A, T_1 \in B) = \int_B \Pr(T_1 \in dt) \int_{[0,1]} du\,
\1_A\big( \lceil t -uh \rceil_h - (t-uh) \big) = \frac1h\, \Pr(T_1 \in B)\, \lambda(A), 
\end{equation}
i.e., $S_2$ is distributed as $U h$ and independent of $Y$, and so $(\tilde{Y}^{(i+1)})_{i\in\N}$ 
again has law $\lceil Q \rceil_h$. This settles the shift-invariance. The key feature of the 
construction of $\lceil Q \rceil_h$ is that $\kappa(\tilde{Y}) = (\theta^{U h} \kappa)(Y)$, so that 
\begin{equation}
\label{Psihrel}
\Psi_{\lceil Q \rceil_h, h} = \Psi_Q,
\end{equation} 
and therefore
\begin{align} 
\label{HhHrel}
H(\Psi_{\lceil Q \rceil_h, h} \mid \WM) = H(\Psi_Q \mid \WM).
\end{align} 
Thus, \eqref{HhHrel} gives us a coarse-grained version of the right-hand of \eqref{ineq:HPsiQtr.upper}.

\medskip\noindent
{\bf Step 3.}
If $\tr$ is an integer multiple of $h$, then the coarse-graining $\lceil Q \rceil_h \in 
\mathcal{P}^{\mathrm{inv,fin}}((F_h)^\N)$ of $Q \in \mathcal{P}^{\mathrm{inv,fin}}(F^\N)$ 
defined in Step 2 commutes with the word length truncation $[ \cdot ]_\tr$, i.e., $[ \lceil Q 
\rceil_h ]_\tr = \lceil [ Q ]_\tr \rceil_h$. This is a deterministic property of the construction in 
\eqref{eq:t.hu}. Indeed, fix $u \in [0,1)$ and $h$ with $\tr = Mh$ for some $M\in \N$, consider 
$t_{i-1}<t_i$ with $t_i-t_{i-1} > \tr$ (so that in the un-coarse-grained truncation procedure 
the  $i$-th loop length would be replaced by $\tr$), let $k_{i-1}, k_i \in \N$ be such 
that $\lceil t_{i-1} \rceil_{h,u} = (k_{i-1}+u)h$ and $\lceil t_i \rceil_{h,u} = (k_i+u)h$. 
When we first truncate and then coarse-grain, the $i$-th point becomes $\lceil t_{i-1} 
+ \tr \rceil_{h,u} = (k_{i-1}+M+u)h$. When we first coarse-grain and then truncate, 
the $i$-th point becomes $\lceil t_{i-1} \rceil_{h,u} + \big( (\lceil t_i \rceil_{h,u} 
- \lceil t_{i-1} \rceil_{h,u}) \wedge M h \big) = (k_{i-1}+u)h + M h$, which is the same.

\medskip\noindent
{\bf Step 4.} 
Let $h=2^{-M}$, define $\lceil Q \rceil_h \in \mathcal{P}^{\mathrm{inv,fin}}((F_h)^\N)$ as 
in Step 2, and write $Q'_h = \lceil Q \rceil_h \circ \iota_h^{-1}$ for the same object 
considered as an element of $\mathcal{P}^{\mathrm{inv,fin}}((\widetilde{E_h})^\N)$ 
(recall (\ref{def:Eh}--\ref{iotahdef})). Write $\nu_h=\mathscr{L}\big((X_{\cdot \wedge h})\big)$ 
for the Wiener measure on $E_h$. Then $m_{Q'_h} = m_{\lceil Q \rceil_h}/h$ (the mean 
word length counted in $h$-letters), while
\begin{align} 
\label{HPsihrel}
H(\Psi_{Q'_h} \mid \nu_h^{\otimes \N}) = H(\Psi_{\lceil Q \rceil_h,h} \mid \WM),
\end{align}
by construction,  and 
\begin{align}
\label{Qtrhrel}
\lceil [ Q ]_\tr \rceil_h = [ \lceil Q \rceil_h ]_\tr = [Q'_h]_{(\tr/h)} \circ \iota_h,
\end{align} 
where the first equality follows from the commutation property in Step 3 and the 
second equality is a truncation of the words from $Q'_h$ as elements of $\widetilde{E_h}$. 

\medskip\noindent
{\bf Step 5.}
Fix $\varepsilon>0$ and let $\tr_0 = \tr_0(Q,\varepsilon)$ 
be so large that 
\begin{align} 
\E_Q\big[ \big( |Y^{(1)}|-\tr \big)_+ \big] < \tfrac13 \varepsilon m_Q, 
\qquad \tr \geq \tr_0.
\end{align}
Then, for $0<h<\tfrac{1}{24} \varepsilon m_Q$, we have
\begin{align}
\label{eq:justso} 
\E_{\lceil Q \rceil_h}\big[ h\big( \tfrac{|Y^{(1)}|}{h}-\tfrac{\tr}{h} \big)_+ \big] 
< \tfrac13 \varepsilon  m_Q + 2h < \tfrac12 \varepsilon m_{\lceil Q \rceil_h}. 
\end{align}
Divide both sides of \eqref{eq:justso} by $h$, and observe that the continuum word of length 
$|Y^{(1)}|$ under $\lceil Q \rceil_h$ corresponds to the discrete word of $|Y^{(1)}|/h$ 
$h$-letters under $Q_h'$, to obtain 
\begin{align} 
\E_{Q_h'}\big[ \big( |Y^{(1)}|-\tfrac{\tr}{h} \big)_+ \big]
< \tfrac12 \varepsilon m_{Q_h'}.
\end{align}
This estimate allows us to use Lemma~\ref{lem:trcontinuous} in Appendix~\ref{entropy}, 
which says that for every $0<\varepsilon<\tfrac12$,
\begin{align} 
\label{bepsest}
(1-\varepsilon) \Big[ H(\Psi_{[Q_h']_{(\tr/h)}} \mid \nu_h^{\otimes \N}) 
+ b(\varepsilon) \Big] \leq H(\Psi_{Q_h'} \mid \nu_h^{\otimes \N})
\end{align}
with $b(\varepsilon)= - 2\varepsilon + [\varepsilon \log \varepsilon + (1-\varepsilon) 
\log (1-\varepsilon)]/(1-\varepsilon)$. However, by (\ref{HPsihrel}--\ref{Qtrhrel}) we have
\begin{equation}
H(\Psi_{[Q'_h]_{(\tr/h)}} \mid \nu_h^{\otimes \N}) 
= H(\Psi_{\lceil [Q]_\tr \rceil_h,h} \mid \WM) = H(\Psi_{[Q]_\tr} \mid \WM).
\end{equation} 
Substitute this relation into \eqref{bepsest} and use (\ref{HhHrel}--\ref{HPsihrel}), to obtain 
\begin{equation}
(1-\varepsilon) \Big[ H(\Psi_{[Q]_\tr} \mid \WM)
+ b(\varepsilon) \Big] \leq H(\Psi_Q \mid \WM).
\end{equation}
Now let $\varepsilon \downarrow 0$ and use that $\lim_{\varepsilon\downarrow 0} 
b(\varepsilon)=0$, to obtain \eqref{ineq:HPsiQtr.upper}.


\subsubsection{Proof of part (2)}
\label{subsect:prop:Ique.tr.cont.part2}

Fix $Q \in \mathcal{P}^{\mathrm{inv}}(F^\N)$ with $m_Q= \infty$ and $H(Q \mid Q_{\rho,\WM}) 
< \infty$. We construct $\widetilde{Q}_\tr \in \mathcal{P}^{\mathrm{inv,fin}}(F^\N)$, $\tr\in\N$, 
satisfying \eqref{eq:lemma:Ique.tr.approx2} via a ``smoothed truncation'' that has the same 
concatenated word content as its ``hard truncation'' equivalent. The proof comes in 5 Steps.
 
\medskip\noindent
{\bf Step 1.}
It will we be convenient to consider the two-sided scenario, i.e., we regard $Q$ as a shift-invariant 
probability measure on $F^\Z$. Define
\begin{equation} 
\chi_\tr\colon\,F_{0,\tr}^\Z \times [0,1]^\Z \to F^\Z,
\qquad 
\chi_\tr\colon\,\big( (f_i,\tau_i)_{i\in\Z}, (u_i)_{i\in\Z} \big) \mapsto (\tilde f_i, \tilde \tau_i)_{i\in\Z},
\end{equation} 
as follows. Put $t_0=0$, $t_i=t_{i-1}+\tau_i$, $t_{-i}=t_{-i+1}-\tau_{-i+1}$  for $i\in\N$, and $\varphi 
= \kappa\big( (f_i,\tau_i)_{i\in\Z}\big)$, set
\begin{align} 
\tilde{t}_i = 
\begin{cases} t_i-u_i & \text{if} \; \tau_i=\tr,\\
t_i & \text{if} \; \tau_i <\tr,
\end{cases}
\end{align}
$\tilde\tau_i=t_i-t_{i-1}$ and $\tilde f_i(\cdot)=\varphi( (\,\cdot \wedge \tilde\tau_i)+t_{i-1})$ 
for $i\in\Z$. In words, the total concatenated word content remains unchanged, and if the length 
of the $i$-th word $\tau_i$ equals $\tr$, then its end-point $t_i$ is moved $u_i$ to the left. Put 
$\widetilde{Q}_\tr = ([Q]_\tr \otimes \mathrm{Unif}[0,1]^{\otimes \Z}) \circ \chi_\tr^{-1} \in 
\mathcal{P}^{\mathrm{inv}}(F^\Z)$. By construction, $\Psi_{\widetilde{Q}_\tr} = \Psi_{[Q]_\tr}$ 
and $m_{\widetilde{Q}_\tr} = m_{[Q]_\tr}$. In particular, 
\begin{align} 
m_{\widetilde{Q}_\tr} H(\Psi_{\widetilde{Q}_\tr} \mid \WM) 
= m_{[Q]_\tr} H(\Psi_{[Q]_\tr} \mid \WM) .
\end{align}

\medskip\noindent
{\bf Step 2.}
Write $\widetilde{Q}_\tr^{\mathrm{ref}} = ([q_{\rho,\WM}]_\tr^{\otimes\Z} \otimes \mathrm{Unif}
[0,1]^{\otimes \Z}) \circ \chi_\tr^{-1}$ for the result of the analogous operation on the reference 
measure $(q_{\rho,\WM})^{\otimes \Z}$. We have $\wlim_{\tr\to\infty} \widetilde{Q}_\tr = Q$ and 
$\wlim_{\tr\to\infty} ([q_{\rho,\WM}]_\tr^{\otimes\Z} \otimes \mathrm{Unif}[0,1]^{\otimes \Z}) \circ 
\chi_\tr^{-1} = (q_{\rho,\WM})^{\otimes \Z}$, and hence
\begin{align} 
\label{eq:string}
\liminf_{\tr\to\infty} H( \widetilde{Q}_\tr \mid \widetilde{Q}_\tr^{\mathrm{ref}} ) 
& \geq \sup_{\varepsilon > 0} \liminf_{\tr\to\infty} 
\inf_{Q' \in B_\varepsilon(Q)} H( Q' \mid \widetilde{Q}_\tr^{\mathrm{ref}} ) \notag \\
& \geq H(Q \mid (q_{\rho,\WM})^{\otimes \Z}) 
= \lim_{\tr\to\infty} H([Q]_\tr \mid [q_{\rho,\WM}]_\tr^{\otimes \Z}),
\end{align}
where we use Lemma~\ref{lemma:hregularised1}~(2) in the second inequality. (Note: Inspection 
of the proof of Lemma~\ref{lemma:hregularised1}~(2) shows that the inequality ``$\leq$'' in 
\eqref{eq:Hregularised1} also holds for $Q$'s that are not product.) The last equality in \eqref{eq:string}
holds because the truncations $[\,\cdot\,]_\tr$ form a projective family (see \cite[Lemma~8.1]{BiGrdHo10}).
As specific relative entropy can only decrease under the operation of taking image measures, we 
have  $H( \widetilde{Q}_\tr \mid \widetilde{Q}_\tr^{\mathrm{ref}} ) \leq H([Q]_\tr \mid [q_{\rho,
\WM}]_\tr^{\otimes \Z}) \leq H(Q \mid q_{\rho,\WM}^{\otimes \Z})$, so $\limsup_{\tr\to\infty} 
H( \widetilde{Q}_\tr \mid \widetilde{Q}_\tr^{\mathrm{ref}} ) \leq H(Q \mid q_{\rho,\WM}^{\otimes \Z})$ 
and, indeed, 
\begin{align} 
\lim_{\tr\to\infty} H( \widetilde{Q}_\tr \mid \widetilde{Q}_\tr^{\mathrm{ref}} ) 
= \lim_{\tr\to\infty} H([Q]_\tr \mid [q_{\rho,\WM}]_\tr^{\otimes \Z}) 
= H(Q \mid q_{\rho,\WM}^{\otimes \Z}).
\end{align}
The proof of \eqref{eq:lemma:Ique.tr.approx2} is complete once we show that
\begin{align} 
\label{eq:HtildeQtr.bd}
H( \widetilde{Q}_\tr \mid q_{\rho,\WM}^{\otimes \Z}) \leq 
H( \widetilde{Q}_\tr \mid \widetilde{Q}_\tr^{\mathrm{ref}} ) + o(1),
\end{align}
since, by part (1), 
\begin{align} 
\widetilde{I}^{\mathrm{que}}(\widetilde{Q}_\tr) = 
H( \widetilde{Q}_\tr \mid q_{\rho,\WM}^{\otimes \Z}) + 
m_{\widetilde{Q}_\tr} H(\Psi_{\widetilde{Q}_\tr} \mid \WM).
\end{align}

\medskip\noindent
{\bf Step 3.} 
It remains to verify \eqref{eq:HtildeQtr.bd}. Note that 
\begin{align} 
H( \widetilde{Q}_\tr \mid q_{\rho,\WM}^{\otimes \Z}) 
- H( \widetilde{Q}_\tr \mid \widetilde{Q}_\tr^{\mathrm{ref}} ) 
& = \lim_{N\to\infty} \frac1N \E_{\widetilde{Q}_\tr}\bigg[ 
\log\frac{d\pi_N \widetilde{Q}_\tr}{d q_{\rho,\WM}^{\otimes N}} 
- \log\frac{d\pi_N \widetilde{Q}_\tr}{d\pi_N \widetilde{Q}_\tr^{\mathrm{ref}}} \bigg] 
\notag \\
& = \lim_{N\to\infty} \frac1N \E_{\widetilde{Q}_\tr}
\bigg[ \log\frac{d\pi_N \widetilde{Q}_\tr^{\mathrm{ref}}}{d q_{\rho,\WM}^{\otimes N}} \bigg],
\end{align}
and that, by construction, ${d\pi_N \widetilde{Q}_\tr^{\mathrm{ref}}}/{dq_{\rho,\WM}^{\otimes N}}$ 
is a function of the word lengths $\tilde{\tau}_1,\dots,\tilde{\tau}_N$ only (indeed, because of the 
i.i.d.\ property of Brownian increments it easy to see that under both laws the word contents 
given their lengths are the same, namely, independent pieces of Brownian paths). Write 
$\widetilde{R}_\tr^{\mathrm{ref}}$ for the law of the sequence of word lengths under 
$\widetilde{Q}_\tr^{\mathrm{ref}}$. Then we must show that 
\begin{align} 
\label{eq:ElogdRtildedrho.bd}
\limsup_{\tr\to\infty} 
\lim_{N\to\infty} \frac1N \E_{\widetilde{Q}_\tr}
\bigg[ \log\frac{d\pi_N \widetilde{R}_\tr^{\mathrm{ref}}}{d \rho^{\otimes N}}
(\tilde\tau_1,\dots,\tilde\tau_N) \bigg] \leq 0. 
\end{align}

\medskip\noindent
{\bf Step 4.} 
Denote the density of $\pi_N \widetilde{R}_\tr^{\mathrm{ref}}$ with respect 
to Lebesgue measure on $\R_+^N$ by $\widetilde{f}_{\tr,N}^{\mathrm{ref}}$. 
Consider fixed $\tilde\tau_1,\dots,\tilde\tau_N$, and decompose into 
maximal stretches of $\tilde\tau_i$'s with values in $(\tr-1,\tr+1)$ 
(note that under $\chi_\tr$ no word can become longer than $\tr+1$, 
while when $\tilde\tau_i < \tr-1$ the corresponding word is not truncated, i.e., 
$\tilde{t}_i=t_i$). Thus, there are $0 \leq M < N$, $i'_1 \leq j'_2 < i'_2 \leq j'_2 
< \cdots < i'_M \leq j'_M \leq N$ such that $\{ 1 \leq i \leq N\colon\,\tilde\tau_i 
\in (\tr-1,\tr+1) \} = \cup_{k=1}^M [i'_k, j'_k] \cap \N$. Observe that, by construction, 
$\widetilde{f}_{\tr,N}^{\mathrm{ref}}(\tilde\tau_1,\dots,\tilde\tau_N)$ can be 
decomposed into a product of $\prod_{j \colon \tilde\tau_j\leq \tr-1} \bar{\rho}
(\tilde\tau_j)$ and $M$ further factors involving the $\tilde\tau_i$'s from these 
stretches, where the $k$-th factor depends only on $(\tilde\tau_i\colon\,
i'_k \leq i \leq j'_k)$. We claim that 
\begin{align} 
\label{eq:ftrN.ref.bd}
\frac{\widetilde{f}_{\tr,N}^{\mathrm{ref}}(\tilde\tau_1,\dots,\tilde\tau_N)}{
\prod_{j=1}^N \bar{\rho}(\tilde\tau_j)} 
\leq \prod_{k=1}^M \big( C_1 \tr^{1+\epsilon} \big)^{j'_k-i'_k+1} 
= \big( C_1 \tr^{1+\epsilon} \big)^{\# \{ 1 \leq i \leq N\colon\, \tilde\tau_i > \tr-1 \}}
\end{align}
for some $C_1=C_1(\rho) <\infty$ and $\epsilon=\epsilon(\rho) \in [0,1]$ uniformly 
in $\tr$ for $\tr$ sufficiently large. To see why \eqref{eq:ftrN.ref.bd} holds, consider for 
example the first stretch and assume for simplicity that $i'_1=1<j'_1$ and that we 
know that the $0$-th word is not truncated (i.e., $\tilde{t}_0=t_0=0$). Let $\ell 
\leq j'_1+1$, and pretend we know that the first $\ell-1$ words are truncated (i.e., 
$\tau_1=\cdots=\tau_{\ell-1}=\tr$), while the $\ell$-th word is not ($\tau_\ell<\tr$). 
Then $\tilde\tau_1=\tr-u_1$ and $\tilde\tau_i=\tr-u_i+u_{i-1}$ for $2 \leq i \leq \ell-1$, 
and so $u_i=\sum_{j=1}^i (\tr-\tilde\tau_j)$ for $1 \leq i \leq \ell-1$ and $\tau_\ell
=\tilde\tau_\ell-u_{\ell-1} = \tilde\tau_\ell-\sum_{j=1}^{\ell-1} (\tr-\tilde\tau_j)$. This 
case contributes to $\widetilde{f}_{\tr,\ell}^{\mathrm{ref}}$ the term 
\begin{align} 
\label{eq:termf.ell.ref} 
\rho([\tr,\infty))^{\ell-1} \bar\rho\Big(\tilde\tau_\ell-
{\textstyle \sum_{j=1}^{\ell-1} (\tr-\tilde\tau_j)}\Big) \prod_{i=1}^{\ell-1} 
\1_{[0,1]}\Big( {\textstyle \sum_{j=1}^{i} (\tr-\tilde\tau_j)}\Big).
\end{align}
Note that, by \eqref{ass:rhodensdecay}, we have $\eqref{eq:termf.ell.ref}/\prod_{j=1}^\ell 
\bar{\rho}(\tilde\tau_j) \le C_2 (C_3 \tr^{1+\epsilon})^{\ell-1}$ for some $C_2=C_2(\rho), 
C_3=C_3(\rho) <\infty$ and $\epsilon=\epsilon(\rho) \in [0,1]$ uniformly in $\tr$ for $\tr$
sufficiently large. The contribution of any given stretch of length $j'_k-i'_k+1$ can be 
written as a sum of at most $2^{j'_k-i'_k+1}$ cases where the indices of the truncated 
words are specified. Each such case can be estimated by a suitable product of 
terms as in \eqref{eq:termf.ell.ref}. Furthermore, outside the stretches the words are 
necessarily untruncated and thus contribute $\bar{\rho}(\tilde\tau_i)$ to 
$\widetilde{f}_{\tr,N}^{\mathrm{ref}}$, which cancels with the corresponding 
term in $\rho^{\otimes N}$. 

\medskip\noindent
{\bf Step 5.}
From \eqref{eq:ftrN.ref.bd} and the shift-invariance of $\widetilde{Q}_\tr$ we obtain that 
\begin{align} 
\label{eq:ElogdRtildedrho.bd2}
\lim_{N\to\infty} \frac1N \E_{\widetilde{Q}_\tr}
\bigg[ \log\frac{d\pi_N \widetilde{R}_\tr^{\mathrm{ref}}}{d \rho^{\otimes N}}
(\tilde\tau_1,\dots,\tilde\tau_N) \bigg] \leq 
C(1+\log \tr) Q(\tau_1>\tr-1). 
\end{align}
Now, $h(\mathscr{L}_Q(\tau_1) \mid \rho) \leq H(Q \mid q_{\rho,\WM}^\N) < \infty$ by 
assumption. Because of \eqref{ass:rhodensdecay}, this implies that $\E_Q[\log(\tau_1)] 
< \infty$,  and hence that $Q(\tau_1>\tr) = o(1/\log \tr)$. Therefore \eqref{eq:ElogdRtildedrho.bd2} 
implies \eqref{eq:ElogdRtildedrho.bd}. 

\hfill $\qed$


\section{Removal of Assumptions~\eqref{ass:rhobar.reg2}--\eqref{ass:rhobar.reg1}}
\label{removeass}

We give a brief sketch of the proof only, leaving the details to the reader. Assumptions
\eqref{ass:rhobar.reg2}--\eqref{ass:rhobar.reg1} are satisfied when $\bar{\rho}$ satisfies 
\eqref{ass:rhobar.reg0} and varies regularly at $\infty$ with index $\alpha$. The latter condition 
is stronger than \eqref{ass:rhodensdecay}. To prove the claim under \eqref{ass:rhodensdecay} 
alone, note that for every $\delta>0$ and $\alpha'<\alpha$ there exists a probability density $\bar{\rho}'=\bar{\rho}'
(\delta,\alpha')$ such that $\bar{\rho} \leq (1+\delta)\bar{\rho}'$, $\bar{\rho}'$ varies regularly at 
$\infty$ with index $\alpha'$, and $\bar{\rho}'(t)dt$ converges weakly to $\bar{\rho}(t)dt$ as $\delta
\downarrow 0$ and $\alpha' \uparrow \alpha$. Since the quenched LDP holds for $\bar{\rho}'$, 
we can proceed similarly as in \cite[Sections 3.6 and 5]{BiGrdHo10} to get the quenched LDP 
for $\bar{\rho}$. 

More precisely, for $B \subset \mathcal{P}^\mathrm{inv}(F^\N)$ we may write (recall
\eqref{RNdef} and \eqref{eq:defRNphi})
\begin{align}
P(R_N \in B \mid X) &= \int_{0 \leq t_1 < \cdots < t_N < \infty} dt_1 \cdots dt_N\,
\bar{\rho}(t_1)\,\bar{\rho}(t_2-t_1) \cdots \bar{\rho}(t_N-t_{N-1})\\[-2ex]
&\hspace{18em} \times 1_B\big(R_{N;t_1,\ldots,t_N}(X)\big), \notag
\end{align}
and estimate $\bar{\rho}(t_1) \leq (1+\delta)\bar{\rho}'(t_1)$, etc., to get $P(R_N \in B \mid X) 
\leq (1+\delta)^N\,P'(R_N \in B \mid X)$, where $P,P'$ have $\bar{\rho},\bar{\rho}'$ as excursion 
length distributions. Let $\mathcal{C} \subset \mathcal{P}^\mathrm{inv}(F^\N)$ be a closed set, and let
$\mathcal{C}^{(\varepsilon)}$ be its $\varepsilon$-blow-up. Then the LDP upper bound for $\bar{\rho}'$
gives
\begin{equation}
\limsup_{N\to\infty} \frac{1}{N} \log P(R_N \in \mathcal{C}^{(\varepsilon)}\mid X) 
\leq \log (1+\delta) - \inf_{Q \in \mathcal{C}^{(\varepsilon)}} I^\mathrm{que}_{\bar{\rho}'}(Q) 
\qquad X\text{-a.s.},
\end{equation} 
where the lower index $\bar{\rho}'$ indicates the excursion length distribution. Let $\delta \downarrow 0$
and $\alpha' \uparrow \alpha$, and use Lemma~\ref{lemma:hregularised1}~(2), to get 
\begin{equation}
\limsup_{N\to\infty} \frac{1}{N} \log P(R_N \in \mathcal{C}^{(\varepsilon)}\mid X) 
\leq - \inf_{Q \in \mathcal{C}^{(2\varepsilon)}} I^\mathrm{que}_{\bar{\rho}}(Q) \qquad X\text{-a.s.}
\end{equation} 
Finally, let $\varepsilon \downarrow 0$ and use the lower semi-continuity of $I^\mathrm{que}_{\bar{\rho}}$
to get the LDP upper bound for $\bar{\rho}$.
\smallskip

An analogous argument works for the LDP lower bound: 
Now we pick $\alpha' > \alpha$, $\delta > 0$ and  a probability density 
$\bar{\rho}'=\bar{\rho}' (\delta,\alpha')$ such that 
$\bar{\rho} \geq (1-\delta)\bar{\rho}'$, and $\bar{\rho}'$ satisfies the same 
conditions as above. Arguing as before, we obtain for any 
open $\mathcal{O} \subset \mathcal{P}^\mathrm{inv}(F^\N)$,
\begin{equation}
\liminf_{N\to\infty} \frac{1}{N} \log P(R_N \in \mathcal{C}^{(\varepsilon)}\mid X) 
\geq - \inf_{Q \in \mathcal{O}} I^\mathrm{que}_{\bar{\rho}}(Q) \qquad X\text{-a.s.}
\end{equation}


\section{Proof of Theorems~\ref{mainthmboundarycases}--\ref{thmexp}}
\label{proofalpha1infty}

We again give a brief sketch of the proofs only, leaving 
many details to the reader. 
\smallskip

Theorem~\ref{mainthmboundarycases}(a), which says that for $\alpha=1$ the 
quenched rate function coincides with the annealed rate function, can 
be proved as follows: Since the claimed LDP upper bound holds automatically 
by the annealed LDP, it suffices to verify the matching lower bound.
For this we can argue as in the proof of the lower bound in 
Section~\ref{removeass}. 
For any $\alpha'>1$ and $\delta>0$ we can approximate $\bar{\rho}$ by 
a suitable $\bar{\rho}'=\bar{\rho}' (\delta,\alpha')$ such that 
$\bar{\rho} \geq (1-\delta)\bar{\rho}'$. Then, using Theorem~\ref{thm0:contqLDP} 
with $\bar{\rho}'$ and taking $\delta \downarrow 0$, $\alpha' \downarrow 1$, 
we see that for any open $\mathcal{O} \subset \mathcal{P}^\mathrm{inv}(F^\N)$, 
\begin{equation} 
\liminf_{N\to\infty} \frac{1}{N} \log P(R_N \in \mathcal{C}^{(\varepsilon)}\mid X) 
\geq - \inf_{Q \in \mathcal{O} \cap \mathcal{P}^{\mathrm{inv,fin}}(F^\N)} 
I^\mathrm{ann}(Q) \qquad X\text{-a.s.}
\end{equation} 
(recall \eqref{eq:Ique}). Finally note that any 
$Q \in \mathcal{P}^{\mathrm{inv}}(F^\N)$ with $H(Q \mid Q_{\rho,\WM}) < \infty$ 
can be approximated by a sequence $(Q_n) \subset \mathcal{P}^{\mathrm{inv,fin}}(F^\N)$ 
in such a way that $H(Q_n \mid Q_{\rho,\WM}) \to H(Q \mid Q_{\rho,\WM})$ 
to obtain the claim (using for example a ``smoothed truncation'' operation 
similar to Section~\ref{subsect:prop:Ique.tr.cont.part2}). 
\smallskip

Theorem~\ref{mainthmboundarycases}(b), which says that for $\alpha=\infty$ the
quenched rate function coincides with the annealed rate function on the set 
$\{Q\in\mathcal{P}^{\mathrm{inv}}(F^\N)\colon\,\lim_{\tr\to\infty} m_{[Q]_\tr} 
H( \Psi_{[Q]_\tr} \mid \WM) = 0\}$ and is infinite elsewhere, follows from
arguments analogous to \cite[Section 7, Part (b)]{BiGrdHo10}: 
For the upper bound, we can pick arbitrarily large $\alpha'>1$ and approximate 
$\bar{\rho} \leq (1+\delta) \bar{\rho}'$ with the help of a suitable probability 
density $\bar{\rho}'$ which has decay exponent $\alpha'$. Using 
Theorem~\ref{thm0:contqLDP} with $\bar{\rho}'$ and taking 
$\alpha' \uparrow \infty$, $\delta \downarrow 0$, we see that the upper bound 
holds with the claimed form of the rate function.
For the matching lower bound we can trace through the proof of the 
lower bound contained in Theorem~\ref{thm0:contqLDP} but replacing our 
``coarse-graining work horses'' Proposition~\ref{thm00:contqLDP} 
and Corollary~\ref{prop:qLDPhtr} (which rely on \cite[Cor.~1.6]{BiGrdHo10}) 
by versions that are suitable for $\alpha=\infty$ (which rely on 
\cite[Thm.~1.4~(b)]{BiGrdHo10} instead), still using a suitable 
truncation approximation of the quenched rate function analogous to the 
one proven in Proposition~\ref{prop:Ique.tr.cont}. This constitutes a way 
of rigorously implementing the ``first long string strategy'' from 
\cite[Section 4]{BiGrdHo10}, as explained in the heuristic given in item 0 of 
Section~\ref{disc}, through the coarse-graining approximation. 
\smallskip

Theorem~\ref{thmexp} follows from Theorem~\ref{mainthmboundarycases}(b) via 
an observation that is the analogue of \cite[Lemma 6]{Bi08}: subject to the exponential 
tail condition in \eqref{ass:rhoexp}, any $Q\in\mathcal{P}^{\mathrm{inv}}(F^\N)$ with 
$H(Q \mid Q_{\rho,\WM}) <\infty$ necessarily has $m_Q<\infty$. Because of this 
observation we can argue as follows. If $m_Q<\infty$, then $\lim_{\tr\to\infty} m_{[Q]_\tr} 
= m_Q$ and $\lim_{\tr\to\infty} \Psi_{[Q]_\tr} = \Psi_Q$ by dominated convergence
(recall \eqref{eq:PsiQcont}), which in turn imply that $\liminf_{\tr\to\infty} m_{[Q]_\tr} 
H(\Psi_{[Q]_\tr} \mid \WM) = m_Q H(\Psi_Q \mid \WM)$, as shown in Lemma~\ref{lem:trcontinuous} 
in Appendix~\ref{entropy}. The limit is zero if and only if $\Psi_Q = \WM$, which by \eqref{eq:calRchar} 
holds if and only if  $Q \in \cR_\WM$. This explains the link between \eqref{eq:ratefctalphainfty} 
and \eqref{eq:ratefctexptail}.


\appendix


\section{Basic facts about metrics on path space}
\label{metrics}

We metrise $F$, defined in \eqref{eq:defF} (and $F_h \subset F$ defined in \eqref{def:Fh}) as follows. 
Let $d_S(\phi_1, \phi_2)$ be a metric on $C([0,\infty))$ that generates Skorohod's $J_1$-topology 
on $D([0,\infty)) \supset C([0,\infty))$, allowing for a certain amount of ``rubber time'' (see e.g.\ 
Ethier and Kurtz~\cite[Section 3.5 and Eqs.\ (5.1--5.3)]{EK86}) 
\begin{equation}
\label{def:dS}
d_S(\phi_1, \phi_2) = \inf_{\lambda \in \Lambda} 
\bigg\{ \gamma(\lambda) \vee \int\nolimits_0^\infty 
e^{-u} \sup_{t\geq 0} \big| \phi_1(t \wedge u) - \phi_2(\lambda(t) \wedge u) 
\big| \, du \bigg\}, 
\end{equation} 
where $\Lambda$ is the set of Lipschitz-continuous bijections from $[0,\infty)$ into 
itself and 
\begin{equation} 
\gamma(\lambda) = \sup_{0 \leq s < t < \infty} 
\Big| \log \frac{\lambda(t)-\lambda(s)}{t-s} \Big|.
\end{equation}
With 
\begin{equation} 
\label{eq:metriconF}
d_F(y_1,y_2) = |t_1-t_2| + d_S(\phi_1,\phi_2)
\end{equation}
for $y_i=(t_i,\phi_i) \in F$, $(F,d_F)$ becomes complete and separable, and the same holds 
for $(F_h,d_F)$ for any $h>0$.
\medskip

\noindent {\bf Remark.\ } 
We might at first be inclined to metrise $F$ in a more straightforward way than 
(\ref{eq:metriconF}), e.g.\ via
\begin{equation} 
\label{eq:firstmetriconF}
d^{\mathrm{first}}_F(y_1,y_2) = |t_1-t_2| + \|\phi_1-\phi_2\|_\infty, 
\quad y_i=(t_i,\phi_i) \in F, \: i=1,2. 
\end{equation}
However, if we would use Lipschitz functions with $d_F$ replaced by $d^{\mathrm{first}}_F$ 
in (\ref{eq:g_Lipschitz}), then in the analogue of Lemma~\ref{obs:Rdiscdiff} we would be 
forced to use terms of the form $\sup_{s \geq 0} |\varphi(s+t \wedge t') - \varphi(s+ih 
\wedge jh)|$ in the right-hand side. When used for $\varphi=X$ (a realisation of Brownian 
motion as in Proposition~\ref{prop:LambdaPhilimit1tr}), this would in turn force us 
to control the increments of the Brownian motion not only locally near the beginning 
and the end of each loop, but uniformly inside loops. In fact, it seems plausible that 
an analogue of Proposition~\ref{prop:LambdaPhilimit1tr} where $d_F$ is replaced by 
$d^{\mathrm{first}}_F$ actually fails. Furthermore, note that we cannot arrange $d_S$ 
in such a way that, for $\phi \in C([0,\infty))$, $h>0$, $t_1 \leq t'_1 < t_2 \leq t_2'$ 
with $|t'_1-t_1| \leq h$, $|t'_2-t_2| \leq h$, 
\begin{align} 
\label{eq:dS_wishful1}
d_S\big( \phi((t_1+\cdot) \wedge t_2), 
\phi((t'_1+\cdot) \wedge t'_2)\big) \leq 2h 
+ \sup_{t_1 \leq s \leq t'_1} |\phi(s)-\phi(t'_1)| 
+ \sup_{t_2 \leq s \leq t'_2} |\phi(s)-\phi(t'_2)|.
\end{align}
This is why in Lemma~\ref{obs:Rdiscdiff} we need the freedom to use an extra $k$ 
and to ``look in a neighbourhood of the cut-points of size $kh$''.


\section{Basic facts about specific relative entropy}
\label{entropy}

In Section~\ref{ss:definitions} we recall the definition of (specific) relative entropy of two 
probability measures. In Section~\ref{ss:approximations} we prove various approximation 
results for (specific) relative entropy, which were used heavily in Sections~\ref{props}.
Especially the parts with $\Psi_Q$ require care because of the delicate nature of the
word concatenation map $Q \mapsto \Psi_Q$. The latter is looked at in closer detail
in Section~\ref{subs:towards.Ique.tr.cont}.


\subsection{Definitions}
\label{ss:definitions}

For $\mu,\nu$ probability measures on a measurable space $(S,\mathscr{S})$,
\begin{equation}
h(\mu \mid \nu) 
= 
\begin{cases} 
\int_S (\log\frac{d\mu}{d\nu})\,d\mu, 
&\text{if} \; \mu \ll \nu, \\
\infty, 
& \text{otherwise,}
\end{cases}
\end{equation}
is the relative entropy of $\mu$ w.r.t.\ $\nu$. When the measurable space is a 
Polish space $E$ equipped with its Borel-$\sigma$-algebra, we also have the 
representation (see e.g.\ \cite[Lemma 6.2.13]{DeZe98})
\begin{align} 
\label{eq:relentraslegendretransf}
h(\mu \mid \nu) 
= \sup_{f \in C_b(E)} \Big\{ \int f\,d\mu - \log \int e^f \, d\nu \Big\} 
= \sup_{\scriptstyle f\colon\, E \to \R \; \atop \scriptstyle \text{bounded measurable}} 
\Big\{ \int f\,d\mu - \log \int e^f \, d\nu \Big\} 
\end{align}
(and if $\mu \ll \nu$ with a bounded and uniformly positive density, then
the supremum in the right-hand side is achieved by $f=\log d\mu/d\nu$).

Equation \eqref{eq:relentraslegendretransf} implies the entropy inequality 
\begin{equation} 
\label{ineq:entropy}
\mu(A)\leq \frac{\log 2 + h(\mu \mid \nu)}{\log[1+1/\nu(A)]}
\end{equation} 
by choosing $f=\alpha \1_A$ and $\alpha=\log[1+1/\nu(A)]$ (see e.g.\ Kipnis and 
Landim~\cite[Appendix 1, Proposition 8.2]{KiLa99}).

For $Q \in \mathcal{P}^{\mathrm{inv}}(F^\N)$, 
\begin{align} 
\label{eq:SREwrtProd}
H(Q \mid (q_{\rho,\WM})^{\otimes\N}) 
= \lim_{N\to\infty} \frac1N h\big( \pi_N Q \mid (q_{\rho,\WM})^{\otimes N}\big)
= \sup_{N\in\N} \frac1N h\big(\pi_N Q \mid (q_{\rho,\WM})^{\otimes N}\big)
\end{align}
with $\pi_N$ the projection onto the first $N$ words, is the specific relative entropy of 
$Q$ w.r.t.\ $(q_{\rho,\WM})^{\otimes\N}$. Similarly, using the canonical filtration 
$(\mathscr{F}^C_t)_{t \ge 0}$ on $C([0,\infty))$, for a probability measure $\Psi$ on 
$C([0,\infty))$ with stationary increments we denote by 
\begin{align} 
\label{eq:SREwrtWM}
H(\Psi \mid \WM) = \lim_{t\to\infty} 
\frac{1}{t} h\big(\Psi_{|}{}_{\mathscr{F}^C_t} \mid \WM_{|}{}_{\mathscr{F}^C_t}\big)
= \sup_{t>0} \frac{1}{t} h\big(\Psi_{|}{}_{\mathscr{F}^C_t} \mid \WM_{|}{}_{\mathscr{F}^C_t}\big)
\end{align}
the specific relative entropy w.r.t.\ Wiener measure. See Appendix~\ref{contrelentr} for a
proof of \eqref{eq:SREwrtWM}.


\subsection{Approximations}
\label{ss:approximations}

Let $E$ be a Polish space. Equip $\mathcal{P}(E)$ with the weak topology (suitably metrised). 
$E^\N$ carries the product topology, and the set of shift-invariant probability measures 
$\mathcal{P}^\mathrm{inv}(E^\N)$ carries the weak topology (also suitably metrised).


\subsubsection{Blocks}

For $M\in\N$ and $r \in \mathcal{P}(E^M)$, denote by $r^{\otimes \N} \in \mathcal{P}(E^\N)$ the 
law of an infinite sequence obtained by concatenating $M$-blocks drawn independently from 
$r$ (i.e., we identify $(E^M)^\N$ and $E^\N$ in the obvious way), and write 
\begin{equation} 
\label{def:blockmeas}
\block_M(r) = \frac1M \sum_{j=0}^{M-1} r^{\otimes \N} \circ (\theta^j)^{-1} 
\; \in \mathcal{P}^\mathrm{inv}(E^\N)
\end{equation} 
for its stationary mean. 

\begin{lemma}
\label{lemma:HblockmeasQ}
For $Q = q^{\otimes \N} \in\mathcal{P}^\mathrm{inv}(E)$ and $r \in \mathcal{P}(E^M)$, 
\begin{equation} 
\label{eq:HblockmeasQ}
H\big(\block_M(r) \mid Q \big) = \frac1M h\big(r \mid \pi_M Q \big). 
\end{equation}
Moreover, for any $R \in \mathcal{P}^\mathrm{inv}(E)$, 
\begin{equation}
\label{eq:reconstrfromblocks}
\wlim_{M\to\infty} \block_M\big(\pi_M R\big) = R.
\end{equation}
\end{lemma}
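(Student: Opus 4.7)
The plan is to treat the two assertions of the lemma separately with mostly elementary tools.

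For \eqref{eq:HblockmeasQ}, I would exploit the product structure of $Q=q^{\otimes\N}$ via a chain-rule computation after adjoining a uniform ``phase'' random variable. Introduce the augmented measure $\widetilde\block_M(r)$ on $\{0,1,\dots,M-1\}\times E^\N$ whose marginal on the first coordinate is uniform and whose conditional law on $E^\N$, given phase $j$, is $r^{\otimes\N}\circ(\theta^j)^{-1}$; by construction its $E^\N$-marginal equals $\block_M(r)$. Applying the chain rule for relative entropy against the reference $\mathrm{Unif}\otimes\pi_N Q$ in the two possible orders (marginalise out the phase first, or marginalise out the sequence first), and using that the conditional entropy of the phase given the sequence is nonnegative and bounded by $\log M$, one obtains the sandwich
\begin{equation*}
\frac1M \sum_{j=0}^{M-1} h\bigl(\pi_N(r^{\otimes\N}\circ(\theta^j)^{-1}) \bigm| \pi_N Q\bigr) - \log M
\;\le\; h\bigl(\pi_N\block_M(r) \bigm| \pi_N Q\bigr)
\;\le\; \frac1M \sum_{j=0}^{M-1} h\bigl(\pi_N(r^{\otimes\N}\circ(\theta^j)^{-1}) \bigm| \pi_N Q\bigr).
\end{equation*}
Next, for $N=kM$, the independent $M$-block structure of $r^{\otimes\N}$ together with the product structure of $Q$ yield a direct computation: after the $j$-shift the first $kM$ coordinates decompose into the last $M-j$ coordinates of $B_1$, the $k-1$ full independent blocks $B_2,\dots,B_k$ drawn from $r$, and the first $j$ coordinates of $B_{k+1}$, whence additivity of relative entropy under independence gives
\begin{equation*}
h\bigl(\pi_{kM}(r^{\otimes\N}\circ(\theta^j)^{-1}) \bigm| \pi_{kM}Q\bigr)
= (k-1)\, h(r \mid \pi_M Q) + R_{k,j}
\end{equation*}
with $0 \le R_{k,j} \le 2 h(r \mid \pi_M Q)$ uniformly in $k$ and $j$ (marginals only decrease relative entropy). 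Dividing the sandwich by $N=kM$ and letting $k\to\infty$ with $M$ fixed makes both the $\log M$ and the $O(1)$ corrections vanish, yielding $H(\block_M(r) \mid Q) = \tfrac1M h(r \mid \pi_M Q)$.

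For \eqref{eq:reconstrfromblocks}, by density it suffices to test against cylinder functions $f(x) = g(x_1,\dots,x_k)$ with $g$ bounded continuous on $E^k$. Under $(\pi_M R)^{\otimes\N}\circ(\theta^j)^{-1}$, the first $k$ coordinates of the shifted sequence correspond to positions $j+1,\dots,j+k$ of the un-shifted block process; as soon as $j+k\le M$ these positions all lie within the first $M$-block (drawn from $\pi_M R$), and shift-invariance of $R$ then gives that their joint law equals $\pi_k R$. The remaining $k-1$ values of $j$ straddle a block boundary and contribute a uniformly bounded error of size $\|g\|_\infty$. Averaging over $j\in\{0,\dots,M-1\}$,
\begin{equation*}
\int f\,d\block_M(\pi_M R) = \frac{M-k+1}{M} \int f\,dR + O\!\left(\frac{k\|g\|_\infty}{M}\right) \;\longrightarrow\; \int f\,dR
\end{equation*}
as $M\to\infty$, which delivers the weak limit.

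The only genuine technical point is the careful bookkeeping of the $O(1)$ boundary contributions in part~(1), which is the reason for working with $N=kM$ and letting $k\to\infty$; part~(2) requires no tools beyond shift-invariance of $R$ and the independence of the blocks under $(\pi_M R)^{\otimes\N}$.
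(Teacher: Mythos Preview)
Your proof is correct and considerably more detailed than the paper's, which for \eqref{eq:HblockmeasQ} simply cites Gray~\cite[Theorem~8.4.1]{Gr09b} and F\"ollmer~\cite[Lemma~4.8]{Foe88} (and mentions as an alternative that one can spell out $d\pi_N\block_M(r)/dq^{\otimes N}$ explicitly and use concentration), and for \eqref{eq:reconstrfromblocks} just declares it obvious.

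Your route to \eqref{eq:HblockmeasQ} via the auxiliary uniform phase variable and the chain rule is a clean variant of the ``bare hands'' argument the paper alludes to: instead of computing the mixture density on $E^N$ directly and controlling it via concentration, you lift to the product space $\{0,\dots,M-1\}\times E^N$ where the density factorises, and the mixture cost is isolated as the bounded term $\le\log M$. Both approaches exploit the same block decomposition of $\pi_{kM}(r^{\otimes\N}\circ(\theta^j)^{-1})$, but yours sidesteps any asymptotic analysis of the density itself. Your argument for \eqref{eq:reconstrfromblocks} is exactly the elementary cylinder-function check the paper has in mind. One small remark: for the case $h(r\mid\pi_M Q)=\infty$ your bound $R_{k,j}\le 2h(r\mid\pi_M Q)$ is vacuous, but the lower sandwich still gives $h(\pi_{kM}\block_M(r)\mid\pi_{kM}Q)\ge (k-1)\cdot\infty-\log M=\infty$ for $k\ge 2$, so the conclusion holds trivially.
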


\begin{proof} 
This proof is standard. Equation \eqref{eq:HblockmeasQ} follows from the 
results in Gray \cite[Section 8.4, see Theorem 8.4.1]{Gr09b} by observing that 
$\block_M(r)$ is the asymptotically mean stationary measure of $r^{\otimes\N}$.
It is also contained in F\"ollmer\cite[Lemma 4.8]{Foe88},  or can be proved with 
``bare hands'' by explicitly spelling out $d\pi_N \block_M(r)/dq^{\otimes N}$ for 
$N \gg M$ and using suitable concentration arguments under $q^{\otimes N}$ 
as $N\to\infty$. Equation \eqref{eq:reconstrfromblocks} is obvious from the definition 
of weak convergence. 
\end{proof}


\subsubsection{Change of reference measure}

\begin{lemma} 
\label{lemma:hregularised1}
{\rm (1)} 
Let $\nu, \nu_1,\nu_2,\ldots \in \mathcal{P}(E)$ with $\wlim_{n\to\infty} \nu_n = \nu$. 
Then 
\begin{align}
\label{eq:hregularised1}
h(\mu \mid \nu) = \lim_{\varepsilon \downarrow 0} 
\limsup_{n \to \infty} \inf_{\mu' \in B_\varepsilon(\mu)} h(\mu' \mid \nu_n), 
\quad \mu \in \mathcal{P}(E). 
\end{align}
{\rm (2)} 
Let $Q=q^{\otimes \N}, Q_1=q_1^{\otimes \N},Q_2=q_2^{\otimes \N},\ldots \in 
\mathcal{P}^\mathrm{inv}(E^\N)$ be product measures with $\wlim_{n\to\infty} 
Q_n$ $= Q$. Then 
\begin{align}
\label{eq:Hregularised1}
H(R \mid Q) = \lim_{\varepsilon \downarrow 0} 
\limsup_{n \to \infty} \inf_{R' \in B_\varepsilon(R)} H(R' \mid Q_n), 
\quad R \in \mathcal{P}^\mathrm{inv}(E^\N). 
\end{align}
\end{lemma}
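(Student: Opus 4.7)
The plan will be to prove each direction separately. For the `$\leq$' bound in both parts I would invoke joint lower semi-continuity of (specific) relative entropy; for the `$\geq$' bound in part~(1) I plan to approximate the density $d\mu/d\nu$ by bounded continuous surrogates, and for part~(2) to then reduce to part~(1) via Lemma~\ref{lemma:HblockmeasQ}.

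For part~(1), joint lower semi-continuity of $h(\cdot \mid \cdot)$ on $\mathcal{P}(E)^2$ should follow from \eqref{eq:relentraslegendretransf}, since $(\mu',\nu') \mapsto \int f\,d\mu' - \log \int e^f\,d\nu'$ is jointly weakly continuous for each $f \in C_b(E)$. A diagonal extraction along $\varepsilon_k \downarrow 0$ and near-minimisers $\mu_k \in B_{\varepsilon_k}(\mu)$ that realise $h(\mu_k \mid \nu_{n_k})$ arbitrarily close to $\limsup_n \inf_{\mu' \in B_{\varepsilon_k}(\mu)} h(\mu' \mid \nu_n)$ will then yield $h(\mu \mid \nu) \leq \liminf_k h(\mu_k \mid \nu_{n_k})$, which is the `$\leq$' bound. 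For `$\geq$' I may assume $h(\mu \mid \nu) < \infty$ and set $f = d\mu/d\nu$; the idea is to truncate $f_m = (f \vee m^{-1}) \wedge m$ and then to replace it by a bounded continuous $\tilde{f}_m$ bounded away from zero (via Lusin's theorem on the Polish space $E$ together with a Tietze extension), arranged so that $\tilde{f}_m \to f$ in $L^1(\nu)$ and $\int \tilde{f}_m \log \tilde{f}_m\,d\nu \to h(\mu \mid \nu)$. With $\mu_n^{(m)} := \tilde{f}_m \nu_n / \int \tilde{f}_m\,d\nu_n$, continuity and boundedness of $\tilde{f}_m$ and of $\log \tilde{f}_m$ will give $\mu_n^{(m)} \to \mu^{(m)} := \tilde{f}_m \nu/\int \tilde{f}_m \,d\nu$ weakly and
\begin{equation*}
h(\mu_n^{(m)} \mid \nu_n) = \int \log \tilde{f}_m\,d\mu_n^{(m)} - \log \int \tilde{f}_m\,d\nu_n \;\longrightarrow\; h(\mu^{(m)} \mid \nu).
\end{equation*}
Choosing $m = m(\varepsilon,\delta)$ so that $\mu^{(m)} \in B_{\varepsilon/2}(\mu)$ and $h(\mu^{(m)} \mid \nu) \leq h(\mu \mid \nu) + \delta$, and then $n$ large, makes $\mu_n^{(m)}$ an admissible competitor in $B_\varepsilon(\mu)$, which establishes `$\geq$'.

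Part~(2) should then reduce to part~(1) with only minor additional work. Joint lower semi-continuity of $H$ is inherited through $H(R' \mid Q') = \sup_N \tfrac{1}{N} h(\pi_N R' \mid \pi_N Q')$ (valid for product $Q'$), as a supremum of jointly lower semi-continuous functions, and this handles `$\leq$' as in part~(1). For `$\geq$', given $\delta>0$ I would pick $M$ so large that $\block_M(\pi_M R) \in B_{\varepsilon/2}(R)$ (using \eqref{eq:reconstrfromblocks}); writing $r := \pi_M R$ and noting that $q_n^{\otimes M} \to q^{\otimes M}$ weakly, apply part~(1) on $\mathcal{P}(E^M)$ to obtain $r_n' \in \mathcal{P}(E^M)$ as close to $r$ as required with $\limsup_n h(r_n' \mid q_n^{\otimes M}) \leq h(r \mid q^{\otimes M}) + \delta$. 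Since $\block_M\colon \mathcal{P}(E^M) \to \mathcal{P}^\mathrm{inv}(E^\N)$ is weakly continuous, $R_n' := \block_M(r_n')$ will eventually lie in $B_\varepsilon(R)$, and Lemma~\ref{lemma:HblockmeasQ} yields
\begin{equation*}
H(R_n' \mid Q_n) = \tfrac{1}{M} h(r_n' \mid q_n^{\otimes M}) \leq \tfrac{1}{M} h(r \mid q^{\otimes M}) + \tfrac{\delta}{M} \leq H(R \mid Q) + \delta
\end{equation*}
for $n$ large; letting $\delta \downarrow 0$ closes the bound.

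The main technical obstacle will be the mollification step in part~(1): weak convergence of $\nu_n$ does not by itself deliver convergence of $\int g\,d(\tilde{f}_m \nu_n)$ unless $\tilde{f}_m$ is \emph{continuous}, so the density $f$ (a priori only measurable) must be replaced by a bounded continuous surrogate on the Polish space $E$ \emph{without destroying} the entropy convergence $\int \tilde{f}_m \log \tilde{f}_m \, d\nu \to h(\mu \mid \nu)$. Once this simultaneous control is secured, both parts will follow by fairly routine double-limit arguments.
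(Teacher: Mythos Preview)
Your proposal is correct and follows essentially the same route as the paper. Both directions in part~(1) match: the paper proves the ``$\leq$'' bound directly from the variational formula \eqref{eq:relentraslegendretransf} rather than via diagonal extraction, but this is only a repackaging of joint lower semi-continuity; for ``$\geq$'' the paper likewise replaces $d\mu/d\nu$ by a continuous bounded surrogate $\widetilde{\varphi}$ and transports it to $\nu_n$ via $\widetilde{\varphi}/\!\int \widetilde{\varphi}\,d\nu_n$. In part~(2) your reduction to part~(1) via $\block_M$ and Lemma~\ref{lemma:HblockmeasQ} is slightly cleaner than the paper's version, which essentially reproves part~(1) on $E^N$ by hand (constructing the continuous density $f_N$ directly), but the underlying idea is identical.
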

 
\begin{proof} 
(1) 
Denote the term in the right-hand side of (\ref{eq:hregularised1}) by $\tilde{h}(\mu)$. 
Let $f \in C_b(E)$, $\delta > 0$. We can find $\varepsilon_0 > 0$ and $n_0 \in \N$ such 
that 
\begin{align} 
\forall \, 0 < \varepsilon \leq \varepsilon_0, \,
\mu' \in B_\varepsilon(\mu)\colon\,\, 
&\int_E f\, d\mu' \geq \int_E f\, d\mu - \frac{\delta}{2}, \\
\forall \, n \geq n_0\colon\,\,
&\log \int_E e^f \, d\nu_n \leq \log \int_E e^f \, d\nu + \frac{\delta}{2}. 
\end{align}
Therefore, for $0 < \varepsilon \leq \varepsilon_0$ and $n \geq n_0$,
\begin{align} 
\inf_{\mu' \in B_\varepsilon(\mu)} h(\mu' \mid \nu_n) 
\geq \int_E f\, d\mu' - \log \int_E e^f \, d\nu_n 
\geq \int_E f\, d\mu \smallskip - \log \int_E e^f \, d\nu - \delta.
\end{align}
Now optimise over $f$ and take $\delta \downarrow 0$, to obtain $\tilde{h}(\mu) 
\geq h(\mu \mid \nu)$ via \eqref{eq:relentraslegendretransf}.

For the reverse inequality, we may without loss of generality assume that $h(\mu \mid \nu) 
= \int_E \varphi \log \varphi\, d\nu < \infty$, where $\varphi=d\mu/d\nu \geq 0$ is in 
$L^1(\nu)$. Then for any $\delta > 0$ we can find a $\widetilde{\varphi} \geq 0$ 
in $C_b(E) \cap L^1(\nu)$ such that $\int_E \widetilde{\varphi}\,d\nu = 1$ and 
\begin{align} 
\int_E \big| \widetilde{\varphi} - \varphi \big| \, d\nu < \delta, \;\; 
\int_E \big| \widetilde{\varphi}\log\widetilde{\varphi} - 
\varphi\log\varphi \big| \, d\nu < \delta.
\end{align}
Note that $\lim_{n\to\infty} \int_E \widetilde{\varphi}\,d\nu_n = 1$, and let 
$\widetilde{\varphi}_n = \widetilde{\varphi}/\int \widetilde{\varphi}\,d\nu_n$ 
and $\mu_n = \widetilde{\varphi}_n \nu_n$. Then, for $g \in C_b(E)$, 
\begin{equation}
\begin{aligned} 
\Big| \int_E g \, d\mu_n - \int_E g \,d\mu \Big| 
&= \Big| \frac1{\int_E \widetilde{\varphi} \, d\nu_n} 
\int_E g \widetilde{\varphi}\, d\nu_n - \int_E g\varphi \,d\nu \Big| \\
&\leq \Big| \frac{1}{\int_E \widetilde{\varphi}\, d\nu_n} - 1 \Big| 
\, \| g \widetilde{\varphi} \|_\infty 
+ \Big| \int_E g \widetilde{\varphi}\, d\nu_n - \int_E g \widetilde{\varphi}\, d\nu \Big| 
+ \Big| \int_E g (\widetilde{\varphi} - \varphi) \,d\nu\Big|,
\end{aligned}
\end{equation}
which can be made arbitrarily small by choosing $\delta$ small enough and $n$ large enough. 
In particular, for any $\varepsilon > 0$ we can choose $\delta$, $\widetilde{\varphi}$ and 
$n_0$ such that $\mu_n \in B_\varepsilon(\mu)$ for $n \geq n_0$. Hence 
\begin{equation}
\begin{aligned} 
\limsup_{n\to\infty} \inf_{\mu' \in B_\varepsilon(\mu)} h(\mu' \mid \nu_n) 
&\leq \limsup_{n\to\infty} h(\mu_n \mid \nu_n)\\ 
&= \limsup_{n\to\infty} \int_E \widetilde{\varphi}_n \log \widetilde{\varphi}_n \, d\nu_n 
= \int_E \widetilde{\varphi} \log \widetilde{\varphi} \, d\nu \leq h(\mu \mid \nu) + \delta, 
\end{aligned}
\end{equation}
and letting $\delta \downarrow 0$ we  $\tilde{h}(\mu) \leq h(\mu \mid \nu)$.

\medskip\noindent 
(2) Recall that for $R \in \mathcal{P}^\mathrm{inv}(E^\N)$ and $Q$ a product measure, 
\begin{equation}
\lim_{N\to\infty} \frac1N h\left( \pi_N R \mid \pi_N Q\right) 
= H(R \mid Q) = \sup_{N\in\N} \frac1N h\left( \pi_N R \mid \pi_N Q\right). 
\end{equation}
Denote the expression in the right-hand side of (\ref{eq:Hregularised1}) by $\tilde{H}(R)$. 
Fix $N\in\N$. Since for each $\varepsilon>0$, we have $B_{\varepsilon'}(R) \subset 
\{ R'\colon\, \pi_N R' \in B_{\varepsilon}(\pi_N R) \}$ for $\varepsilon'$ sufficiently 
small we also have 
\begin{align} 
\lim_{\varepsilon' \downarrow 0} \limsup_{n\to\infty} 
\inf_{R' \in B_{\varepsilon'}(R)} H(R' \mid Q_n) 
\ge \limsup_{n \to \infty} \inf_{\mu' \in B_\varepsilon(\pi_N R)} 
\frac1N h(\mu' \mid \pi_N Q_n). 
\end{align}
Let $\varepsilon \downarrow 0$ and use Part (1), to see that $\tilde{H}(R) \ge \frac1N 
h(\pi_N R \mid \pi_N Q)$ for any $N$. Hence also $\tilde{H}(R) \geq H(R \mid Q)$. 

For the reverse inequality, we may w.l.o.g.\ assume that $H(R \mid Q) < \infty$. 
Fix $\varepsilon>0$ and $\delta>0$. There is an $N \in \N$ such that 
$H(R \mid Q) \leq \frac1N h \big( \pi_N R \mid \pi_N Q \big) + \delta$, and since 
$\pi_N R \ll \pi_N Q=q^ {\otimes N}$ we can find a continuous, bounded and 
uniformly positive function $f_N\colon\,E^N \to [0,\infty)$ such that $\int_E f_N \, 
dq^{\otimes N} = 1$, $\int_E f_N \log f_N \, dq^{\otimes N} \leq h \big( \pi_N R \mid 
\pi_N Q \big) + N\delta$ and $\tilde{R}_N  \in B_{\varepsilon/2}(R)$, where $\tilde{R}_N 
= \block_N\big(f_N\,q^{\otimes N}\big) \in \mathcal{P}^\mathrm{inv}(E^\N)$ (see 
Lemma~\ref{lemma:HblockmeasQ}). By \eqref{eq:HblockmeasQ}, we have
\begin{equation} 
H(\tilde{R}_N \mid Q) 
= \frac1N \int_E f_N \log f_N \, dq^{\otimes N}.
\end{equation}
Now put $f_{N,n} = f_N/\int_E f_N\,q_n^{\otimes N}$, and define $\tilde{R}_{N,n} = \block_N
\big(f_{N,n} \, q^{\otimes N}\big)$ as the ``stationary version'' of $(f_{N,n}\,q_n^{\otimes 
N})^{\otimes \N}$. In particular, $H(\tilde{R}_{N,n} \mid Q_n) = \frac1N \int f_{N,n} \log 
f_{N,n} \, dq_n^{\otimes N}$. Since $f_N$ is continuous, we have $\tilde{R}_{N,n} \in 
B_{\varepsilon}(R)$ and $\int_E f_{N,n} \log f_{N,n}\,dq_n^{\otimes N} \le H(R \mid Q) 
+ 3\delta$ for $n$ large enough. Hence 
\begin{equation}
\limsup_{n\to\infty} 
\inf_{R' \in B_{\varepsilon}(R)} H(R' \mid Q_n) 
\le \limsup_{n\to\infty} H(\tilde{R}_{N,n} \mid Q_n) 
\le H(R \mid Q) + 4\delta.
\end{equation}
Now let $\delta \downarrow 0$ followed by $\lim_{\varepsilon\downarrow 0}$ to conclude
the proof. 
\end{proof}


\subsubsection{Existence of sharp coarse-graining approximations to 
the quenched rate function}

The following lemma was used in the proof of Lemma~\ref{lemma:Iquetrregularised}. 

\begin{lemma}
\label{lem:cg.2lev.blockapprox}
Let $Q \in \mathcal{P}^{\mathrm{fin}}(F^\N)$ with $H( Q \mid Q_{\rho,\WM}) < \infty$. There 
exist a sequence $(h_n)_{n\in\N}$ with $h_n>0$ and $\lim_{n\to\infty} h_n = 0$ and a sequence 
$(Q'_n)_{n\in\N}$ with $Q'_n \in \mathcal{P}^{\mathrm{fin}}(\widetilde{E}_{h_n}^\N)$ and 
$\wlim_{n\to\infty} Q'_n = Q$ such that $\limsup_{n\to\infty} I^{\mathrm{que}}_{h_n}(Q'_n) 
\leq I^{\mathrm{que}}(Q)$. The same holds with $F$ replaced by $F_{0,\tr}$ and 
$\widetilde{E}_{h_n}$ replaced by $\widetilde{E}_{h_n,\tr}$.
\end{lemma}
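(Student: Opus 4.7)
The plan is to set $h_n=2^{-n}$ and take $Q'_n$ to be the shifted-grid coarse-graining $\lceil Q\rceil_{h_n}$ constructed in Step~2 of the proof of Proposition~\ref{prop:Ique.tr.cont}(1): draw $(Y,U)$ from $Q\otimes\mathrm{Unif}[0,1]$, set $\tilde{T}_i=\lceil T_i\rceil_{h_n,U}$ for $i\in\N_0$, and read off the resulting words $(\tilde\tau_i,\tilde f_i)_{i\in\N}$ from the unchanged concatenated path $\kappa(Y)$. Via the isomorphism $\iota_{h_n}\colon F_{h_n}\to\widetilde{E}_{h_n}$ from \eqref{iotahdef} this gives $Q'_n\in\mathcal{P}^{\mathrm{inv,fin}}(\widetilde{E}_{h_n}^\N)$. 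Weak convergence $\wlim_n Q'_n=Q$ is straightforward: cut-points move by at most $h_n$ and the concatenation is only translated by $Uh_n$ in continuous time, so Lemma~\ref{obs:dSclose1} bounds the $d_F$-distance between the original and coarse-grained words in terms of $h_n$ and local oscillations of the Brownian path, which vanish in $L^1(Q)$ as $n\to\infty$.

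The key feature of this construction, already recorded in \eqref{Psihrel}, is $\Psi_{Q'_n,h_n}=\Psi_Q$, and $m_{Q'_n}$ equals $m_Q$ modulo the letter/continuous-time conversion factor. After matching conventions (as in Step~4 of the proof of Proposition~\ref{prop:Ique.tr.cont}(1)), one obtains the \emph{exact} identity
\begin{equation*}
(\alpha-1)\,m_{Q'_n}\,H(\Psi_{Q'_n,h_n}\mid\WM)=(\alpha-1)\,m_Q\,H(\Psi_Q\mid\WM)
\end{equation*}
for every $n$. Thus the ``concatenation'' contribution to $I^{\mathrm{que}}_{h_n}(Q'_n)$ matches its continuous counterpart on the nose; this is precisely the raison d'\^etre of using a uniformly shifted grid rather than a fixed deterministic grid.

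The main obstacle lies in showing that the relative-entropy contribution obeys $\limsup_n H(Q'_n\mid Q_{\lceil\rho\rceil_{h_n},\WM})\le H(Q\mid Q_{\rho,\WM})$. I would view the map $(Y,U)\mapsto\lceil Q\rceil_{h_n}$-sample as a deterministic measurable map $\chi_n\colon F^\N\times[0,1]\to F_{h_n}^\N$, and apply the same map to $Q_{\rho,\WM}\otimes\mathrm{Unif}[0,1]$ to obtain an auxiliary reference $\widetilde{Q}_{\rho,h_n}$. Monotonicity of specific relative entropy under measurable factor maps (immediate from \eqref{eq:SREwrtProd}) then yields
\begin{equation*}
H\bigl(\lceil Q\rceil_{h_n}\bigm|\widetilde Q_{\rho,h_n}\bigr)\le H\bigl(Q\otimes\mathrm{Unif}\bigm| Q_{\rho,\WM}\otimes\mathrm{Unif}\bigr)=H(Q\mid Q_{\rho,\WM}).
\end{equation*}
What remains is the comparison between $\widetilde Q_{\rho,h_n}$ and the true reference $Q_{\lceil\rho\rceil_{h_n},\WM}$: both yield Brownian concatenations conditional on word lengths, but the former has lengths correlated through the common shift $U$ while the latter has them i.i.d.\ under $\lceil\rho\rceil_{h_n}$. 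To close the gap I would pass through a block approximation $\block_{M_n}\!\bigl(\pi_{M_n}\lceil Q\rceil_{h_n}\bigr)$ with $M_n\to\infty$ slowly: Lemma~\ref{lemma:HblockmeasQ} reduces the specific-entropy comparison to a finite-block comparison between $\pi_{M_n}\widetilde Q_{\rho,h_n}$ and $(q_{\lceil\rho\rceil_{h_n},\WM})^{\otimes M_n}$, and the correlations introduced by $U$ affect only the first and last word in each block, contributing at most $O\bigl(\log(1/h_n)/M_n\bigr)=o(1)$ to the per-letter specific entropy provided $M_n\to\infty$ faster than $\log(1/h_n)$. A routine check using \eqref{eq:reconstrfromblocks} and the stability of $\Psi$ under block-ification (since $\Psi_{\block_M(r)}$ differs from $\Psi_{r^{\otimes\N}}$ only across block boundaries, contributing $O(1/M)$) shows that this block approximation also preserves the concatenation term up to $o(1)$.

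For the truncated version with $F_{0,\tr}$ in place of $F$ and $\widetilde E_{h_n,\tr}$ in place of $\widetilde E_{h_n}$, Step~3 of the proof of Proposition~\ref{prop:Ique.tr.cont}(1) establishes $[\lceil Q\rceil_{h_n}]_\tr=\lceil[Q]_\tr\rceil_{h_n}$ whenever $\tr\in h_n\N$ (so one restricts to $h_n=2^{-n}$ with $\tr\in\N$). The coarse-graining and truncation commute, and the same construction and arguments apply verbatim within the truncated word space, with all estimates now additionally benefitting from the bound on word lengths.
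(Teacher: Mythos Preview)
Your construction of $Q'_n=\lceil Q\rceil_{h_n}$ and your treatment of the $\Psi$-part (via \eqref{Psihrel}) and of the first monotonicity step $H(\lceil Q\rceil_{h_n}\mid\widetilde Q_{\rho,h_n})\le H(Q\mid Q_{\rho,\WM})$ are exactly what the paper does. The gap is in the last step, comparing $\widetilde Q_{\rho,h_n}=\lceil Q_{\rho,\WM}\rceil_{h_n}$ with $Q_{\lceil\rho\rceil_{h_n},\WM}$.

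Your diagnosis that ``the correlations introduced by $U$ affect only the first and last word in each block'' is incorrect: even the one-dimensional marginals of $\hat\tau_i$ under $\widetilde Q_{\rho,h_n}$ differ from $\lceil\rho\rceil_{h_n}$. A direct computation gives
\[
\widetilde Q_{\rho,h_n}(\hat\tau_1=k)
=\int_0^1\!\rho\big(((k-1+u)h_n,(k+u)h_n]\big)\,du,
\]
which in general does not equal $\lceil\rho\rceil_{h_n}(\{kh_n\})=\rho(((k-1)h_n,kh_n])$. The discrepancy is a \emph{bulk} effect present in every coordinate, not a boundary effect, so passing to blocks of size $M_n$ and dividing by $M_n$ does not make it vanish. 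Concretely, if $\bar\rho$ were allowed to oscillate wildly on scale $h_n$, the per-word log-ratio could stay bounded away from zero uniformly in $n$, and your $O(\log(1/h_n)/M_n)$ bound would fail.

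The paper closes this gap not by a block argument but by invoking the regularity assumptions \eqref{ass:rhobar.reg2}--\eqref{ass:rhobar.reg1}: it bounds, uniformly in $N$,
\[
\frac1N\,\E_{\lceil Q\rceil_h}\Bigl[\Bigl|\log\frac{d\pi_N\lceil Q_{\rho,\WM}\rceil_h}{d\pi_N Q_{\lceil\rho\rceil_h,\WM}}\Bigr|\Bigr]
\le \eta_n\,\lceil Q\rceil_h(\hat\tau_1\in\bar A_n)+\eta_0\,\lceil Q\rceil_h(\hat\tau_1\notin\bar A_n)=:r_Q(h),
\]
via the oscillation quantity $V_{\bar\rho}$ from \eqref{eq:Vbarrhodef}, and then uses $\eta_n\to 0$, $A_n\uparrow(s_*,\infty)$ to get $r_Q(h)\to 0$. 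This yields $H(\lceil Q\rceil_h\mid Q_{\lceil\rho\rceil_h,\WM})\le H(Q\mid Q_{\rho,\WM})+r_Q(h)$ directly. Your proposal does not use these regularity assumptions at all, and without them the comparison step cannot be completed.
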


\begin{proof} 
Recall the definition of $\lceil Q \rceil_h$ in Step\ 2 of the proof 
of part\ (1) of Proposition~\ref{prop:Ique.tr.cont} 
(see page~\pageref{prop:Ique.tr.cont.part1step2}). 
For any $N\in\N$, we have
\begin{align}
\label{eq:anyway1}
h(\pi_N \lceil Q \rceil_h \mid \pi_N \lceil Q_{\rho,\WM} \rceil_h) 
\leq h(\pi_N Q \mid \pi_N Q_{\rho,\WM}) \leq N \, H( Q \mid Q_{\rho,\WM}).
\end{align} 
The second inequality follows from \eqref{eq:SREwrtProd}. For the first inequality, use the fact that 
the construction of $\lceil Q \rceil_h$ can be implemented as a deterministic function of the pair of
random variables $(Y,U)$, together with the fact that relative entropy can only decrease when image 
measures are taken. Write $\hat{\tau}_i = (\tilde{T}_i-\tilde{T}_{i-1})/h$, $i \in \N$. Since letters both 
under $\lceil Q_{\rho,\WM} \rceil_h$ and under $Q_{\lceil \rho \rceil_h,\WM}$ are constructed from 
a Brownian path that is independent of the word lengths, we have (recall \ref{def:rho.h.trunc})
\begin{align}
\1(\hat{\tau}_1=k_1,\dots,\hat{\tau}_N=k_N) \,
\frac{d\pi_N \lceil Q_{\rho,\WM} \rceil_h}{d\pi_N Q_{\lceil \rho \rceil_h,\WM}} 
= \frac{(\pi_N \lceil Q_{\rho,\WM} \rceil_h)\big(\hat{\tau}_1=k_1,\dots,\hat{\tau}_N=k_N\big)}
{\prod_{\ell=1}^N \lceil \rho \rceil_h(h k_\ell)}
\end{align}
with 
\begin{align} 
& (\pi_N \lceil Q_{\rho,\WM} \rceil_h)\big(\hat{\tau}_1=k_1,\dots,\hat{\tau}_N=k_N\big) \notag \\
& = \int_{[0,1]} du \, 
\int_0^\infty \bar{\rho}(t_1) dt_1
\int_{t_1}^\infty \bar{\rho}(t_2-t_1) d(t_2-t_1) \cdots
\int_{t_{N-1}}^\infty \bar{\rho}((t_N-t_{N-1})) d(t_N-t_{N-1}) \notag\\
&\qquad\qquad \times \prod_{\ell=1}^N \1_{(h(\bar{k}_\ell-1+u), h(\bar{k}_\ell+u)]}(t_\ell),
\end{align}
where $\bar{k}_\ell = k_1+\cdots+k_\ell$. Thus, by (\ref{eq:Vbarrhodef}--\ref{ass:rhobar.reg1}), 
\begin{align} 
\label{eq:anyway2}
\sup_{N \in \N} \frac1N E_{\lceil Q \rceil_h}\left[ \Big| \log 
\frac{d\pi_N \lceil Q_{\rho,\WM} \rceil_h}{d\pi_N Q_{\lceil \rho \rceil_h,\WM}}\Big|\right] \leq r_Q(h)
\end{align}
with
\begin{align}
r_Q(h) = \eta_n \lceil Q \rceil_h(\hat\tau_1 \in \bar{A}_n) 
+ \eta_0 \lceil Q \rceil_h(\hat\tau_1 \not\in \bar{A}_n), \qquad h=2^{-n},
\end{align}
where $\bar{A}_n \subset (s_*,\infty)$ is the set obtained from $A_n$  by removing pieces of 
length $2^{-n}$ from its edges (i.e., $\bar{A}_n$ is the $2^{-n}$-interior of $A_n$). But 
$\lim_{n\to\infty} \lceil Q \rceil_{2^{-n}}(\hat\tau_1 \not\in \bar{A}_n)=0$ because $A_n$ fills up 
$(s_*,\infty)$ as $n\to\infty$. Since $\lim_{n\to\infty} \eta_n=0$, we get $\lim_{h \downarrow 0} 
r_Q(h) = 0$. Combining (\ref{eq:anyway1}--\ref{eq:anyway2}), we obtain that 
\begin{align} 
H(\lceil Q \rceil_h \mid Q_{\lceil \rho \rceil_h,\WM}) 
= \sup_{N \in \N} 
\frac1N h(\pi_N \lceil Q \rceil_h \mid \pi_N Q_{\lceil \rho \rceil_h,\WM}) 
\leq H( Q \mid Q_{\rho,\WM}) + r_Q(h)
\end{align}
and, finally, 
\begin{align} 
&\limsup_{h \downarrow 0} 
H(\lceil Q \rceil_h \mid Q_{\lceil \rho \rceil_h,\WM}) 
+ (\alpha-1) m_{\lceil Q \rceil_h} H(\Psi_{\lceil Q \rceil_h, h} \mid \WM) \notag\\
&\qquad \leq H( Q \mid Q_{\rho,\WM}) +  (\alpha-1) m_Q H(\Psi_Q \mid \WM).
\end{align}

\smallskip

The truncated case, where $F$ is replaced by $F_{0,\tr}$, etc., can be handled 
analogously.
\end{proof}


\subsubsection{Approximation of $\Psi_Q$}

The approximation in \eqref{eq:PsiQ} is stronger than just weak convergence.

\begin{lemma}
\label{lemma:PsiQ:TVlim}
For $Q \in \mathcal{P}^{\mathrm{inv,fin}}(F^\N)$, 
\begin{align} 
\label{eq:PsiQTVlimit}
\lim_{T\to\infty} \sup_{A \subset C[0,\infty)\; \text{measurable}} 
\bigg| \Psi_Q(A) - \frac{1}{T} 
\int_0^T \big(\kappa(Q) \circ (\theta^s)^{-1}\big)(A) \,ds\bigg| = 0,
\end{align}
i.e., the convergence in \eqref{eq:PsiQ} holds in total variation. 
\end{lemma}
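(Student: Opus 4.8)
The plan is to establish the total-variation convergence in \eqref{eq:PsiQTVlimit} by a renewal-theoretic argument that makes the defining formula \eqref{eq:PsiQcont} transparent. First I would unfold the meaning of $\kappa(Q)\circ(\theta^s)^{-1}$. Draw $Y=(Y^{(i)})_{i\in\N}$ from $Q$, with word lengths $\tau_i=\tau_i(Y)$ and partial sums $T_0=0$, $T_n=\tau_1+\cdots+\tau_n$. For a measurable $A\subset C([0,\infty))$ and $s\ge0$ we have $\bigl(\kappa(Q)\circ(\theta^s)^{-1}\bigr)(A)=\E_Q[\1_A(\theta^s\kappa(Y))]$, so the Cesàro average in \eqref{eq:PsiQTVlimit} equals $\frac1T\E_Q\bigl[\int_0^T\1_A(\theta^s\kappa(Y))\,ds\bigr]$. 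The idea is to cut the integral $\int_0^T$ along the renewal epochs $T_0<T_1<T_2<\cdots$: writing $N_T=\max\{n: T_n\le T\}$ for the number of completed words by time $T$, we get
\begin{equation}
\label{eq:plan.split}
\int_0^T\1_A(\theta^s\kappa(Y))\,ds
= \sum_{i=1}^{N_T}\int_{T_{i-1}}^{T_i}\1_A(\theta^s\kappa(Y))\,ds
+ \int_{T_{N_T}}^{T}\1_A(\theta^s\kappa(Y))\,ds.
\end{equation}
By the definition of $\kappa$ and of the shift $\theta^s$ on paths, $\theta^{T_{i-1}}\kappa(Y)$ is the concatenation of the words $Y^{(i)},Y^{(i+1)},\dots$, so $\int_{T_{i-1}}^{T_i}\1_A(\theta^s\kappa(Y))\,ds=\int_0^{\tau_i}\1_A\bigl(\theta^r\theta^{T_{i-1}}\kappa(Y)\bigr)\,dr$, whose $Q$-expectation — by shift-invariance of $Q$ — is exactly $m_Q\,\Psi_Q(A)$, independent of $i$. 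Hence the expectation of the main sum in \eqref{eq:plan.split} is $\E_Q[N_T]\,m_Q\,\Psi_Q(A)$.

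Next I would turn this into a uniform-in-$A$ estimate. Subtracting, for any $A$,
\begin{equation}
\label{eq:plan.err}
\Bigl|\,\Psi_Q(A)-\tfrac1T\,\bigl(\kappa(Q)\circ(\theta^s)^{-1}\bigr)\!\text{-average}(A)\Bigr|
\le \Bigl|1-\tfrac{m_Q\,\E_Q[N_T]}{T}\Bigr|
+ \tfrac1T\,\E_Q\bigl[(T-T_{N_T})\bigr],
\end{equation}
where the first term bounds the discrepancy between $\E_Q[N_T]m_Q\Psi_Q(A)$ and $T\Psi_Q(A)$ (using $0\le\Psi_Q(A)\le1$), and the second bounds the leftover boundary integral $\int_{T_{N_T}}^T\cdots$ (again using $\1_A\le1$); crucially both bounds are uniform in $A$. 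So the supremum over $A$ in \eqref{eq:PsiQTVlimit} is dominated by the right-hand side of \eqref{eq:plan.err}, and it remains to show each term tends to $0$ as $T\to\infty$. For the boundary term, $T-T_{N_T}$ is the forward recurrence time (age) of the renewal process at time $T$; since $Q\in\mathcal{P}^{\mathrm{inv,fin}}(F^\N)$ has $m_Q=\E_Q[\tau_1]<\infty$, a standard renewal argument (under the stationary measure $Q$, conditioning on $Y^{(1)}$, the epochs $0<T_1<T_2<\cdots$ behave like a delayed renewal process, and $\E_Q[\tau_1]<\infty$ forces $T-T_{N_T}=o(T)$ in $L^1$ — e.g.\ via $T-T_{N_T}\le \tau_{N_T+1}$ and $\frac1T\E_Q[\max_{i\le N_T+1}\tau_i]\to0$ since $\frac1n\max_{i\le n}\tau_i\to0$ a.s.\ and in $L^1$ for i.i.d.-dominated integrable sequences; under $Q$ the $\tau_i$ are stationary with finite mean and the same bound holds by the subadditive/maximal ergodic estimate). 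For the first term, the elementary renewal theorem gives $\E_Q[N_T]/T\to1/m_Q$; one may prove this directly from $T_{N_T}\le T< T_{N_{T}+1}$ together with $\frac1n T_n\to m_Q$ $Q$-a.s.\ (Birkhoff) and uniform integrability coming from $m_Q<\infty$, exactly as in the classical proof.

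The step I expect to be the only real (though still routine) obstacle is making the renewal estimate for the age $T-T_{N_T}$ and for $\E_Q[N_T]/T$ valid under the \emph{stationary} measure $Q$ rather than under an i.i.d.\ word sequence: $Q$ need not be a product measure, so I cannot quote Feller's renewal theorem verbatim. However, the needed facts only use that $(\tau_i)_{i\in\N}$ is a stationary sequence with finite mean under $Q$ (which holds since $Q$ is shift-invariant with $m_Q<\infty$), and both $\frac1nT_n\to m_Q$ and $\frac1n\max_{i\le n}\tau_i\to0$ hold $Q$-a.s.\ and in $L^1$ by Birkhoff's ergodic theorem together with the bound $\max_{i\le n}\tau_i\le (\sum_{i\le n}\tau_i^{}\wedge C)+\sum_{i\le n}(\tau_i-C)_+$ and dominated convergence in $C$; this yields \eqref{eq:plan.err}$\to0$ and completes the proof. (Alternatively, one can reduce to the ergodic case by the ergodic decomposition of $Q$ and note that $m_Q$, $\Psi_Q$ and the Cesàro averages are all affine in $Q$, so the bound for general $Q$ follows by integrating the bound for ergodic components — all constants in \eqref{eq:plan.err} being uniform in $A$, this causes no difficulty.)
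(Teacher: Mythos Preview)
Your overall strategy---cut $\int_0^T$ along the renewal epochs and control the boundary piece uniformly in $A$---is sound and close in spirit to the paper's argument, but there is a genuine gap at the step where you pass from ``each block has expectation $m_Q\Psi_Q(A)$'' to ``the expectation of the main sum in \eqref{eq:plan.split} is $\E_Q[N_T]\,m_Q\,\Psi_Q(A)$''. The number of terms $N_T$ is random and is correlated with the summands: the indicator $\1_{\{i\le N_T\}}=\1_{\{T_i\le T\}}$ depends on $\tau_1,\dots,\tau_i$, while $Z_i:=\int_{T_{i-1}}^{T_i}\1_A(\theta^s\kappa(Y))\,ds$ depends on $Y^{(i)},Y^{(i+1)},\dots$ (the shifted path looks into the future). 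These share dependence on $Y^{(i)}$ (and for non-product $Q$ on much more), so in general $\E_Q\bigl[\sum_{i=1}^{N_T}Z_i\bigr]\neq\E_Q[N_T]\,\E_Q[Z_1]$; this is not a Wald situation, even in the i.i.d.\ case, because the summands are not adapted to the filtration that makes $N_T$ a stopping time. Consequently your bound \eqref{eq:plan.err} is not established.

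The paper avoids this precisely by replacing the random $N_T$ with a \emph{deterministic} index $N(T)=\lceil T/m_Q\rceil$. Shift-invariance gives the exact identity $\Psi_Q(A)=\frac{1}{N m_Q}\,\E_Q\bigl[\int_0^{T_N}\1_A(\theta^s\kappa(Y))\,ds\bigr]$ for every fixed $N$, and then one compares $\frac{1}{N(T) m_Q}\int_0^{T_{N(T)}}$ with $\frac{1}{T}\int_0^T$, obtaining an error bounded by $\frac{1}{N(T) m_Q}\E_Q[|T_{N(T)}-T|]+\bigl|\frac{T}{N(T) m_Q}-1\bigr|$, uniformly in $A$. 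For ergodic $Q$ this tends to $0$ because $T_N/N\to m_Q$ in $L^1(Q)$; for general $Q$ one passes through the ergodic decomposition, exactly as you suggest in your final paragraph. So your proposal is easily repaired: keep your splitting idea but anchor it at the deterministic epoch $T_{N(T)}$ rather than at $T_{N_T}$, and then your remaining estimates (Birkhoff for $T_N/N$, ergodic decomposition for non-ergodic $Q$) carry you through.
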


\begin{proof} 
Note that, by shift-invariance, 
\begin{align} 
\Psi_Q(A) = \frac{1}{N m_Q} 
\E_Q \left[ \int_0^{\tau_N} \1_A\big( \theta^s \kappa(Y) \big) \, ds \right], \qquad N\in\N.
\end{align}
Suppose that $Q$ is also ergodic. Then $\lim_{N\to\infty} \tau_N/N = m_Q$ $Q$-a.s.\ 
and in $L^1(Q)$. Hence, for given $\varepsilon > 0$ we can find a $T_0(\varepsilon)$ 
such that, for $T \geq T_0(\varepsilon)$, 
\begin{align}
\label{eq:Qerg.cons1}
\E_Q\Big[ \Big| \frac{\tau_{N(T)}-T}{m_Q N(T)} \Big| \Big] 
+ \Big| \frac{T}{m_Q N(T)} - 1 \Big| \leq \varepsilon,
\end{align}
where $N(T) = \lceil T/m_Q \rceil$. Thus, for $T \geq T_0(\varepsilon)$ and any measurable 
$A \subset C[0,\infty)$, we have 
\begin{align} 
& \bigg| \Psi_Q(A) - \frac{1}{T} 
\int_0^T \big(\kappa(Q) \circ (\theta^s)^{-1}\big)(A) \,ds\bigg| \notag \\
& \leq \frac{1}{m_Q N(T)} \bigg| 
\E_Q\left[ \int_0^{\tau_{N(T)}} \1_A\big( \theta^s \kappa(Y) \big) \, ds 
- \int_0^{T} \1_A\big( \theta^s \kappa(Y) \big) \, ds \right] \bigg| \notag \\
& \qquad + \bigg| \Big(\frac{1}{m_Q N(T)}-\frac1T \Big) 
\int_0^{T} \1_A\big( \theta^s \kappa(Y) \big) \, ds \bigg| 
\leq \E_Q\left[ \Big| \frac{\tau_{N(T)}-T}{m_Q N(T)} \Big| \right] 
+ \Big| \frac{T}{m_Q N(T)} - 1 \Big| \leq \varepsilon,
\end{align}
i.e., \eqref{eq:PsiQTVlimit} holds. 

If $Q$ is not ergodic, then use the ergodic decomposition 
\begin{equation}
Q = \int_{\mathcal{P}^{\mathrm{erg,fin}}(F^\N)} Q' \, W_Q(dQ')
\end{equation}
and note that 
\begin{equation}
m_Q = \int_{\mathcal{P}^{\mathrm{erg,fin}}(F^\N)} m_{Q'} \, W_Q(dQ'),
\quad \Psi_Q = \int_{\mathcal{P}^{\mathrm{erg,fin}}(F^\N)} \frac{m_{Q'}}{m_Q}
\, \Psi_{Q'} \, W_Q(dQ')
\end{equation}
(see also \cite[Section 6]{BiGrdHo10}). 
We can choose $N(T)$ so large that the set of $Q'$s for which \eqref{eq:Qerg.cons1} 
holds (with $Q$ replaced by $Q'$) has $W_Q$-measure arbitrarily close to $1$.
\end{proof}


\subsection{Continuity of the ``letter part'' of the rate function under truncation: discrete-time}
\label{subs:towards.Ique.tr.cont}

In this section we consider a discrete-time scenario as in \cite{BiGrdHo10}: $\rho \in \mathcal{P}(\N)$, 
$E$ is a Polish space, $\nu \in \mathcal{P}(E)$, the sequence of words $(Y^{(i)})_{i\in\N}$ with 
discrete lengths has reference law $q_{\rho,\nu}^{\otimes\N}$ with $q_{\rho,\nu}$ as in 
\cite[Eq.\ (1.4)]{BiGrdHo10}. The following lemma extends \cite[Lemma~A.1]{BiGrdHo10} to 
Polish spaces (in \cite{BiGrdHo10} it was only proved and used for finite $E$, and without explicit 
control of the error term). Via coarse-graining, this lemma was used in the proof of 
Proposition~\ref{prop:Ique.tr.cont}.

\begin{lemma} 
\label{lem:trcontinuous}
Let $Q \in \mathcal{P}^{\mathrm{fin}}(\widetilde{E}^\N)$ 
and $0 < \varepsilon < \tfrac12$. 
Let $\tr \in \N$ be so large that 
\begin{align} 
\label{eq:mQtr.qb}
\E_Q\Big[ \big( |Y^{(1)}|-\tr \big)_+ \Big] < \frac{\varepsilon}{2} m_Q.
\end{align}
Then 
\begin{align} 
\label{eq:HPsiQtr.qb}
(1-\varepsilon) \big( H(\Psi_{[Q]_\tr} \mid \nu^{\otimes \N}) 
+ b(\varepsilon) \big) \leq H(\Psi_Q \mid \nu^{\otimes \N})
\end{align}
with $b(\varepsilon) = -2\varepsilon + [\varepsilon \log\varepsilon 
+ (1-\varepsilon) \log (1-\varepsilon)]/(1-\varepsilon) $, satisfying 
$\lim_{\varepsilon \downarrow 0} b(\varepsilon)=0$. In particular, 
\begin{align} 
\label{eq:HPsiQtrlim.disc}
\lim_{\tr\to\infty} H(\Psi_{[Q]_\tr} \mid \nu^{\otimes \N}) 
= H(\Psi_Q \mid \nu^{\otimes \N}). 
\end{align}
\end{lemma}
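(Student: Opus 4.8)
\textbf{Proof plan for Lemma~\ref{lem:trcontinuous}.}
The plan is to prove the key inequality \eqref{eq:HPsiQtr.qb} by a blocking-and-deletion argument at the level of the concatenated path, and then deduce \eqref{eq:HPsiQtrlim.disc} by combining it with lower semi-continuity. First I would reduce to the case where $Q$ is ergodic via the ergodic decomposition (as in the proof of Lemma~\ref{lemma:PsiQ:TVlim}), using that both $m_Q$ and $\Psi_Q$ are affine in $Q$ and that specific relative entropy is affine under the ergodic decomposition; the convexity inequality \eqref{eq:HPsiQtr.qb} then follows from the ergodic case by averaging. So assume $Q$ ergodic. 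The core idea is that the concatenated path $\kappa(Y)$ under $Q$ and the concatenated path $\kappa([Y]_\tr)$ under $[Q]_\tr$ agree except that, in the second one, the contributions of the over-long pieces (the parts of words beyond length $\tr$) have been excised. By the renewal/ergodic theorem, among the first $N$ words the total length is $\approx N m_Q$ and the excised total length is $\approx N\,\E_Q[(|Y^{(1)}|-\tr)_+] < \tfrac{\varepsilon}{2} N m_Q$ by \eqref{eq:mQtr.qb}; hence a fraction at most $\approx\varepsilon$ of the time-line of $\kappa(Y)$ is removed.

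Concretely, I would work with the finite-horizon relative entropies defining $H(\Psi_Q\mid\nu^{\otimes\N})$ (here $\nu^{\otimes\N}$ plays the role of $\WM$ in \eqref{eq:SREwrtWM}, via \eqref{eq:SREwrtProd} since in discrete time $\Psi_Q$ is a shift-invariant law on $E^\N$). Fix a large block length $L$, and express $H(\Psi_{[Q]_\tr}\mid\nu^{\otimes\N})$ through $\tfrac1L h(\pi_L \Psi_{[Q]_\tr}\mid\nu^{\otimes L})$ up to $o(1)$ in $L$. The letters seen by $\Psi_{[Q]_\tr}$ are exactly the letters seen by $\Psi_Q$ with a $(\le\varepsilon)$-fraction deleted; using the representation of $\Psi_Q$ as the stationary mean of $\kappa(Q)$ (eq.~\eqref{eq:PsiQcont}, discrete version) together with the definition of $\block$ and Lemma~\ref{lemma:HblockmeasQ}, I would bound $h$ of the truncated law in terms of $h$ of the untruncated one by a counting/deletion estimate: deleting a fraction $\varepsilon$ of coordinates from a shift-invariant measure changes the specific entropy by at most a controlled amount, giving the factor $(1-\varepsilon)$ and the additive correction $b(\varepsilon)$. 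The term $b(\varepsilon)=-2\varepsilon+[\varepsilon\log\varepsilon+(1-\varepsilon)\log(1-\varepsilon)]/(1-\varepsilon)$ is precisely the combinatorial overhead: the $\varepsilon\log\varepsilon+(1-\varepsilon)\log(1-\varepsilon)$ piece is the binomial entropy $\tfrac1N\log\binom{N}{\varepsilon N}$ accounting for the number of ways of choosing the deleted positions, and the $-2\varepsilon$ absorbs boundary effects from the stationarisation and from incomplete blocks. This mirrors the finite-alphabet computation in \cite[Lemma~A.1]{BiGrdHo10}; the point of the present version is that it goes through verbatim for a Polish letter space $E$ because the argument only uses the variational formula \eqref{eq:relentraslegendretransf} and the block identity \eqref{eq:HblockmeasQ}, never the finiteness of $E$.

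Finally, to get \eqref{eq:HPsiQtrlim.disc}: letting $\varepsilon\downarrow0$ in \eqref{eq:HPsiQtr.qb} (with $\tr=\tr(\varepsilon)\to\infty$ chosen so that \eqref{eq:mQtr.qb} holds, possible by dominated convergence since $m_Q<\infty$) yields $\limsup_{\tr\to\infty} H(\Psi_{[Q]_\tr}\mid\nu^{\otimes\N})\le H(\Psi_Q\mid\nu^{\otimes\N})$, because $\lim_{\varepsilon\downarrow0}b(\varepsilon)=0$. For the matching $\liminf$ I would use that $\wlim_{\tr\to\infty}\Psi_{[Q]_\tr}=\Psi_Q$ — which follows from $m_{[Q]_\tr}\to m_Q$ and dominated convergence in \eqref{eq:PsiQcont}, exactly as recorded in Step~1 of the proof of Proposition~\ref{prop:Ique.tr.cont}(1) — together with the lower semi-continuity of $\Psi\mapsto H(\Psi\mid\nu^{\otimes\N})$, giving $\liminf_{\tr\to\infty} H(\Psi_{[Q]_\tr}\mid\nu^{\otimes\N})\ge H(\Psi_Q\mid\nu^{\otimes\N})$. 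Combining the two bounds gives \eqref{eq:HPsiQtrlim.disc}.

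\textbf{Main obstacle.} I expect the hard part to be making the deletion estimate fully rigorous with the stated constant $b(\varepsilon)$ rather than some unspecified $o_\varepsilon(1)$: one must carefully match up the blocks of $\kappa(Y)$ and $\kappa([Y]_\tr)$ across word boundaries, handle the positions where a word is only partially truncated, and control the discrepancy between $\tau_N$ and $N m_Q$ (and between $T$ and $m_Q N(T)$) uniformly enough that the binomial-coefficient bookkeeping closes. The ergodicity reduction and the use of the renewal theorem to identify the deleted fraction as $\E_Q[(|Y^{(1)}|-\tr)_+]/m_Q$ are the tools that tame this; the Polish-space generality costs essentially nothing beyond replacing explicit densities by the variational formula \eqref{eq:relentraslegendretransf}.
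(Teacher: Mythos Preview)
Your overall architecture matches the paper's: reduce to ergodic $Q$, exploit that $\kappa([Y]_\tr)$ is obtained from $\kappa(Y)$ by deleting a fraction $\le\varepsilon$ of letters, account for the deletion combinatorially via a binomial coefficient (whence the shape of $b(\varepsilon)$), and close with lower semi-continuity for \eqref{eq:HPsiQtrlim.disc}. That part is right.

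The gap is in the technical tool you name for carrying the deletion estimate. You assert that ``the argument only uses the variational formula \eqref{eq:relentraslegendretransf} and the block identity \eqref{eq:HblockmeasQ}, never the finiteness of $E$,'' and hence goes through verbatim from \cite[Lemma~A.1]{BiGrdHo10}. This is not correct: the proof in \cite{BiGrdHo10} used covering numbers and their relation to specific entropy, which genuinely relies on a discrete alphabet. Neither \eqref{eq:relentraslegendretransf} nor Lemma~\ref{lemma:HblockmeasQ} gives you a combinatorial handle on how deleting $\varepsilon L$ coordinates changes the $\nu^{\otimes L}$-mass of a set; $\Psi_Q$ is not a block measure, so \eqref{eq:HblockmeasQ} does not apply to it, and the variational formula does not by itself translate a pointwise deletion map into an entropy inequality with the specific constant $b(\varepsilon)$.

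What the paper uses instead is a Shannon--McMillan--Breiman characterisation of specific relative entropy valid for ergodic $\Psi$ on a Polish alphabet:
\[
H(\Psi\mid\nu^{\otimes\N})=\lim_{L\to\infty}\,-\tfrac1L\log\Big(\inf\big\{\nu^{\otimes L}(B):\ B\subset E^L,\ \pi_L\Psi(B)\ge 1-\delta\big\}\Big),
\]
proved via the almost-sure convergence of $\tfrac1L\log\tfrac{d\pi_L\Psi}{d\nu^{\otimes L}}$. With this in hand, one builds a $\Psi_{[Q]_\tr}$-typical set $B'_L\subset E^L$, then sets $B_L=\pi_L\{\text{insert }\le\varepsilon L\text{ arbitrary letters into elements of }B'_L\}$; by construction $B_L$ is $\Psi_Q$-typical, and $\nu^{\otimes L}(B_L)\le[\varepsilon L]\binom{L}{[\varepsilon L]}\nu^{\otimes\lceil(1-\varepsilon)L\rceil}(\pi_{\lceil(1-\varepsilon)L\rceil}B'_L)$, which gives exactly the factor $(1-\varepsilon)$ and the additive $b(\varepsilon)$. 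So the missing idea in your plan is this SMB-based ``typical set'' replacement for covering numbers; once you have it, your deletion/insertion bookkeeping goes through as you describe.
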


\begin{proof} 
We can assume w.l.o.g.\ that $H(\Psi_Q \mid \nu^{\otimes\N}) < \infty$ 
for otherwise \eqref{eq:HPsiQtr.qb} is trivial and \eqref{eq:HPsiQtrlim.disc} 
follows from lower-semicontinuity of specific relative entropy.
\smallskip 

First, assume that $Q$ is ergodic, then $\Psi_Q$ is ergodic as 
well (see \cite[Remark~5]{Bi08}).
For $\Psi \in \mathcal{P}^{\mathrm{erg}}(E^\N)$ and $\delta \in (0,1)$, 
\begin{align} 
\label{eq:HPsinu.typset0}
H(\Psi \mid \nu^{\otimes \N}) 
& = \lim_{L\to\infty} -\frac{1}{L} \log \Big( \inf\big\{ \nu^{\otimes L}(B) 
\colon\, B \subset E^L, (\pi_L \Psi)(B) \geq 1-\delta \big\} \Big), \\
\label{eq:HPsinu.typset1}
& = \lim_{L\to\infty} \sup\Big\{ -\frac{1}{L} \log \nu^{\otimes L}(B) \, 
\colon\, B \subset E^L, (\pi_L \Psi)(B) \geq 1-\delta \Big\}.
\end{align}
This replaces the asymptotics of the covering number and its relation to specific entropy 
for ergodic measures on discrete shift spaces that was employed in the proof of 
\cite[Lemma A.1]{BiGrdHo10}, and can be deduced with bare hands from the 
Shannon-McMillan-Breiman theorem. Indeed, asymptotically optimal $B$'s are of the form 
$\{ \frac1L \log \frac{d\pi_L\Psi}{d\nu^{\otimes L}} \in H(\Psi \mid \nu^{\otimes \N}) \pm \epsilon\}$: 
Put $f_L = \frac{d\pi_L \Psi}{d\nu^{\otimes L}}$ and set $B_L = \{ \frac1L \log f_L > H(\Psi \mid 
\nu^{\otimes \N}) - \epsilon \}$. Then $(\pi_L \Psi)(B_L) \to 1$ by the Shannon-McMillan-Breiman, 
and $\nu^{\otimes L}(B_L) = \int_{B_L} \frac1{f_L} d\pi_L\Psi \leq \exp[-L(H(\Psi \mid \nu^{\otimes \N})
- \epsilon)]$, i.e., the right-hand side of \eqref{eq:HPsinu.typset1} is $\geq H(\Psi \mid \nu^{\otimes \N})$. 
For the reverse inequality, consider any $B \subset E^L$ with $(\pi_L\Psi)(B) \geq \tfrac12$, say. Set 
$B'=B \cap \{ \frac1L \log f_L < H(\Psi \mid \nu^{\otimes \N}) + \epsilon\}$. Then $\pi_L\Psi(B') \geq 
\tfrac13$ for $L$ large enough and $\nu^{\otimes L}(B) \geq \nu^{\otimes L}(B') \geq 
\exp[-L(H(\Psi \mid \nu^{\otimes \N}) + \epsilon)] \pi_L\Psi(B')$. Hence the right-hand side of 
\eqref{eq:HPsinu.typset1} is also $\leq H(\Psi \mid \nu^{\otimes \N})$.
        
To check \eqref{eq:HPsiQtr.qb}, fix $\varepsilon>0$. For $L$ sufficiently large, we construct a set 
$B_L \subset E^L$ such that $\pi_L \Psi_Q(B_L) \geq \tfrac12$ and $\nu^{\otimes L}(B_L) \leq 
\exp[ - L(1-\varepsilon)(b_L(\varepsilon) + H(\Psi_{[Q]_\tr} \mid \nu^{\otimes \N}))]$, 
i.e., 
\begin{align}
-\frac1L \log \nu^{\otimes L}(B_L) \geq 
(1-\varepsilon) \big[ H(\Psi_{[Q]_\tr} \mid \nu^{\otimes \N}) + b_L(\varepsilon) \big], 
\end{align}
where $\lim_{L\to\infty} b_L(\varepsilon) = b(\varepsilon)$. Via \eqref{eq:HPsinu.typset1} applied to 
$\Psi=\Psi_Q$, this yields \eqref{eq:HPsiQtr.qb}.

To construct the sets $B_L$, we proceed as follows. Put $N= \lceil (1+2\varepsilon) L/m_Q \rceil$. 
By the ergodicity of $Q$ (see \cite[Section 3.1]{BiGrdHo10} for analogous arguments), we can find 
a set $A \subset \widetilde{E}^N$ such that 
\begin{align} 
&\forall\, (y^{(1)},\dots,y^{(N)}) \in A \colon\, \notag \\
\label{eq:BL.prop.1}
& \hspace{2em} | \kappa(y^{(1)},\dots,y^{(N)}) | \geq L(1+\varepsilon), \;\; 
|y^{(1)}| \leq \tr, \;\; \sum_{i=1}^N (|y^{(i)}|-\tr)_+ < \varepsilon L, \\ 
\label{eq:BL.prop.2}
& \hspace{2em} \E_Q\Big[ |Y^{(1)}| \1_A(Y^{(1)},\dots,Y^{(N)}) \big] 
\geq (1-\varepsilon) m_Q, 
\end{align}
and the set 
\begin{align} 
B'_L = B'_L(A) &= \Big\{ \pi_L \big( \theta^i \kappa([y^{(1)}]_\tr,\dots,[y^{(N)}]_\tr)\big)\colon \,\notag\\
&\qquad (y^{(1)},\dots,y^{(N)}) \in A, i=0,1,\dots,|y^{(1)}|-1 \Big\} \subset E^L 
\end{align}
satisfies 
\begin{align} 
\pi_L\Psi_{[Q]_\tr}(B'_L) \geq \frac12, \quad 
\nu^{\otimes \lceil L(1-\varepsilon) \rceil}(\pi_{\lceil L(1-\varepsilon) \rceil} B'_L) 
\leq \exp\big[ -L(1-\varepsilon) 
\big(H(\Psi_{[Q]_\tr} \mid \nu^{\otimes \N})-2\varepsilon\big)\big].
\end{align}
Here, use \eqref{eq:mQtr.qb} in (\ref{eq:BL.prop.1}--\ref{eq:BL.prop.2}), and note that 
$N\big(1-\tfrac{\varepsilon}{2}\big)m_Q \sim (1+2\varepsilon)\big(1-\tfrac{\varepsilon}{2}\big)L 
\geq (1+\varepsilon) L$ and $N \tfrac{\varepsilon}{2} m_Q \sim (1+2\varepsilon)\tfrac{\varepsilon}{2} 
L < \varepsilon L$ as $L\to\infty$.

For $I \subset \{1,\dots,L\}$, $x \in E^L$ and $y \in E^{|I|}$, write $\mathsf{ins}_I(x; y) \in E^{L+|I|}$ 
for the word of length $L+|I|$ consisting of the letters from $y$ at index positions in $I$ and the letters 
from $x$ at index positions not in $I$, with the order of letters preserved within $x$ and within $y$ 
(the word $y$ is inserted in $x$ at the positions in $I$). Put 
\begin{align}
B_L = \pi_L\Big( \big\{ \mathsf{ins}_I(x; y) \colon \, 
x \in B'_L, I\subset \{1,\dots,L\}, |I| \leq \varepsilon L, y \in E^{|I|} \big\} \Big).
\end{align}
Then $\pi_L\Psi_Q(B_L) \geq \frac12$ by construction. Furthermore, for fixed $I\subset \{1,\dots,L\}$ 
with $|I|=k \le \varepsilon L$, 
\begin{align} 
\nu^{\otimes L} \Big( \pi_L\big( \big\{ \mathsf{ins}_I(x; y)\colon \, x \in B'_L, y \in E^k \big\} \big)\Big) 
= \nu^{\otimes L} \big( \pi_{L-k}(B'_L) \big) 
\leq \nu^{\otimes \lceil L(1-\varepsilon \rceil]}(\pi_{\lceil L(1-\varepsilon) \rceil} B'_L),
\end{align}
and hence 
\begin{align} 
\nu^{\otimes L}(B_L) & \leq [\varepsilon L] {L \choose [\varepsilon L]} 
\exp\big[ -L(1-\varepsilon) \big(H(\Psi_{[Q]_\tr} \mid \nu^{\otimes \N})-2\varepsilon\big)\big] \notag \\
& = \exp\big[ - L(1-\varepsilon) \big(b_L(\varepsilon) + H(\Psi_{[Q]_\tr} \mid \nu^{\otimes \N})\big) \big]
\end{align}
with $b_L(\varepsilon) = - \frac{1}{(1-\varepsilon) L}(\log [\varepsilon L] + \log {L \choose [\varepsilon L]}) 
- 2\varepsilon$, which satisfies $\lim_{\varepsilon \downarrow 0} b_L(\varepsilon)= b(\varepsilon)$.

It remains to prove \eqref{eq:HPsiQtrlim.disc}. Since $\wlim_{\tr\to\infty} \Psi_{[Q]_\tr} = \Psi_Q$, we 
have $\liminf_{\tr\to\infty} H(\Psi_{[Q]_\tr} \mid \nu^{\otimes \N}) \geq H(\Psi_Q \mid \nu^{\otimes \N})$, 
while the reverse inequality $ \limsup_{\tr\to\infty} H(\Psi_{[Q]_\tr} \mid \nu^{\otimes \N}) \leq 
H(\Psi_Q \mid \nu^{\otimes \N})$ follows from (\ref{eq:mQtr.qb}--\ref{eq:HPsiQtr.qb}) and the 
fact that $\lim_{\tr\to\infty} \E_Q[( |Y^{(1)}|-\tr )_+] = m_Q$ by dominated convergence.
\smallskip

For non-ergodic $Q$, decompose as in \cite[Eqs.(6.1)--(6.3)]{BiGrdHo10}, use the above argument 
on each of the ergodic components, and use the fact that specific relative entropy is affine.
\end{proof}


\section{Existence of specific relative entropy}
\label{contrelentr}

In this section we prove \eqref{eq:SREwrtWM}. For technical reasons, we consider the 
two-sided scenario. The argument is standard, but the fact that time is continuous requires
us to take care.  

\begin{proof}
Let $\Omega = \tilde{C}(\R)$ be the set of continuous functions $\omega\colon\, \R \to \R$ 
with $\omega(0)=0$, which is a Polish space e.g.\ via the metric $d(\omega, \omega') = 
\int_\R e^{-|t|} \big(|\omega(t)-\omega'(t)| \wedge 1\big) dt$. The shifts on $\Omega$ are 
$\theta^t \omega(\cdot) = \omega(\cdot+t)-\omega(t)$. A probability measure $\Psi$ on 
$\Omega$ has stationary increments when $\Psi = \Psi \circ (\theta^t)^{-1}$ for all $t \in \R$. 
For an interval $I \subset \R$ denote $\mathcal{F}_I = \sigma(\omega(t)-\omega(s)\colon\, 
s,t \in I)$. $\Psi_I$ denotes $\Psi$ restricted to $\mathcal{F}_I$. Write $\WM$ for the Wiener 
measure on $\Omega$, i.e., the law of a (two-sided) Brownian motion.

Let $\Psi \in \mathcal{P}(\Omega)$ with stationary increments be given and assume that 
$h(\Psi_{[0,T]} \mid \WM_{[0,T]}) < \infty$ for all $T>0$. To verify \eqref{eq:SREwrtWM}, 
we imitate well-known arguments from the discrete-time setup (see e.g.\ 
Ellis~\cite[Section IX.2]{El85}). 

For $I_1$, $I_2$ disjoint intervals in $\R$, denote by $\kappa^\Psi_{I_1, I_2}\colon\, 
\Omega \times \mathcal{F}_{I_2} \to [0,1]$ a regular version of the conditional law of (the 
increments of) $\Psi$ on $I_2$, given the increments in $I_1$, i.e., for fixed $\omega$, 
$\kappa^\Psi_{I_1, I_2}(\omega, \cdot)$ is a probability measure on $\mathcal{F}_{I_2}$, 
for fixed $A \in \mathcal{F}_{I_2}$, $\kappa^\Psi_{I_1, I_2}(\cdot, A)$ is an 
$\mathcal{F}_{I_1}$-measurable function, and $\kappa^\Psi_{I_1, I_2}(\omega, A)$ is a 
version of $\E_{\Psi}[\1_A | \mathcal{F}_{I_1}]$. When $I_1=\emptyset$, 
$\kappa^\Psi_{\emptyset, I_2}(\omega, A) = \Psi_{I_2}(A)$. Similarly, define 
$\kappa^\WM_{I_1, I_2}$ (which is simply $\kappa^\WM_{I_1, I_2}(\omega, A) = \WM_{I_2}(A)$ 
by the independence of the Brownian increments).

Put 
\begin{align}
a_{I_1,I_2} = \int_\Omega \Psi(d\omega_1)
\int_\Omega  \kappa^\Psi_{I_1, I_2}(\omega_1, d\omega_2) \,
\log\left[ \frac{d \kappa^\Psi_{I_1, I_2}(\omega_1, \cdot)}
{d \kappa^\WM_{I_1, I_2}(\omega_1, \cdot)}(\omega_2)\right],   
\end{align}
the expected relative entropy of the conditional distribution under $\Psi$ on $\mathcal{F}_{I_2}$ 
given $\mathcal{F}_{I_1}$ w.r.t.\ Wiener measure on $\mathcal{F}_{I_2}$). We have $a_{I_1,I_2} 
< \infty$ for bounded intervals, because of the assumption of finite relative entropy of $\Psi$ w.r.t.\ 
$\WM$ on compact time intervals. By stationarity, $a_{I_1,I_2}=a_{t+I_1,t+I_2}$ for any $t$.

Let $I_1' \subset I_1$, note that $\kappa^\Psi_{I_1, I_2}(\omega, \cdot) \ll \kappa^\Psi_{I_1', I_2}
(\omega, \cdot)$ for $\Psi$-a.e.\ $\omega$, and $\kappa^\WM_{I_1, I_2}(\omega, \cdot) = 
\kappa^\WM_{I_1', I_2}(\omega, \cdot) = \WM_{I_2}(\cdot)$. By the consistency property of 
conditional distributions, we have 
\begin{align} 
a_{I_1',I_2} = \int_\Omega \Psi(d\omega_1) \int_\Omega 
\kappa^\Psi_{I_1, I_2}(\omega_1, d\omega_2) 
\log \left[\frac{d \kappa^\Psi_{I_1', I_2}(\omega_1, \cdot)}
{d \kappa^\WM_{I_1', I_2}(\omega_1, \cdot)}(\omega_2)\right].
\end{align}
Indeed,
\begin{equation}
\int_\Omega \Psi(d\omega_1) \int_\Omega \kappa^\Psi_{I_1, I_2}(\omega_1, d\omega_2) 
f(\omega_1,\omega_2) 
= \int_\Omega \Psi(d\omega_1) \int_\Omega \kappa^\Psi_{I'_1, I_2}(\omega_1, d\omega_2) 
f(\omega_1,\omega_2)
\end{equation}
for any function $f(\omega_1,\omega_2)$ that is $\mathcal{F}_{I_1'} \otimes 
\mathcal{F}_{\R}$-measurable. Hence 
\begin{align} 
&a_{I_1,I_2} - a_{I_1',I_2} \\
& = \int_\Omega \Psi(d\omega_1) \int_\Omega 
\kappa^\Psi_{I_1, I_2}(\omega_1, d\omega_2)  
\bigg( \log \left[\frac{d \kappa^\Psi_{I_1, I_2}(\omega_1, \cdot)}
{d \kappa^\WM_{I_1, I_2}(\omega_1, \cdot)}(\omega_2)\right]  
- \log \left[\frac{d \kappa^\Psi_{I_1', I_2}(\omega_1, \cdot)}
{d \kappa^\WM_{I_1', I_2}(\omega_1, \cdot)}(\omega_2)\right] \bigg) \notag \\
\label{eq:hdeccont1}
& = \int_\Omega \Psi(d\omega_1) 
\int_\Omega \kappa^\Psi_{I_1, I_2}(\omega_1, d\omega_2) \, 
\log \left[\frac{d \kappa^\Psi_{I_1, I_2}(\omega_1, \cdot)}
{d \kappa^\Psi_{I_1', I_2}(\omega_1, \cdot)}(\omega_2)\right]  \geq 0
\end{align}
because the inner integral is $h( \kappa^\Psi_{I_1, I_2}(\omega_1, \cdot) \mid \kappa^\Psi_{I_1', I_2}
(\omega_1, \cdot)) \geq 0$. Choosing $I_1'=\emptyset$, \eqref{eq:hdeccont1}, we get $a_{I_1, I_2} 
\geq a_{\emptyset, I_2} = h( \Psi_{I_2}  \mid \WM_{I_2})$.

Observe 
\begin{align} 
\frac{d \Psi_{(0,s+t]}}{d \WM_{(0,s+t]}}(\omega) 
= \frac{d \Psi_{(0,t]}}{d \WM_{(0,t]}}(\omega) 
\, \frac{d \kappa^\Psi_{(0,t],(t,s+t]}(\omega, \cdot)}
{d \kappa^\WM_{(0,t],(t,s+t]}(\omega, \cdot)}(\omega) \quad \Psi_{(0,s+t]}-\text{a.s.},
\end{align}
take logarithms and integrate w.r.t.\ $\Psi$ (using consistency of conditional expectation 
on the right-hand side), to obtain 
\begin{align} 
h\big( \Psi_{(0,s+t]}  \mid \WM_{(0,s+t]} \big) = h\big( \Psi_{(0,t]}  \mid \WM_{(0,t]} \big) 
+ a_{(0,t], (t,s+t]}
\geq h\big( \Psi_{(0,t]}  \mid \WM_{(0,t]} \big) + h\big( \Psi_{(0,s]}  \mid \WM_{(0,s]} \big). 
\end{align}
Thus, the function $(0,\infty) \ni t \mapsto h( \Psi_{(0,t]}  \mid \WM_{(0,t]})$ is super-additive, 
and \eqref{eq:SREwrtWM} follows from Fekete's lemma. 
\end{proof}

Under $\kappa^\Psi_{(-\infty,0], (0,h]}$, the coordinate process will be a Brownian 
motion with a (possibly complicated) drift process $U_t = \int_0^t u_s\, ds$, where
$(u_t)_{t \geq 0}$ can be chosen adapted, and
\begin{equation}
\E_\Psi\big[ h(\kappa^\Psi_{(-\infty,0], (0,h]} \mid \WM_{(0,h]}) \big] = \E_\Psi\big[{\textstyle \int_0^h} u_s^2 \,ds \big]
\end{equation} 
(see F\"ollmer~\cite{Foe86}).


\end{document}